\DeclareSymbolFont{cyrletters}{OT2}{wncyr}{m}{n}
\DeclareMathSymbol{\berd}{\beta}{cyrletters}{"42}
\DeclareMathSymbol{\Zhe}{\beta}{cyrletters}{"11}
 \newtheorem{thm}{Theorem}[section]
 \newtheorem{cor}[thm]{Corollary}
 \newtheorem{lem}[thm]{Lemma}
 \newtheorem{prop}[thm]{Proposition}
 \theoremstyle{definition}
 \newtheorem{defn}[thm]{Definition}
 \theoremstyle{remark}
 \newtheorem{rem}[thm]{Remark}
\numberwithin{equation}{section}
\numberwithin{figure}{section}
\newcommand{\e}{\mathrm e}
\newcommand{\Rpar}{q}
\newcommand{\C}{{\mathbb C}}
\newcommand{\D}{{\mathbb D}}
\newcommand{\R}{{\mathbb R}}
\newcommand{\ordo}{\mathrm{o}}
\newcommand{\Ordo}{\mathrm{O}}
\newcommand{\re}{\operatorname{Re}}
\newcommand{\im}{\operatorname{Im}}
\newcommand{\Tope}{{\mathbf T}}
\newcommand{\Bop}{{\mathbf B}}
\newcommand{\Fspace}{{\delta\mathrm{Pol}}}
\newcommand{\diff}{{\mathrm d}}
\newcommand{\imag}{{\mathrm i}}
\newcommand{\pv}{\operatorname{\mathrm{pv}}}
\begin{document}
%---------------------------------------------------------------------------
%Insert here the title, affiliations and abstract:
%
\title[The Polyanalytic Ginibre Ensembles]
{The Polyanalytic Ginibre Ensembles}

%----------Author 1
\author[Haimi]
{Antti Haimi}

\address{Haimi: Department of Mathematics\\
The Royal Institute of Technology\\
S -- 100 44 Stockholm\\
SWEDEN}

\email{anttih@math.kth.se}

%\thanks{Research partially supported by the G\"oran Gustafsson Foundation and
%by the Swedish Science Council (Vetenskapsr\aa{}det)}

%----------Author 2

\author[Hedenmalm]
{Håkan Hedenmalm}

\address{Hedenmalm: Department of Mathematics\\
The Royal Institute of Technology\\
S -- 10044 Stockholm\\
SWEDEN}

\email{haakanh@kth.se}

\thanks{The second author is supported by the G\"oran Gustafsson Foundation 
(KVA) and Vetenskapsr\aa{}det (VR)}

%----------classification, keywords, date
%\subjclass{Primary 42B10, 42A10, 58F11; Secondary 11K50, 31B35, 43A15, 81Q05}

\keywords{Bargmann-Fock space, polyanalytic function, determinantal 
point process}

%\date{April 17, 2011}
%----------additions
%\dedicatory{To my parents}
%%% ----------------------------------------------------------------------

\begin{abstract} 
For integers $n,\Rpar=1,2,3,\ldots$, let $\mathrm{Pol}_{n,\Rpar}$ denote the 
$\C$-linear space of polynomials in $z$ and $\bar z$, of degree $\le n-1$ in 
$z$ and of degree $\le\Rpar-1$ in $\bar z$. We supply $\mathrm{Pol}_{n,\Rpar}$ 
with the inner product structure of
\[ 
L^2\left(\C,\e^{-m|z|^2} \diff A\right),\quad\text{where}\,\,\, 
\diff A(z)=\frac{\diff x \diff y}{\pi},\,\,\,z= x+ \imag y;
\]
the resulting Hilbert space is denoted by $\mathrm{Pol}_{m,n,\Rpar}$. Here,
it is assumed that $m$ is a positive real. We let $K_{m,n,\Rpar}$ denote the 
reproducing kernel of $\mathrm{Pol}_{m,n,\Rpar}$, and study the associated
determinantal process, in the limit as $m, n\to+\infty$ while $n=m+\Ordo(1)$; 
the number $\Rpar$, the degree of polyanalyticity, is kept fixed.
We call these processes polyanalytic Ginibre ensembles, because they 
generalize the Ginibre ensemble -- the eigenvalue process of random (normal) 
matrices with Gaussian weight. A possible interpretation is that we permit a
few higher Landau levels. We consider local blow-ups of the polyanalytic
Ginibre ensembles around points in the {\em spectral droplet}, which is here
the closed unit disk $\bar\D:=\{z\in\C:|z|\le1\}$. We obtain asymptotics for
the blow-up process, using a blow-up to characteristic distance $m^{-1/2}$;
the typical distance is the same both for interior and for boundary points
of $\bar\D$.  
This amounts to obtaining the asymptotical behavior of the generating kernel
$K_{m,n,q}$. Following \cite{ahm1}, the asymptotics of the $K_{m,n,q}$ is
rather conveniently expressed in terms of the Berezin measure (and density)
\[ 
\diff B^{\langle z\rangle}_{m,n,\Rpar}(w):=
\berd_{m,n,\Rpar}^{\langle z\rangle}(w)\diff A(w),\quad
\berd_{m,n,\Rpar}^{\langle z\rangle}(w)= 
\frac{|K_{m,n,\Rpar}(z,w)|^2 }{K_{m,n,\Rpar}(z,z)}\e^{-m|z|^2}. 
\]
For interior points $|z|<1$, we obtain that  
$
\diff B^{\langle z\rangle}_{m,n,\Rpar}(w)\to \diff\delta_z 
$
in the weak-star sense, where $\delta_z$ denotes the unit point mass at $z$. 
Moreover, if we blow up to the scale of $m^{-1/2}$ 
around $z$, we get convergence to a measure which is Gaussian for $\Rpar=1$, 
but exhibits more complicated Fresnel zone behavior for $\Rpar>1$. 
In contrast, for exterior points $|z|>1$, we have instead that 
$ 
\diff B^{\langle z\rangle}_{m,n,\Rpar}(w) \to \diff \omega(w,z, \mathbb{D}^e) 
$,
where $\diff \omega(w,z,\mathbb{D}^e)$ is the harmonic measure at $z$  
with respect to the exterior disk $\mathbb{D}^e:= \{w\in\C:\, |w|>1\}$. 
For boundary points, $|z|=1$, the Berezin measure 
$\diff B^{\langle z\rangle}_{m,n,\Rpar}$ 
converges to the unit point mass at $z$, like for interior points, but the 
blow-up to the scale $m^{-1/2}$ exhibits quite different behavior at boundary
points compared with interior points. The Fresnel-type pattern appears also 
for boundary points when $\Rpar>1$, but then it is not rotationally symmetric. 
\end{abstract}

%%% ----------------------------------------------------------------------
\maketitle
%%% ----------------------------------------------------------------------

%\addtolength{\oddsidemargin}{-1.3cm}
%\addtolength{\evensidemargin}{-1.3cm}
%\addtolength{\textwidth}{1.5cm}
\addtolength{\textheight}{2.2cm}
%\addtolength{\topmargin}{-1.0cm}
%\addtolength{\footskip}{1.5cm}

%\begin{document}

%%% Research support

%\subjclass{}

%\keywords{}

%%% Section numbering
%\setcounter{section}{-1}

%\setcounter{equation}{0}
%\setcounter{thm}{0}
%\setcounter{prop}{0}
%\setcounter{lemma}{0}
%\setcounter{cor}{0}
%\setcounter{remark}{0}

\section{Introduction}

\noindent{\bf Notation.} 
We will write use standard notation, such as $\partial X$ and $\mathrm{int}(X)$
for the boundary and the interior of a subset $X$ of the complex plane $\C$.
The complex conjugate of a complex number $z$ is usually written as 
$\bar z$. 
We write $\R$ for the real line, $\D:=\{z\in\C:|z|<1\}$ for the open 
unit disk, and $\D^e:=\{z\in\C:|z|>1\}$ for the open exterior (punctured) 
disk. The characteristic function of a set $E$ is written $1_E$. We write 
\[
\diff A(z)=\pi^{-1}\diff x \diff y,\quad\text{where}\,\,\,
z=x+\imag y\in\C,
\]
for the normalized area measure in $\C$, and use the standard Wirtinger 
derivatives
\[
\partial_z:=\tfrac12(\partial_x-\imag\partial_y),\,\,\,
\bar\partial_z:=\tfrac12(\partial_x+\imag\partial_y),
\quad\text{where}\,\,\, z=x+\imag y.
\] 
We also write $\Delta$ for the (quarter) Laplacian
\[
\Delta_z:=\partial_z\bar\partial_z=\tfrac14(\partial_x^2+\partial_y^2).
\]

\medskip
\noindent{\bf Determinantal projection processes.} 
Given a locally compact topological space $\mathrm{X}$ with a Radon measure 
$\mu$, a determinantal projection process (in the sequel just determinantal 
process) is a random configuration of $n$ points defined by the following 
probability measure on $\mathrm{X}^n$: 
\begin{equation}
\diff P(z_1,\ldots,z_n) = \frac{1}{n!} \det[K_n(z_i, z_j)]_{i,j=1}^n 
\diff \mu(z_1)\cdots\diff \mu(z_n). 
\label{eq-probdist}
\end{equation}
Here, $K_n$ is the integral kernel of a projection operator to an 
$n$-dimensional subspace of $L^2(\mathrm{X},\mu)$. It is customary to 
identify all the permutations of the points and think the process as a 
random measure $\sum_{j=1}^n \delta_{z_j}$ on $\mathrm{X}$.  

A general definition of a determinantal process was introduced by Macchi 
\cite{macchi}, who wanted to model fermions in quantum mechanics. Indeed, 
for any determinantal process, the probability density vanishes whenever any
two points in the $n$-tuple $(z_1,...,z_n)$ coincide (fermions are forbidden to
be in the same state). We interpret this as saying that the points in the 
$n$-tuple repel each other. Point processes of this kind appear in several 
contexts, e.g., in random matrix theory and combinatorics (for general 
surveys, s \cite{hkpv}, \cite{bor}; we should also mention the books
\cite{mehta}, \cite{deift}, \cite{deiftgioev}, \cite{AGZ}).  
\medskip

\noindent{\bf Eigenvalues of random normal matrix ensembles.}
Our main motivating example comes from the theory of random normal 
matrices.  This topic has in recent years been subject to rather active 
investigation by physicists as well as by mathematicians. 
For an introduction, see, e.g., \cite{zabro}. 
So, we shall use $\mathrm{X}=\mathbb{C}$ and 
$\diff\mu(z)= \e^{-mQ(z)}\diff A(z)$, for a positive weight function $Q$ 
satisfying some mild regularity and growth conditions; $m$ is a positive real 
parameter, and $\diff A(z)=\pi^{-1}\diff x \diff y$ is the normalized area 
measure. Let us write $L^2(\mathbb{C}, \e^{-mQ}):=L^2(\mathrm{X},\mu)$ in this
situation.
The determinantal projection process is associated with an $n$-dimensional
subspace of $L^2(\C, \e^{-mQ})$, and we will use the space $\mathrm{Pol}_{n}$ 
of all polynomials in $z$ of degree $\le n-1$; we write $\mathrm{Pol}_{m,n}$ 
to indicate that we have supplied $\mathrm{Pol}_{n}$ with the Hilbert space
structure of $L^2(\mathbb{C}, \e^{-mQ})$.
The density of the process is then given by the reproducing kernel  
$K_{m,n}$ of the space $\mathrm{Pol}_{m,n}$. So, we are talking about the
probability measure
\begin{equation}
\diff P(z_1,\ldots,z_n) = \frac{1}{n!} \det[K_{m,n}(z_i, z_j)]_{i,j=1}^n 
\e^{-m\{Q(z_1)+\cdots+Q(z_n)\}}\diff A(z_1)\cdots\diff A(z_n). 
\label{eq-probdist2}
\end{equation}
In terms of the {\em correlation kernel}
\begin{equation}
\Zhe_{m,n}(z,w):=K_{m,n}(z,w)\e^{-\frac12m\{Q(z)+Q(w)\}}, 
\label{eq-kernel-zhe}
\end{equation}
which is the an integral kernel of an orthogonal projection $L^2(\C)$, 
the expression \eqref{eq-probdist2} simplifies to
\begin{equation}
\diff P(z_1,\ldots,z_n) = \frac{1}{n!} \det\big[\Zhe_{m,n}(z_i, z_j)
\big]_{i,j=1}^n \diff A(z_1)\cdots\diff A(z_n). 
\label{eq-probdist2.5}
\end{equation}
The process described by \eqref{eq-probdist2} and \eqref{eq-probdist2.5}
represents the eigenvalues of a random normal matrix picked from the 
distribution 
\begin{equation}
\frac{1}{Z_{m,n}}\,\e^{-m\,\mathrm{tr}[Q(M)]} 
\diff\mathrm{vol}_{\mathrm{nm}(n)}(M),
\label{eq-ginQ}
\end{equation}
where 
$\diff\mathrm{vol}_{\mathrm{nm}(n)}(M)$ is the natural Riemannian volume 
form on the $n\times n$ normal matrices inherited from the metric of 
$\C^{n^2}$; $Z_{m,n}$ is the normalization constant needed to make the total 
mass equal to $1$. We are interested in the limiting behaviour of the 
process as $m,n\to+\infty$ while $n=m\tau+\Ordo(1)$ for some positive real 
number $\tau$. Without loss of generality, we will consider only $\tau=1$. 
\medskip

\noindent{\bf Local blow-up processes.}
Let $\mathcal{N}_+$ and $\mathcal{N}_{+,0}$ be the set of points defined by
\[
\mathcal{N}_+:=\big\{w\in\C:\,\Delta Q(w)>0\big\},\quad
\mathcal{N}_{+,0}:=\big\{w\in\C:\,\Delta Q(w)\ge0\big\}.
\]
In the arXiv preprint \cite{HedMak1}, which will appear later in the expanded 
form \cite{HedMak2}, the function $\widehat{Q}$ was defined as a certain 
envelope of $Q$, namely the largest subharmonic function in $\C$ which is 
$\le Q$ everywhere and has the growth bound  
\[
\widehat{Q}(z)=\log|z|^2+\Ordo(1),\quad\text{as}\,\,\,|z|\to+\infty.
\]
It is known that $\Delta \widehat{Q}=1_{\mathcal S}\Delta Q$ for some 
compact set $\mathcal S$ (see, e.g., \cite{HedMak2}). We assume that 
$\mathcal S$ is the minimal compact with this property, and call 
$\mathcal S$ a {\em spectral droplet}. We then have 
$\mathcal{S}\subset \mathcal{N}_{+,0}$.
The point process \eqref{eq-probdist2} has the following property: as 
$m,n\to+\infty$ while $n=m+\Ordo(1)$, the points will tend to accumulate on 
the set $\mathcal{S}$ with density $\Delta Q$ there. Moreover, the set
$\mathcal{S}\cap\mathcal{N}_+$ is rather regular for real-analytic $Q$, as
the Sakai theory applies (cf. \cite{HedShi}). Typically we then expect a
real-analytic boundary, with the exception of cusps and contact (or kissing) 
points. Let us refer to the set
$\mathrm{int}(\mathcal{S}\cap\mathcal{N}_{+})$ as {\em the bulk}. The results 
of \cite{ahm1}, \cite{ahm2} show that for bulk points
$z$, the local blow-up process at $z$, with coordinates $(\xi_1,\ldots\xi_n)$, 
\[
\xi_j:=m^{1/2}[\Delta Q(z)]^{1/2}(z_j-z),
\]
where $(z_1,\ldots,z_n)$ are from the process \eqref{eq-probdist2}, 
converges weakly to the translation invariant Ginibre$(\infty)$
process, as $m,n\to+\infty$ while $n=m+\ordo(1)$. 
The associated generating kernel is the reproducing kernel 
$(\xi,\eta)\mapsto\e^{\xi\bar\eta}$ of the 
Bargmann-Fock space. 
This has the flavor of a universality result.
The corresponding statement in the Gaussian unitary ensemble 
(GUE) case is the universality of the sine kernel for bulk. We observe here 
that the sine kernel is the reproducing kernel for the Paley-Wiener space 
(a subspace of $L^2(\R)$ consisting of entire functions). 
As for the two boundary points in the GUE model, the Tracy-Widom distribution 
appears, which is generated by the Airy kernel. The Airy kernel is 
reproducing for another Hilbert space of entire functions.   
This suggests that for real-analytic $Q$ and $z\in\partial\mathcal{S}\cap
\mathcal{N}_+$, there should exist a local blow-up 
\[
\xi_j:=m^\theta(z_j-z),
\]
where $\theta=\theta(z)$ is a suitable positive real, such that as 
$m,n\to+\infty$ while $n=m+\ordo(1)$, the process $(\xi_1,\ldots,\xi_n)$
would converge to a determinantal process whose generating kernel is the
reproducing kernel of a Hilbert space of entire functions.   
We verify this in the context of the Ginibre ensemble (i.e., with 
$Q(z)=|z|^2$), and identify the associated Hilbert space with a closed 
subspace of the Bargmann-Fock space characterized by slow growth in a 
half-plane. In that case, $\theta=\frac12$ as in the case of interior points. 
\medskip

\noindent{\bf The Berezin measure and the Berezin density.} 
In \cite{ahm1}, \cite{ahm2}, Ameur, Hedenmalm, and Makarov study 
the Berezin measure 
$\diff B_{m,n}^{\langle z\rangle}$ and Berezin density 
$\berd_{m,n}^{\langle z\rangle}$:
\[ 
\diff B_{m,n}^{\langle z\rangle}(w):=\berd_{m,n}^{\langle z\rangle}(w)
\diff A(w),\quad
 \berd_{m,n}^{\langle z\rangle}(w):= 
\frac{|K_{m,n}(z,w)|^2}{K_{m,n}(z,z)}\e^{-mQ(w)},
\]
which arise in the study of the Berezin transform. 
The Berezin measure is a probability measure, which makes it more stable than
the reproducing kernel $K_{m,n}$ of $\mathrm{Pol}_{m,n}$ itself as we let 
$m,n \to+\infty$. In terms of the point process \eqref{eq-probdist2}, 
$\berd_{m,n}^{\langle z\rangle}$ measures the amount of repulsion from $z$ 
caused by placing one of the points at $z$.  
For bulk points $z$, we have the convergence 
$\diff B^{\langle z\rangle}_{m,n}\to\diff\delta_z$ in the weak-star 
sense of measures, as $m,n\to+\infty$ while $n=m+\ordo(1)$. Here, 
$\diff\delta_z$ is the Dirac point mass at $z$. In fact, there is
a better result: the blow-up Berezin density 
\[
\hat{\berd}_{m,n}^{\langle z\rangle}(\xi)=\frac{1}{m \Delta Q(z)}\,
\berd^{\langle z\rangle}_{m,n}\bigg(z+ \frac{\xi}{\sqrt{m \Delta Q(z)}}\bigg) 
\]
converges to the standard Gaussian $\e^{-|\xi|^2}$. This corresponds to
the convergence of the local blow-up of the point process to the 
Ginibre$(\infty)$ process (cf. \cite{ahm1}, \cite{ahm2}).
On the other hand, for points $z$ outside the spectral droplet, i.e.,
for $z\in\C\setminus\mathcal{S}$, the Berezin 
measure $\diff B^{\langle z\rangle}_{m,n}$ converges in the weak-star sense 
of measures to harmonic measure at $z$ with respect to the exterior domain
$\C\setminus\mathcal{S}$ as $m,n\to+\infty$ while $n=m+\mathrm{o}(1)$
(see \cite{ahm1} for $Q=|z|^2$, and \cite{ahm2} for
the general result). 
\medskip

\noindent{\bf The local blow-up of the point process and the Berezin density.}
It is convenient to think of the point process \eqref{eq-probdist2} in terms
of the $k$-point intensities
\[
\det\big[\Zhe_{m,n}(z_i,z_j)\big]_{i,j=1}^{k}
=\det\big[K_{m,n}(z_i,z_j)
\e^{-\frac{1}{2}m[Q(z_i)+Q(z_j)]}\big]_{i,j=1}^{k}.
\]
We notice quickly that the intensities are unchanged if the kernel changed
to 
\[
K^\chi_{m,n}(z,w):=\chi(z)\bar\chi(w)K_{m,n}(z,w),
\]
provided that $\chi$ is measurable with $|\chi(z)|\equiv1$ 
(we can call this a ``guage transformation''). This can help in the 
asymptotical analysis of local blow-ups. For $k=2$, we get the $2$-point 
intensity
\begin{multline*}
\big\{K_{m,n}(z_1,z_1)K_{m,n}(z_2,z_2)-|K_{m,n}(z_1,z_2)|^2\big\}
\e^{-mQ(z_1)-mQ(z_2)}
\\
=K_{m,n}(z_1,z_1)\e^{-mQ(z_1)}\bigg\{K_{m,n}(z_2,z_2)\e^{-mQ(z_2)}
-\frac{|K_{m,n}(z_1,z_2)|^2}{K(z_1,z_1)}\e^{-mQ(z_2)}\bigg\},
\end{multline*}
where we recognize the Berezin density as a correction to the product of
the two $1$-point densities, where the $1$-point intensity is 
\[
K_{m,n}(z_1,z_1)\,\e^{-mQ(z_1)}.
\]
So, as far as the $2$-point intensity goes, we just need the $1$-point 
intensity and the Berezin density. Since the $1$-point intensity is just the
restriction to $z_1=z_2$ of the Berezin density, the Berezin density is all
we need to describe the $2$-point intensity. We will be a little lazy
and just work with the Berezin density in the context of local blow-ups, 
although the asymptotics of the $k$-point intensity would strictly speaking
require a little more work. So, although we state our many of our assertions 
regarding local blow-ups in terms of the Berezin density, we maintain that 
they generalize to statements about the point processes (cf. \cite{ahm2}).
\medskip

\noindent{\bf The Ginibre ensemble and its polyanalytic generalization.}
The case $Q(z)= |z|^2$ of \eqref{eq-ginQ} (or \eqref{eq-probdist2}
is known as the {\em Ginibre ensemble}. The (probability generating or 
reproducing) kernel is now particularly simple:  
\[
K_{m,n}(z,w)= m\sum_{j=0}^{n-1} \frac{(mz \bar{w})^j}{j!}. 
\] 
Here, $\mathcal{S}=\bar\D$, the closed unit disk. 
We will consider a family of generalizations of the Ginibre ensemble, 
the {\em polyanalytic Ginibre ensembles}, which are defined by the 
reproducing kernels $K_{m,n,\Rpar}$ 
of the subspaces 
\[ 
\mathrm{Pol}_{m,n,\Rpar} := \mathrm{span}\,
\big\{ z^j\overline{z}^k:\, 0\le j\le n-1,\,\,0\leq k \leq\Rpar-1\big\} 
\] 
supplied with the Hilbert space structure of $L^2(\C,\e^{-m|z|^2})$. 
The parameter value $\Rpar=1$ corresponds to the standard Ginibre process. 
In general, we now project to the $n\Rpar$-dimensional subspace of the
{\em polyanalytic polynomials}, where the degree in $\bar z$ is $\le\Rpar-1$, 
and the degree in $z$ is $\le n-1$. Note that the dimension of the subspace is
now $nq$ and not $n$. We can think of the case $q>1$  as permitting more Landau
levels than the lowest one (there are two similar models, see \cite{AIM}, 
\cite{Mou}, \cite{vas}; see also \cite{abreu}). 
We will keep the polyanalyticity degree $\Rpar$ fixed in the process, while 
we let both $m,n$ tend to infinity. We remark here that we have noticed that
Tomoyuki Shirai is interested in a similar model \cite{tshi}.

As for the point process, the points generally repel each other, but for
$\Rpar>1$, they also tend to avoid certain geometric configurations, such as
circles and lines. We have run a simulation in Figure \ref{fig-1}.
\medskip

\begin{figure}
\includegraphics[angle= 0,width=100mm]{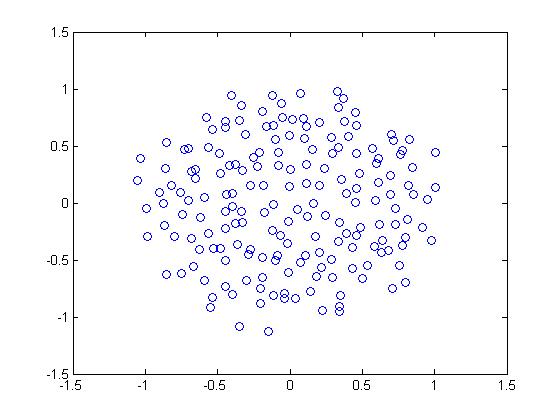} 
\caption{The polyanalytic Ginibre process with the kernel $K_{m,n,q}$ with 
$m=n=61$ and $q=3$.
The simulation is based on the algorithm by Hough, Krishnapur, Peres, and 
Vir\'{a}g \cite{hkpv}.}
\label{fig-1}
\end{figure}

\noindent{\bf Results.} Macroscopically, we find that the behavior of the 
polyanalytic Ginibre ensemble is similar to that of the Ginibre ensemble. 
If we measure this in terms of the Berezin measure, we have that as 
$m,n\to+\infty$ while $n=m+\Ordo(1)$, 
\[
\diff B_{m,n,\Rpar}^{\langle z\rangle}\to\diff\delta_z\,\,\,\text{for}\,\,\,
z\in\bar\D,\quad
\diff B_{m,n,\Rpar}^{\langle z\rangle}\to\diff\omega_z\,\,\,\text{for}\,\,\,
z\in\D^e,
\]
where $\omega_z$ is the harmonic measure of $z$ with respect to the exterior 
disk $\D^e$. Here, $K_{m,n,\Rpar}$ is the reproducing kernel for 
$\mathrm{Pol}_{m,n,\Rpar}$, and we use the notation
\[ 
\diff B_{m,n,\Rpar}^{\langle z\rangle}(w):=
\berd_{m,n\Rpar}^{\langle z\rangle}(w)\diff A(w),\quad
 \berd_{m,n,\Rpar}^{\langle z\rangle}(w):= 
\frac{|K_{m,n,\Rpar}(z,w)|^2}{K_{m,n,\Rpar}(z,z)}\e^{-mQ(w)}, 
\]
for the corresponding Berezin measure and Berezin density. Interestingly, 
the microscopic behavior of the Berezin measure 
$\diff B^{\langle z\rangle}_{m,n,\Rpar}$ is 
quite different for $\Rpar>1$ compared with the Ginibre case $\Rpar=1$. 
In terms of the blow-up Berezin density ($\Delta Q(z)=\Delta|z|^2\equiv1$ here)
\[
\hat{\berd}_{m,n,\Rpar}^{\langle z\rangle}(\xi)=m^{-1}\,
\berd^{\langle z\rangle}_{m,n,\Rpar}(z+m^{-1/2}\xi), 
\]
we have the following asymptotics as $m,n\to+\infty$ while $n=m+\Ordo(1)$:
\[
\hat{\berd}_{m,n,\Rpar}^{\langle z\rangle}(\xi)\longrightarrow\Rpar^{-1}
L^{1}_{\Rpar-1}(|\xi|^2)^2\e^{-|\xi|^2}\quad\text{for}\,\,\, z\in\D,
\]
where $L^{1}_{\Rpar-1}$ denotes the (generalized) Laguerre 
polynomial of degree $\Rpar-1$ with parameter $1$. It is well-known that 
the Laguerre polynomial $L^{1}_{\Rpar-1}$ has exactly $\Rpar-1$ strictly 
positive roots, which implies that the Berezin density will exhibit a typical 
{\em Fresnel-type ring pattern}. This resembles what happens in the 
one-dimensional GUE case, where the zero density points for the Berezin
density come from the zeros of the sine kernel. Actually, the analogy is more
than a superficial similarity. If we consider rather big values of $q$, 
and scale down to local distance $(mq)^{-1/2}$, with 
\[
\tilde{\berd}_{m,n,\Rpar}^{\langle z\rangle}(\xi')=(mq)^{-1}\,
\berd^{\langle z\rangle}_{m,n,\Rpar}\big(z+(mq)^{-1/2}\xi'\big), 
\]
then 
by the above we have, for $z\in\D$, 
\[
\hat{\berd}_{m,n,\Rpar}^{\langle z\rangle}(\xi)\longrightarrow\Rpar^{-2}
L^{1}_{\Rpar-1}(q^{-1}|\xi'|^2)^2\e^{-q^{-1}|\xi'|^2},
\]
as $m,n\to+\infty$ with $n=m+\Ordo(1)$. Next, if we let $q\to+\infty$, we
get that
\[
\Rpar^{-2}
L^{1}_{\Rpar-1}(q^{-1}|\xi'|^2)^2\e^{-q^{-1}|\xi'|^2}\longrightarrow
\bigg\{\sum_{i=0}^{+\infty}\frac{(-1)^i|\xi'|^{2i}}{i!(i+1)!}\bigg\}^2
=\frac{J_1(2|\xi'|)^2}{|\xi'|^2},
\]
where $J_1$ is the standard Bessel function. The identity
\[
\frac{J_1(2|\xi'|)}{|\xi'|}=\int_\D\e^{2\imag\re[\xi'\bar\zeta]}\diff A(\zeta)
\]
shows that we are dealing with a planar analogue of the sine kernel 
(the sine kernel is the Fourier transform of the characteristic function of
the interval $[-1,1]$, the one-dimensional unit ball).

We also investigate the local behaviour of the Berezin transform when 
$|z|=1$, i.e., when $z$ is on the boundary of the bulk. Using the same blow-up 
scale as with an interior point, we show that the blow-up Berezin density
$\hat{\berd}^{\langle z\rangle}_{m,n,\Rpar}(\xi)$ tends to a limit which can 
be expressed in terms of Hermite polynomials (see Theorem 
\ref{thm-5.10}). Here, too, there is a ring-like pattern in the interior 
direction, but it is not so pronounced as it is for interior points (the bulk).
We express the $1$-point intensity near a boundary point in terms of 
a sum of squares of Hermite polynomials. The Wigner semi-circle law 
then gives the asymptotic behavior of the $1$-point function, which tells us
the intensity of the process. We find that for big $q$, but much bigger
$m,n$ with $m=n+\Ordo(1)$, the $1$-point function is nontrivial in the annulus 
\[
1-q^{1/2}m^{-1/2}\le|z|\le1+q^{1/2}m^{-1/2};
\]
inside it is essentially constant, and outside it more or less vanishes.
Near the outward boundary of the annulus at the scale $(mq)^{-1/2}$,
we expect the statistics of the point process to be related with the 
well-known Airy process.
\medskip
 
\noindent{\bf Lifting to two complex variables.} Analogous results to 
\cite{ahm1} are obtained on complex manifolds by Berman \cite{berman1}. 
We note that the polyanalytic Ginibre processes can also be viewed as 
processes in $\mathbb{C}^2$ with the rather singular weight  
$\e^{-|z_1|^2}\delta_0(z_1-\bar z_2)$, where $\delta_0$ is unit point mass 
at $0$. Berman considers reproducing kernels of polynomial subspaces as the 
total degree of the polynomials tends to infinity. In contrast, here we 
discuss the case where one variable has bounded degree and the degree of 
the other variable goes to infinity. 
\medskip

\noindent{\bf The polyanalytic Ginibre ensemble and weighted interpolation.} 
%The polyanalytic Ginibre process is related to an interpolation problem. 
It is well known in the theory of random matrices that 
\[ 
\frac{1}{n!} \det[K_{m,n,1}(z_i, z_j)]_{i,j=1}^n 
= \frac{1}{Z_{m,n,1}} |\Delta(z_1,...,z_n)|^2, 
\]
where $\Delta(z_1,...,z_n)= \Pi_{i,j:i<j} (z_j-z_i)$ is the van der Monde  
determinant. 
A point configuration in a compact set which maximizes the van der Monde 
determinant is known to 
 be a good 
choice of nodes for Lagrange interpolation \cite{st}. Instead of considering 
points confined to a compact set, one can add a weight to prevent the points 
from going off to infinity. This leads to the same expression which arises 
in the context of random eigenvalues. 

We turn to the polyanalytic case. One shows that 
\[ 
\frac{1}{(n\Rpar)!} \det[K_{m,n,\Rpar}(z_i,z_j)]_{i,j=1}^{n\Rpar} 
= \frac{1}{Z_{m,n,\Rpar}}| \Delta_\Rpar(z_1,...,z_{n\Rpar})|^2, \]
where $Z_{m,n,\Rpar}$ is a normalization constant  
\[ 
Z_{m,n,\Rpar}:=
\int_{\C^{n\Rpar}}|\Delta_\Rpar(z_1,...,z_{n\Rpar})|^2
\e^{-m \sum_{j=1}^{n\Rpar}Q(z_j)}
\diff A(z_1) \dots \diff A(z_{n\Rpar})
\]
and 
\[ 
\Delta_\Rpar(z_1,...,z_{n\Rpar}) = 
\mathrm{det} \left( 
\begin{array}{ccc}
1 & \dots  & 1 
\\
z_1 & \dots & z_{n\Rpar} 
\\
\vdots & \dots & \vdots 
\\
z_1^{n-1} & \dots & z_{n\Rpar}^{n-1} 
\\
\bar{z}_1&\dots &  \bar{z}_{n\Rpar}
\\
\bar{z}_1z_1&\dots &  \bar{z}_{n\Rpar}z_{n\Rpar}
\\ 
\vdots & \dots & \vdots 
\\
\bar{z}_1^{\Rpar-1}&\dots &  \bar{z}_{n\Rpar}^{\Rpar-1}
\\
\vdots & \dots & \vdots 
\\
\bar{z}_1^{\Rpar-1}z_1^{n-1}&\dots &  \bar{z}_{n\Rpar}^{\Rpar-1}
z_{n\Rpar}^{n-1}
\\ 
\end{array} 
\right).
\] 
So, $\Delta_\Rpar(z_1,...,z_{n\Rpar})$ is the polyanalytic analogue 
of the van der Monde determinant. As in the case $\Rpar=1$ (which gives 
the usual van der Monde determinant), the expression 
$|\Delta_\Rpar(z_1,...,z_{n\Rpar})|^2$ measures how good the configuration 
is for Lagrange interpolation by polyanalytic polynomials. So, the 
polyanalytic Ginibre ensemble is a way to produce random Lagrange
interpolation sets, using the Gaussian weight for confinement.
\medskip
 
\noindent{\bf Further results and open problems.}
In \cite{rv} and \cite{ahm2}, the authors showed that the fluctuations of 
eigenvalues of random normal matrices tend to Gaussian free field.
The fluctuations of the polyanalytic Ginibre process will be discussed in 
a separate paper -- the limit is again the Gaussian free field, but the 
variance depends on the degree of polyanalyticity. 

One could naturally address all the questions discussed in this article with 
a more general weight $Q$. We conjecture that the spectral droplet will be the 
same as in the analytic case. It is also likely that the blow-up of the 
Berezin density at a bulk point $z$ will have a universal limit, which we 
here computed to be 
$\Rpar^{-1}L^{\langle1\rangle}_{\Rpar-1}(|\xi|^2)^2\e^{-|\xi|^2}$.   

\section{Polyanalytic Bargmann-Fock spaces}
\label{sec-polybargmann}

\noindent{\bf An orthogonal basis.} 
We will consider the Bargmann-Fock space $A^2_{m,\Rpar}(\C)$ of poly-analytic 
functions of degree $\le \Rpar-1$, i.e., functions of the form 
\[
f(z) = \sum_{r=0}^{\Rpar-1} \bar{z}^r f_r(z),
\]
where all the components $f_r$ are entire, subject to the norm integrability 
condition 
\[
\|f\|^2_{A^2_{m,\Rpar}(\C)}:=
\|f\|^2_{L^2(\C,\e^{-m|z|^2})}
=\int_{\C} |f(z)|^2 \e^{-m|z|^2} \diff A(z)<+\infty.
\] 
Here, as always, $m>0$. We note that $A^2_{1,1}(\C)$ is the standard 
Bargmann-Fock space. As before, let $\mathrm{Pol}_{m,n,\Rpar}$ be the 
closed subspace of $A^2_{m,\Rpar}(\C)$, defined by the condition that all the 
components $f_r$ are polynomials of degree less or equal to $n-1$.
Moreover, let $K_{m,\Rpar}$ and $K_{m,n,\Rpar}$ denote the reproducing kernels 
for $A^2_{m,\Rpar}(\C)$ and $\mathrm{Pol}_{m,n,\Rpar}$, respectively. 
We will be concerned with the asymptotic behaviour of the kernel 
$K_{m,n,\Rpar}$, as $m,n\to+\infty$ while $n=m+\Ordo(1)$. 

Bargmann-Fock-spaces of polyanalytic functions have been considered in, e.g.,
\cite{vas}, where the reproducing kernels and orthonormal bases were 
identified. We will attempt to supply a self-contained account of these 
basic matters.   

To begin with, we identify an orthonormal basis for the space 
$\mathrm{Pol}_{m,n,\Rpar}$. Here, and later as well, we will need some 
standard properties of the classical orthogonal polynomials. 
For details, we refer the reader to \cite{orthopo}. We need the 
generalized Laguerre polynomials ($(x)_j:=x(x+1)\cdots(x+j-1)$ is the 
Pochhammer symbol)
\[
L^\alpha_k(x):=\sum_{i=0}^{k}(-1)^i\binom{k+\alpha}{k-i}\frac{x^i}{i!}
=\sum_{i=0}^{k}(-1)^i\frac{(\alpha+i+1)_{k-i}}{i!(k-i)!}x^i.
\]

\begin{prop} \label{on-kanta}
For $\Rpar\le n$, the following functions form an orthonormal basis for 
$\mathrm{Pol}_{m,n,\Rpar}$ ($i,r,j,k$ are all integer parameters): 
\begin{align*} 
e_{i,r}^{1}(z)&:=\sqrt{\frac{r!}{(r+i)!}} \,m^{(i+1)/2}z^i\, L_r^{i}(m|z|^2), 
\quad 0\leq i\leq n-r-1,\,\,\,0\leq r\leq\Rpar-1, 
%\label{orthobasis-1} 
\\
e_{j,k}^{2}(z)&:=\sqrt{\frac{j!}{(j+k)!}} 
\,m^{(k+1)/2}\bar{z}^k\, L^{k}_{j}(m|z|^2),
\quad 0\leq j\leq\Rpar-k-1,\,\,\,1\leq k\leq\Rpar-1. 
%\label{orthobasis-2}
\end{align*}
%Here, $L^{\alpha}_j$ is the standard generalized Laguerre polynomial of 
%degree $j$ and parameter $\alpha$. 
\end{prop}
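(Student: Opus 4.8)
The plan is to verify three things: that each listed function actually lies in $\mathrm{Pol}_{m,n,\Rpar}$, that the total number of functions equals $\dim_\C\mathrm{Pol}_{m,n,\Rpar}=n\Rpar$, and that the whole family is orthonormal in $L^2(\C,\e^{-m|z|^2})$. Since an orthonormal set of the correct cardinality in a finite-dimensional inner-product space is automatically a basis, these three points together give the proposition.

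First I would settle membership and count, which is where the structure of the Laguerre factor is used: $L^\alpha_k(m|z|^2)$ is a polynomial of degree $k$ in $|z|^2=z\bar z$. Hence $z^iL^i_r(m|z|^2)$ has degree $\le r\le\Rpar-1$ in $\bar z$ and degree $i+r\le n-1$ in $z$, so $e^1_{i,r}\in\mathrm{Pol}_{m,n,\Rpar}$; likewise $\bar z^kL^k_j(m|z|^2)$ has degree $j+k\le\Rpar-1$ in $\bar z$ and degree $j\le\Rpar-1\le n-1$ in $z$, so $e^2_{j,k}\in\mathrm{Pol}_{m,n,\Rpar}$ (the hypothesis $\Rpar\le n$ enters exactly in this last inequality). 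For the count, family 1 contributes $\sum_{r=0}^{\Rpar-1}(n-r)$ functions and family 2 contributes $\sum_{k=1}^{\Rpar-1}(\Rpar-k)$ functions, and these add up to $n\Rpar=\dim_\C\mathrm{Pol}_{m,n,\Rpar}$.

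The analytic heart is the orthonormality, which I would compute in polar coordinates $z=\rho\e^{\imag\theta}$ together with the substitution $t=m\rho^2$. The point is that, once $L^\alpha_k(m|z|^2)$ is viewed as an angle-independent radial factor, the angular integral $\int_0^{2\pi}\e^{\imag d\theta}\diff\theta$ — where $d$ is the net power of $\e^{\imag\theta}$ in the integrand — vanishes unless $d=0$, and the surviving radial integral reduces to the classical weighted Laguerre orthogonality relation
\[
\int_0^{+\infty}t^\alpha L^\alpha_r(t)L^\alpha_s(t)\,\e^{-t}\diff t=\frac{(r+\alpha)!}{r!}\,\delta_{r,s},\qquad \alpha\in\Z_{\ge0}.
\]
For $\langle e^1_{i,r},e^1_{i',r'}\rangle$ the angular factor is $z^i\overline{z^{i'}}=\rho^{i+i'}\e^{\imag(i-i')\theta}$, so the pairing vanishes unless $i=i'$; when $i=i'$ the radial integral with $\alpha=i$ forces $r=r'$, and the prefactor $\sqrt{r!/(r+i)!}\,m^{(i+1)/2}$ is chosen precisely so that the diagonal value equals $1$. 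The computation for $\langle e^2_{j,k},e^2_{j',k'}\rangle$ is identical, with the angular factor $\bar z^k\,\overline{\bar z^{k'}}$ forcing $k=k'$ and the radial integral with $\alpha=k$ forcing $j=j'$.

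The only genuinely separate point is the mutual orthogonality of the two families, and this is where the restriction $k\ge1$ in family 2 does the work. In $\langle e^1_{i,r},e^2_{j,k}\rangle$ the conjugation turns $\overline{\bar z^kL^k_j(m|z|^2)}$ into $z^kL^k_j(m|z|^2)$, so the angular factor of the integrand is $z^{i+k}=\rho^{i+k}\e^{\imag(i+k)\theta}$; since $i\ge0$ and $k\ge1$ we have $i+k\ge1$, the $\theta$-integral vanishes identically, and the cross terms are $0$ irrespective of the radial factors. I do not expect a serious obstacle here: once the polar reduction and the Laguerre orthogonality relation are in hand, the remainder is bookkeeping, the one thing requiring care being to match the normalizing constants against the normalization of $\diff A$ so that every diagonal inner product comes out to exactly $1$. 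Combined with the membership check and the cardinality $n\Rpar$, the orthonormality then yields that the family is an orthonormal basis for $\mathrm{Pol}_{m,n,\Rpar}$.
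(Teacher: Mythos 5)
Your proposal is correct and follows essentially the same route as the paper's proof: membership and the count $n\Rpar$ against $\dim\mathrm{Pol}_{m,n,\Rpar}$, angular orthogonality in polar coordinates, and the weighted Laguerre orthogonality relation after the substitution $t=m\rho^2$. You are somewhat more explicit than the paper about the degree bookkeeping (where $\Rpar\le n$ enters) and the cardinality computation, but the argument is the same.
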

\begin{proof}
Clearly, all the above functions belong to the space 
$\mathrm{Pol}_{m,n,\Rpar}$. Also, all the functions $e^1_{i,r}$ are orthogonal
to each of the functions $e^2_{j,k}$, for the indicated ranges of the indices, 
as can be seen by integrating along circles using polar coordinates.
Next, we show that the functions $e^1_{i,r}$ form an orthonormal set.  
Any two functions there having different parameter $i$ are orthogonal, again
by integrating along circles using polar coordinates. 
So, we fix $i$, and pick two indices $r_1,r_2$. We compute the inner product 
of two such functions:
\begin{multline}
\int_\C e^1_{i,r_1}(z)\overline{e^1_{i,r_2}(z)}\e^{-m|z|^2} \diff A(z)
\\
=\sqrt{\frac{r_1!}{(r_1+i)!}} \sqrt{\frac{r_2!}{(r_2+i)!}}m^{i+1}
\int_{\C}
|z|^{2i}L_{r_1}^{i}(m|z|^2)L_{r_2}^{i}(m|z|^2) \e^{-m|z|^2} 
\diff A(z) \notag \\
=\sqrt{\frac{r_1!r_2!}{(r_1+i)!(r_2+i)!}} \int_{0}^{\infty} 
L_{r_1}^{i}(t) 
L_{r_2}^{i}(t)\,t^i\e^{-t} \diff t = \delta_{r_1,r_2},
\end{multline}
where the delta is in Kronecker's sense. In a similar fashion, 
the functions $e^2_{j,k}$ form an orthonormal set. 
So, the functions $e^1_{i,r},e^2_{j,k}$ together form an 
orthonormal set. Next, the dimension of the span equals the total number of
vectors, which we calculate to $n\Rpar$, which equals the known dimension 
of the space $\mathrm{Pol}_{m,n,\Rpar}$. The proof is complete. 
\end{proof}
\medskip

\noindent{\bf The reproducing kernel.}
For $\Rpar\leq n$, we conclude that the reproducing kernel of 
$\mathrm{Pol}_{m,n,\Rpar}$ equals 
\begin{multline}
\label{eq-Kmnq}
K_{m,n,\Rpar} (z,w) = m\sum_{r=0}^{\Rpar-1} 
\sum_{i=0}^{n-r-1} \frac{r!}{(r+i)!} 
(mz\bar{w})^i L_r^{i}(m|z|^2)L_r^{i}(m|w|^2) 
\\
+ m\sum_{j=0}^{\Rpar-2}\sum_{k=1}^{\Rpar-j-1} \frac{j!}{(k+j)!} 
(m\overline{z}w)^k L^{k}_{j}(m|z|^2)L^{k}_{j}(m|w|^2).
\end{multline}
We note that by plugging in $n=+\infty$ in Proposition \ref{on-kanta}
we get an orthonormal basis for the space $A^2_{m,\Rpar}(\C)$. 
It follows that the same procedure of plugging in $n=+\infty$ in the 
above expression for $K_{m,n,\Rpar}$ gives us $K_{m,\Rpar}$, the reproducing 
kernel for $A^2_{m,\Rpar}(\C)$.  What is probably less obvious is that 
$K_{m,\Rpar}$ may be written in a much simpler form (this representation is,
however, known; compare with \cite{AIM}, \cite{Mou}).

\begin{prop}
\label{prop-repkern}
We have that
\begin{multline*}
K_{m,\Rpar} (z,w) = m\sum_{r=0}^{\Rpar-1}\sum_{i=0}^{\infty} 
\frac{r!}{(r+i)!}(mz \bar{w})^i L_r^{i}(m|z|^2)L_r^{i}(m|w|^2) \notag 
\\
+m\sum_{j=0}^{\Rpar-2} \sum_{k=1}^{\Rpar-j-1} \frac{j!}{(j+k)!} 
(m\bar{z}w)^k L^{k}_{j}(m|z|^2)L^{k}_{j}(m|w|^2) \notag 
=m\,L^{1}_{\Rpar-1}(m|z-w|^2)\e^{m z \bar{w}}.\notag
\end{multline*}
\end{prop}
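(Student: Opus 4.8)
The plan is to \emph{not} resum the double series in \eqref{eq-Kmnq} head-on. Since that series is, by Proposition \ref{on-kanta}, just the orthonormal-basis expansion of the reproducing kernel $K_{m,\Rpar}$ of $A^2_{m,\Rpar}(\C)$, it suffices to compute $K_{m,\Rpar}$ in closed form by exploiting the translation symmetry of the Gaussian weight. For $a\in\C$ I would introduce the Weyl (magnetic translation) operator
\[
(W_af)(z):=f(z-a)\,\e^{m\bar a z-\frac12 m|a|^2}.
\]
The first step is to verify that $W_a$ is a unitary automorphism of $A^2_{m,\Rpar}(\C)$: the substitution $u=z-a$ shows it is an isometry of $L^2(\C,\e^{-m|z|^2})$, the factor $\e^{m\bar a z}$ is holomorphic so polyanalyticity of degree $\le\Rpar-1$ is preserved, and a short computation gives $W_{-a}W_a=\mathrm{I}$ exactly (the $|a|^2$ terms cancel), so $W_a$ is onto with inverse $W_{-a}$.

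Next I would use the elementary fact that the reproducing kernel equals $\sum_n\phi_n(z)\overline{\phi_n(w)}$ for \emph{any} orthonormal basis $\{\phi_n\}$. Applying this to $\{W_ae_n\}$, where $\{e_n\}$ is arbitrary, and collecting the exponential prefactors yields the covariance identity
\[
K_{m,\Rpar}(z,w)=\e^{m\bar a z+ma\bar w-m|a|^2}\,K_{m,\Rpar}(z-a,w-a),\qquad a\in\C.
\]
Choosing $a=w$ and using $mw\bar w-m|w|^2=0$ collapses this to
\[
K_{m,\Rpar}(z,w)=\e^{mz\bar w}\,K_{m,\Rpar}(z-w,0),
\]
so the whole problem reduces to the single-variable section $\zeta\mapsto K_{m,\Rpar}(\zeta,0)$.

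To evaluate that section I would expand $K_{m,\Rpar}(\zeta,0)$ in the orthonormal basis of Proposition \ref{on-kanta} (with $n=+\infty$). Every function $e^2_{j,k}$ carries a factor $\bar z^{k}$ with $k\ge1$, hence vanishes at the origin, and among the $e^1_{i,r}$ only those with $i=0$ survive; since $L_r^{0}(0)=L_r(0)=1$ one has $e^1_{0,r}(0)=m^{1/2}$ and $e^1_{0,r}(\zeta)=m^{1/2}L_r(m|\zeta|^2)$. Therefore
\[
K_{m,\Rpar}(\zeta,0)=\sum_{r=0}^{\Rpar-1}e^1_{0,r}(\zeta)\,\overline{e^1_{0,r}(0)}=m\sum_{r=0}^{\Rpar-1}L_r(m|\zeta|^2)=m\,L^{1}_{\Rpar-1}(m|\zeta|^2),
\]
where the last equality is the classical Laguerre summation $\sum_{r=0}^{\Rpar-1}L_r^{0}=L^{1}_{\Rpar-1}$ (immediate from $\sum_r L_r^\alpha(x)t^r=(1-t)^{-\alpha-1}\e^{-xt/(1-t)}$ after multiplying by $(1-t)^{-1}$). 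Substituting $\zeta=z-w$ then gives $K_{m,\Rpar}(z,w)=m\,L^{1}_{\Rpar-1}(m|z-w|^2)\e^{mz\bar w}$, the asserted formula.

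The genuine content here is the conceptual step of recognizing the translation covariance; once the correct Gaussian cocycle $\e^{m\bar a z-\frac12 m|a|^2}$ is in place, the remaining computations are routine. The points demanding care are verifying that $W_a$ maps $A^2_{m,\Rpar}(\C)$ \emph{onto} itself (not merely into a larger polyanalytic space) and pinning down the phases so the kernel cocycle is exactly $\e^{m\bar a z+ma\bar w-m|a|^2}$. If one instead insisted on a direct resummation of \eqref{eq-Kmnq}, the difficulty would migrate to the bilinear Laguerre sum $\sum_{i\ge0}\frac{r!}{(r+i)!}(mz\bar w)^i L_r^i(m|z|^2)L_r^i(m|w|^2)$ over the \emph{upper} index, and to showing that its non-exponential remainder is precisely cancelled by the finite $e^2$-contribution; the symmetry argument sidesteps this entirely.
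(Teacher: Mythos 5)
Your argument is correct and is essentially the paper's own proof: both reduce to the section $K_{m,\Rpar}(\cdot,0)$, which is evaluated from the orthonormal basis of Proposition \ref{on-kanta} via $L_r^0(0)=1$ and $\sum_{r=0}^{\Rpar-1}L_r^0=L^1_{\Rpar-1}$, and both then recover the general kernel by the same unitary Weyl translation on $A^2_{m,\Rpar}(\C)$ (your $W_a$ is the paper's $\Tope_{-a}$ up to notation). The only cosmetic difference is that you phrase the translation step as a kernel covariance identity obtained from basis-independence of $\sum_n\phi_n(z)\overline{\phi_n(w)}$, whereas the paper applies the adjoint $\Tope_w^*=\Tope_{-w}$ inside the reproducing identity; the content is identical.
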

%\begin{equation}
%K_{\Rpar, m}(z,w)= m\, L^{\langle1\rangle}_L(m|z-w|^2)\e^{m z \overline{w}}
%\end{equation}

\begin{proof}
It is clear from Proposition \ref{on-kanta} that the double sum expression
equals $K_{m,\Rpar}(z,w)$, so the only thing that needs attention is 
the last equality.
We first do the case $w=0$. Many of the terms in the double sum vanish,
so that we are left with
\begin{equation}
K_{m,\Rpar} (z,0)=m\sum_{r=0}^{\Rpar-1}L_r^{0}(m|z|^2)L_r^{0}(0),  
\notag
\end{equation}
It is well-known that $L_r^{0}(0)=1$ and that 
$\sum_{r=0}^{\Rpar-1} L^{\alpha}_r=L^{\alpha+1}_{\Rpar-1}$, so that 
the above reduces to
\begin{equation}
K_{m,\Rpar} (z,0) = m\, L^{1}_{\Rpar-1}(m|z|^2).  
\notag
\end{equation}
We turn to general $w \in\C$. It is well-known that the transformation
\[
\Tope_w[f](z):=\e^{-\frac12 m|w|^2-mz\bar w}f(z+w)
\]
acts unitarily on $A^2_{m,\Rpar}(\C)$; its adjoint is $\Tope_w^*=\Tope_{-w}$. 
By the reproducing property of $K_{m,\Rpar}(z,0)$, we have, for $f\in 
A^2_{m,\Rpar}(\C)$,
\begin{multline*}
\e^{-\frac12m|w|^2}f(w)=\Tope_w[f](0)=
\int_\C \Tope_w f(z)\overline{K_{m,\Rpar}(z,0)}\e^{-m|z|^2}\diff A(z)
\\
=\int_\C f(z)\overline{\Tope_w^*[K_{m,\Rpar}(\cdot,0)](z)}\,
\e^{-m|z|^2}\diff A(z)
\\
=m\e^{-\frac12m|w|^2}
\int_\C f(z)\,L^{1}_\Rpar(m|z-w|^2)\e^{m\bar z w}\e^{-m|z|^2}\diff A(z),
\end{multline*}
The claim that $K_{m,\Rpar}(z,w)= m\,L^{1}_{\Rpar-1}(m|z-w|^2) 
\e^{mz\bar{w}}$ now follows immediately.
\end{proof}
\medskip

\noindent{\bf Szeg\"o asymptotics.}
We shall show that the kernel $K_{m,n,\Rpar}(z,w)$ 
is approximated well by $K_{m,\Rpar}(z,w)$, provided that $z,w\in\D$ are 
rather close to one another. It will be instrumental to study the asymptotic 
behavior of the partial sums 
\[
E_{k}(\zeta)=\sum_{j=0}^{k}\frac{1}{j!} \zeta^j
\]
of the Taylor expansion of the exponential function. In \cite{szego}, 
Szeg\"o showed that
\begin{align} 
\label{szego1}
\frac{E_k(k\zeta)}{\e^{k\zeta}}= 1-\frac{1}{\sqrt{2 \pi k}}
(\zeta\e^{1-\zeta})^k \frac{\zeta}{1-\zeta}
(1+\epsilon^{1}_k(\zeta)),
\end{align}
provided that $|\zeta\e^{1-\zeta}|<1$ and $|\zeta|<1$, whereas
\begin{align} 
\label{szego2}
\frac{E_k(k\zeta)}{\e^{k\zeta}}= \frac{1}{\sqrt{2 \pi k}}(\zeta
\e^{1-\zeta})^k \frac{\zeta}{\zeta-1}
(1+\epsilon^{2}_k(\zeta)),
\end{align}
provided that $|\zeta\e^{1-\zeta}|<1$ and $|\zeta|>1$. Here, we have the 
convergence
\[
\lim_{k\to+\infty}\epsilon^{1}_k(z)=\lim_{k\to+\infty} \epsilon^{2}_k(z)=0
\]
uniformly on compact subsets of the respective domains. As for \eqref{szego2},
the uniform convergence $\epsilon^{2}_k(\zeta)\to0$  as $k\to+\infty$ holds 
also in certain unbounded subdomains; in particular, along the 
real line, we have uniform convergence on all intervals $[a,+\infty[$ with 
$a>1$.
\medskip

\noindent{\bf An elementary estimate of Laguerre polynomials.}
The following elementary estimate of generalized Laguerre polynomials will 
prove useful.
 
\begin{lem} 
\label{laguerreestimaatti}
%$(a)$ Fix a real number $t>1$ and a positive integer $r$. 
%Let $m\to+\infty$ and $n/m\to1$. Also, let $j \leq n-r$. 
%Then, there exist $C_1$ and $C_2$ such that 
%\begin{equation*}
%C_1(mx)^r \leq | L^{j}_r(mx)| \leq C_2 (mx)^r,  x > t.
%\end{equation*}
%
%$(b)$ Let $m\to+\infty$ and $n/m\to1$. 
%Also, let $j\leq n-r$. Then, there exists 
%$C_3$ such that  
%\begin{equation*}
%| L^{j}_r(mx)| \leq C_3 m^r, x \leq 1.
%\end{equation*}
Suppose $\alpha$ is a positive real. Then, for $k=0,1,2,\ldots$, we have that 
\[
|L^\alpha_k(x)|\le \frac{1}{k!}(x+\alpha+k)^k,\qquad x\in
[0,+\infty[.
\]
Moreover, with $\beta:=\alpha+2k-2+2\sqrt{\frac14+(k-1)(k+\alpha-1)}$,
we have that
\[
\frac{1}{k!}(x-\beta)^k\le (-1)^k L_k^{\alpha}(x)\le\frac{1}{k!}x^k,\qquad
x\in[\beta,+\infty[.
\]
Actually, the related inequality $|L_k^{\alpha}(x)|\le\frac{1}{k!}x^k$ holds 
for all $x\in[\frac12\beta,+\infty[$. 
\end{lem}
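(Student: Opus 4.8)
The plan is to treat the three inequalities separately, reducing the last two to a single bound on the largest zero of $L_k^\alpha$.

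For the first inequality I would argue straight from the coefficient expansion. Each Pochhammer factor obeys $(\alpha+i+1)_{k-i}=\prod_{l=0}^{k-i-1}(\alpha+i+1+l)\le(\alpha+k)^{k-i}$, since every one of the $k-i$ factors is at most $\alpha+k$. Hence, for $x\ge0$,
\[
|L_k^\alpha(x)|\le\sum_{i=0}^k\frac{(\alpha+k)^{k-i}}{i!\,(k-i)!}\,x^i
=\frac1{k!}\sum_{i=0}^k\binom ki x^i(\alpha+k)^{k-i}
=\frac1{k!}(x+\alpha+k)^k,
\]
the last step being the binomial theorem. For the remaining two inequalities the crucial input is the classical fact that, for $\alpha>-1$, the polynomial $L_k^\alpha$ has $k$ simple zeros, all in $(0,+\infty)$; writing them $0<x_1<\cdots<x_k$ and recalling that the leading coefficient is $(-1)^k/k!$, one gets the factorization
\[
(-1)^kL_k^\alpha(x)=\frac1{k!}\prod_{j=1}^k(x-x_j).
\]

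Granting the zero bound $x_k\le\beta$ (discussed below), both remaining inequalities become factor-by-factor estimates. Indeed, for $x\ge\beta\ge x_k$ every factor satisfies $0\le x-\beta\le x-x_j\le x$ (the right inequality from $x_j>0$, the middle from $x_j\le\beta$), so multiplying over $j$ yields $\frac1{k!}(x-\beta)^k\le(-1)^kL_k^\alpha(x)\le\frac1{k!}x^k$, which is the second assertion. For the third, when only $x\ge\tfrac12\beta$ is assumed, the bound $x_j\le\beta\le2x$ gives $|x-x_j|\le x$ for each $j$ (the constraint $x_j\ge0$ being automatic), whence $|L_k^\alpha(x)|=\frac1{k!}\prod_j|x-x_j|\le\frac1{k!}x^k$.

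Thus everything reduces to the single estimate $x_k\le\beta$ for the largest zero, and this is where the real work lies. I would realize the zeros of $L_k^\alpha$ as the eigenvalues of the $k\times k$ symmetric Jacobi matrix of the monic Laguerre recurrence, with diagonal entries $2n+\alpha+1$ ($0\le n\le k-1$) and off-diagonal entries $\sqrt{n(n+\alpha)}$ ($1\le n\le k-1$), so that $x_k$ is its largest eigenvalue. Since this matrix has nonnegative entries and is irreducible, $x_k$ is its Perron root, and by the Collatz--Wielandt characterization it would suffice to produce one strictly positive test vector $v$ with $(Jv)_i\le\beta\,v_i$ for every $i$; equivalently, one runs the three-term recurrence and checks by a continued-fraction positivity argument that $L_k^\alpha$ has no zero beyond $\beta$. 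The main obstacle is to choose the vector (equivalently the ratios in the continued fraction) sharply enough that the resulting threshold is exactly $\beta=\alpha+2k-2+2\sqrt{\tfrac14+(k-1)(k+\alpha-1)}$, which is precisely the larger root of $x^2-(2\alpha+4k-4)x+(\alpha^2-1)=0$, that is, the top eigenvalue of the natural $2\times2$ comparison formed from the leading diagonal block (and which collapses to the clean value $4k-2$ when $\alpha=1$). A naive forward induction on the recurrence will not do: since the zero grows like $4k$ while the gaps $\beta_{k}-\beta_{k-1}$ stay bounded, the threshold such an induction propagates overshoots $\beta$ once $k$ is large, so the estimate must be carried out globally rather than step by step.
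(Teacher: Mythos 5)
Your handling of the first inequality is correct and essentially the paper's: the paper uses the same Pochhammer bound $(\alpha+i+1)_{k-i}\le(\alpha+k)^{k-i}$, merely splitting the sum into even and odd indices to extract the marginally stronger bound $\frac{1}{2\,k!}\bigl((x+\alpha+k)^k+|\alpha+k-x|^k\bigr)$, whereas your direct triangle-inequality-plus-binomial-theorem argument already gives the stated claim. Likewise, your factor-by-factor deductions of the second and third inequalities from the factorization $(-1)^kL_k^\alpha(x)=\frac1{k!}\prod_j(x-x_j)$ are exactly the intended argument, \emph{granted} that all zeros lie in $]0,\beta[$.

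The gap is precisely that zero bound. You correctly isolate $x_k\le\beta$ as the crux, but you do not prove it: you set up the Jacobi-matrix/Perron-root framework and the Collatz--Wielandt criterion and then state that ``the main obstacle is to choose the vector \dots sharply enough that the resulting threshold is exactly $\beta$'' --- that is, the decisive step is left open, and you yourself observe that the naive inductive route fails. As written, the proposal therefore does not establish the second and third inequalities. The paper sidesteps this entirely by citing the classical inclusion of the zeros of $L_k^\alpha$ in $]0,\beta[$ (formula 18.16.13 of the NIST handbook \cite{orthopo}); the value of $\beta$ in the lemma is taken verbatim from that reference. To make your version self-contained you would have to actually exhibit a strictly positive test vector $v$ with $(Jv)_i\le\beta v_i$ for every $i$ (or carry out the equivalent continued-fraction positivity check), and verifying that this works with the sharp constant $\beta$ is nontrivial for exactly the reasons you note. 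Either supply that computation or, as the paper does, cite the known zero bound.
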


\begin{proof}
We begin with the estimate ($0\le i\le k$ is assumed)
\[
%(\alpha+1)^{k-i}\le
(\alpha+i+1)_{k-i}\le(\alpha+k)^j.
\]
Next, we note that for $x\ge0$ and $k=0,1,2,\ldots$, we have
\[
L^\alpha_k(x)=\sum_{i=0}^k(-1)^k\frac{(\alpha+i+1)_{k-i}}{i!(k-i)!}
x^i=\sum_{i\,\,\mathrm{even}}\frac{(\alpha+i+1)_{k-i}}{i!(k-i)!}
x^i-\sum_{i\,\,\mathrm{odd}}\frac{(\alpha+i+1)_{k-i}}{i!(k-i)!}
x^i,
\]
and in view of the above estimate,
\begin{equation*}
%\frac{1}{2\,k!}\big((\alpha+1+x)^k+(\alpha+1-x)^k\big)=
%\sum_{i\,\,\mathrm{even}}\frac{(\alpha+1)^{k-i}}{i!(k-i)!}
%\\ \le
\sum_{i\,\,\mathrm{even}}\frac{(\alpha+i+1)_{k-i}}{i!(k-i)!}
\le \sum_{i\,\,\mathrm{even}}\frac{(\alpha+k)^{k-i}}{i!(k-i)!}
=\frac{1}{2\,k!}\big((\alpha+k+x)^k+(\alpha+k-x)^k\big),
\end{equation*}
and, analogously,
\begin{equation*}
%\frac{1}{2\,k!}\big((\alpha+1+x)^k-(\alpha+1-x)^k\big)=
%\sum_{i\,\,\mathrm{odd}}\frac{(\alpha+1)^{k-i}}{i!(k-i)!}
%\\ \le
\sum_{i\,\,\mathrm{odd}}\frac{(\alpha+i+1)_{k-i}}{i!(k-i)!}
\le \sum_{i\,\,\mathrm{odd}}\frac{(\alpha+k)^{k-i}}{i!(k-i)!}
=\frac{1}{2\,k!}\big((\alpha+k+x)^k-(\alpha+k-x)^k\big).
\end{equation*}
By discarding alternatively the even or odd contribution, we arrive at
\[
|L^\alpha_k(x)|\le\frac{1}{2\,k!}\big((x+\alpha+k)^k+|\alpha+k-x|^k\big),
\]
which is slightly stronger than the first estimate.

It is well-known that $L_k^\alpha$ is a polynomial of degree $k$ all of
whose zeros are distinct and real, and they all fall in the interval 
$]0,\beta[$ (see 18.16.13 of \cite{orthopo}). The second estimate follows 
immediately from this. 
\end{proof}
\medskip

\noindent{\bf Approximation of the polynomial polyanalytic reproducing 
kernel.} We estimate the difference $K_{m,n,\Rpar}-K_{m,\Rpar}$.

\begin{prop} 
\label{kernelestimate}
Let $z,w\in\C$ be such that $|zw|\le\theta_1<1$ and 
$|zw|\e^{1-|zw|}\le\theta_2<1$. Let $M$ be a positive real number. 
Then, as $m,n\to+\infty$ while $|m-n|\le M$,
\begin{equation*} 
|K_{m,n,\Rpar}(z,w)-K_{m,\Rpar}(z,w)| \,\e^{-m|zw|}
\le C m^{\Rpar}\theta_2^m(1-\theta_1)^{-2}[1+|z|^{2\Rpar}+|w|^{2\Rpar}], 
\end{equation*}
where the constant $C$ depends on $\Rpar$ and $M$.
\end{prop}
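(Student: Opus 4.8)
The plan is to exploit the explicit series \eqref{eq-Kmnq} for $K_{m,n,\Rpar}$ together with the representation of $K_{m,\Rpar}$ obtained by letting $n=+\infty$. Since the second double sum in \eqref{eq-Kmnq} does not involve $n$, it cancels, and only the truncation of the first sum survives. First I would record the exact identity
\[
K_{m,\Rpar}(z,w)-K_{m,n,\Rpar}(z,w)=m\sum_{r=0}^{\Rpar-1}\sum_{i=n-r}^{+\infty}\frac{r!}{(r+i)!}(mz\bar w)^i L_r^i(m|z|^2)L_r^i(m|w|^2),
\]
so that the whole problem reduces to estimating, for each fixed $r\le\Rpar-1$, the tail over $i\ge n-r$.

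Next I would insert the first bound of Lemma \ref{laguerreestimaatti}, $|L_r^i(x)|\le\frac1{r!}(x+i+r)^r$, replacing the two Laguerre factors by $\frac1{(r!)^2}(m|z|^2+i+r)^r(m|w|^2+i+r)^r$. Writing $m|z|^2+i+r\le(1+|z|^2)(m+i+r)$ (and similarly for $w$) peels off the $z,w$-dependence as $(1+|z|^2)^r(1+|w|^2)^r$ and leaves $(m+i+r)^{2r}$. Reindexing by $l=i+r$ turns the remaining sum into $(m|zw|)^{-r}\sum_{l\ge n}\frac{(m|zw|)^l}{l!}(m+l)^{2r}$, a polynomially weighted tail of the exponential series with $x=m|zw|$. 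Since $n=m+\Ordo(1)$ and $|zw|\le\theta_1<1$, we have $x<n$ for $m$ large, and the summand ratio at $l\ge n$ is $\le\rho:=\frac{m}{n}(1+\frac1n)^{2r-1}|zw|$, which tends to $|zw|\le\theta_1$; bounding $(m+l)^{2r}\le C_\Rpar l^{2r}$ for $l\ge n$, the tail is at most $\frac{C_\Rpar}{1-\rho}\cdot\frac{x^n n^{2r}}{n!}$ by summing the geometric majorant. For $m,n$ large one has $\rho\le2\theta_1-\theta_1^2$ uniformly over the region, whence $\frac1{1-\rho}\le(1-\theta_1)^{-2}$, which is exactly where the square in the statement comes from.

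The delicate point is the interplay between the prefactor $(m|zw|)^{-r}$ produced by reindexing and the smallness of the leading term. Applying Stirling to $\frac{x^n}{n!}$ and multiplying back by $\e^{-m|zw|}$ produces $(m/n)^n\,|zw|^{n-r}\e^{\,n-m|zw|}$; naively estimating $(|zw|\e^{1-|zw|})^{n}\le\theta_2^n$ and keeping $(m|zw|)^{-r}$ separate would leave a spurious $|zw|^{-r}$ that blows up as $|zw|\to0$. The fix is to keep $(m|zw|)^{-r}(m|zw|)^{n}=(m|zw|)^{n-r}$ together, so that the exponential factor becomes $(|zw|\e^{1-|zw|})^{n-r}\e^{\,r(1-|zw|)}\le\e^{\Rpar}\theta_2^{\,n-r}\le C_{M,\Rpar}\theta_2^{\,m}$, using $|zw|\e^{1-|zw|}\le\theta_2<1$ and $|n-r-m|\le M+\Rpar$. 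The leftover powers of $m$ assemble to $m^{r}$ (from $n^{2r}m^{-r}\approx m^{r}$), which together with the front $m$ gives $m^{\Rpar}$ once summed over $r$.

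Finally I would collect the $z,w$-factors: since $|z||w|=|zw|\le1$, any cross term $|z|^{2a}|w|^{2b}$ with $a,b\le r\le\Rpar-1$ satisfies $|z|^{2a}|w|^{2b}\le\max(|z|,|w|)^{2|a-b|}\le|z|^{2|a-b|}+|w|^{2|a-b|}\le 2\,[1+|z|^{2\Rpar}+|w|^{2\Rpar}]$, so $(1+|z|^2)^r(1+|w|^2)^r\le C_\Rpar[1+|z|^{2\Rpar}+|w|^{2\Rpar}]$. Assembling the pieces and summing the $\Rpar$ terms yields the stated bound, with a constant depending only on $\Rpar$ and $M$ (and the fixed $\theta_1,\theta_2$). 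The main obstacle is precisely the bookkeeping of the previous paragraph, namely matching $|zw|^{-r}$ against the exponential smallness so that the residual $z,w$-dependence is the bounded polynomial $[1+|z|^{2\Rpar}+|w|^{2\Rpar}]$ rather than something singular as $|zw|\to0$; the exponential tail itself is routine once the Laguerre polynomials are controlled by Lemma \ref{laguerreestimaatti}.
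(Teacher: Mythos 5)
Your proposal is correct, and it shares the paper's skeleton up to the midpoint: the same exact tail identity $K_{m,\Rpar}-K_{m,n,\Rpar}=m\sum_{r}\sum_{i\ge n-r}(\cdots)$ and the same input from Lemma \ref{laguerreestimaatti}. Where you genuinely diverge is in how the weighted exponential tail is estimated. The paper splits $(m|z|^2+i+r)^r(m|w|^2+i+r)^r$ binomially, absorbs the resulting powers $(n+j)^{2r}$ into shifted factorials via $(n+j)^{2r}/(n+j)!\le 2/(n+j-2r)!$, and so reduces everything to pure exponential tails $R_k(m|zw|)$ at truncation levels $k=n-1,\,n-r-1,\,n-2r-1$, which are then handled by Szeg\"o's expansion \eqref{szego1}/\eqref{szego3}. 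You instead factor out $(1+|z|^2)^r(1+|w|^2)^r$ at the outset and bound the polynomially weighted tail $\sum_{l\ge n}\frac{x^l}{l!}(m+l)^{2r}$ by its leading term times a geometric series whose ratio tends to $|zw|\le\theta_1$ --- which is exactly where the factor $(1-\theta_1)^{-2}$ originates --- finishing with Stirling rather than Szeg\"o. Both routes work; yours is more elementary and self-contained, sidesteps the uniformity caveats attached to Szeg\"o's formula near the edge of its domain of validity, and your insistence on recombining $(m|zw|)^{-r}$ with $(m|zw|)^{n}$ before invoking $|zw|\e^{1-|zw|}\le\theta_2$ is precisely the right bookkeeping (the paper's terms $|zw|^rR_{n-1}$ and $|zw|^rR_{n-2r-1}$ achieve the same cancellation implicitly, while the cross terms carry the $|z|^{2r}+|w|^{2r}$). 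The small price you pay is the extra observation that $(1+|z|^2)^r(1+|w|^2)^r\le C_\Rpar[1+|z|^{2\Rpar}+|w|^{2\Rpar}]$ when $|zw|\le 1$, whereas the paper's binomial split produces $|z|^{2r}+|w|^{2r}$ directly. One caveat you honestly flag: converting $\theta_2^{\,n-r}$ into $\theta_2^{\,m}$ costs a factor $\theta_2^{-(M+\Rpar)}$, so the constant retains a dependence on $\theta_2$ not acknowledged in the statement; the paper's own proof has the same feature and is silent about it.
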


\begin{proof}
In view of \eqref{eq-Kmnq} and Proposition \ref{prop-repkern}, we have that
for $\Rpar\le n$,
\[
K_{m,\Rpar}(z,w)-K_{m,n,\Rpar}(z,w)=m\sum_{r=0}^{\Rpar-1}\sum_{i=n-r}^{+\infty}
\frac{r!}{(r+i)!}(mz \bar{w})^i L_r^{i}(m|z|^2)L_r^{i}(m|w|^2). 
\]
By Lemma \ref{laguerreestimaatti}, we get that
\begin{multline}
\label{eq-interm1}
\bigg| \sum^{+\infty}_{i=n-r} 
\frac{r!}{(r+i)!}(mz \bar{w})^i L_r^{i}(m|z|^2)L_r^{i}(m|w|^2)\bigg| 
\\
\le \sum^{+\infty}_{i=n-r}\frac{1}{r!(r+i)!}(m|zw|)^i(m|z|^2+i+r)^r
(m|w|^2+i+r)^r 
\\  
=\frac1{r!}\sum^{+\infty}_{j=0}\frac{1}{(n+j)!}(m|zw|)^{n-r+j}
(m|z|^2+n+j)^r(m|w|^2+n+j)^r
\\
\le 
\frac{4^{r-1}}{r!}\sum^{+\infty}_{j=0}\frac{1}{(n+j)!}(m|zw|)^{n-r+j}
[(m|z|^2)^r+(n+j)^r][(m|w|^2)^r+(n+j)^r].
\end{multline}
For $r$ confined to $0\le r\le\Rpar-1$ and for big $n$, say $n\ge n_0(\Rpar)$,
we have
\[
\frac{(n+j)^{2r}}{(n+j)!}\le \frac{2}{(n+j-2r)!}\quad\text{and}\quad
\frac{(n+j)^{r}}{(n+j)!}\le \frac{2}{(n+j-r)!},
\]
so that if we use the notation 
\[
R_k(z)=\sum_{j=k+1}^{+\infty}\frac1{j!}z^j=\e^z-E_k(z),
\]
we obtain from \eqref{eq-interm1} that for $0\le r\le\Rpar-1$ and
$n\ge n_0(\Rpar)$,
\begin{multline}
\label{eq-interm2}
\bigg| \sum^{+\infty}_{i=n-r} 
\frac{r!}{(r+i)!}(mz \bar{w})^i L_r^{i}(m|z|^2)L_r^{i}(m|w|^2)\bigg| 
\\
\le
\frac{4^{r-1}m^r}{r!}\Big[
|zw|^r R_{n-1}(m|zw|)+
2(|z|^{2r}+|w|^{2r}) R_{n-r-1}(m|zw|)
+2|zw|^r R_{n-2r-1}(m|zw|)\Big].
\end{multline}
Next, we see from Szeg\"o's asymptotical expansion \eqref{szego1} that
for large $k,l$ with $l=k+\Ordo(1)$, we have
\begin{align} 
\label{szego3}
\frac{R_k(l\zeta)}{\e^{l\zeta}}= (2 \pi k)^{-1/2}(l/k)^{k}
\frac{\zeta}{k/l-\zeta}(z\e^{1-l\zeta/k})^k(1+\epsilon^{1}_k(l\zeta/k)),
\end{align}
as $k,l\to+\infty$ and $l=k+\Ordo(1)$. 
By a careful application of \eqref{szego3} to \eqref{eq-interm2}, and summing
over $0\le r\le \Rpar-1$, the assertion of the proposition follows. 
\end{proof}

\begin{cor} 
\label{cor-kapprox}
For $z,w\in\D$, let $1-|zw|\ge\tau>0$. Then, as 
$m,n\to+\infty$, while $|m-n|\le M$,
\begin{equation*} 
K_{m,n,\Rpar}(z,w)=K_{m,\Rpar}(z,w)+\Ordo(\e^{-\frac12m\tau^2}
\e^{m|zw|}), 
\end{equation*}
where the ``$\Ordo$'' constant depends on $\tau$, $\Rpar$, and $M$. 
\end{cor}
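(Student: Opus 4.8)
The plan is to deduce the corollary directly from Proposition \ref{kernelestimate} by making concrete choices of the free parameters $\theta_1$ and $\theta_2$ dictated by the single hypothesis $1-|zw|\ge\tau$. Since $z,w\in\D$ forces $|zw|<1$, the assumption only has content for $0<\tau\le1$, and I would treat the degenerate boundary case $\tau=1$ (which forces $zw=0$) separately and trivially, so that we may assume $0<\tau<1$. Then $|zw|\le1-\tau$, and I would set $\theta_1:=1-\tau$, which satisfies the requirement $|zw|\le\theta_1<1$ and produces the clean factor $(1-\theta_1)^{-2}=\tau^{-2}$. For the second parameter I would exploit that $t\mapsto t\e^{1-t}$ is increasing on $[0,1]$, so that $|zw|\e^{1-|zw|}\le(1-\tau)\e^{\tau}=:\theta_2$; one checks $\log\theta_2=\tau+\log(1-\tau)<0$, whence $\theta_2<1$ and the proposition indeed applies.

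Next, since $z,w\in\D$ the bracketed term obeys $1+|z|^{2\Rpar}+|w|^{2\Rpar}\le3$, so Proposition \ref{kernelestimate} yields
\[
|K_{m,n,\Rpar}(z,w)-K_{m,\Rpar}(z,w)|\,\e^{-m|zw|}\le 3C\,\tau^{-2}\,m^{\Rpar}\theta_2^{m},
\]
with $C=C(\Rpar,M)$. The remaining task is to absorb the right-hand side into $\Ordo(\e^{-\frac12 m\tau^2})$.

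The one point requiring care, and the only genuine obstacle, is the polynomial prefactor $m^{\Rpar}$: naively $m^{\Rpar}\theta_2^m$ is \emph{not} $\Ordo(\e^{-\frac12 m\tau^2})$ unless $\theta_2$ beats $\e^{-\tau^2/2}$ with room to spare. The key observation is that it does so, by a full exponential margin. Expanding the logarithm,
\[
\log\theta_2=\tau+\log(1-\tau)=-\tfrac12\tau^2-\tfrac13\tau^3-\tfrac14\tau^4-\cdots=-\tfrac12\tau^2-\delta(\tau),
\]
where $\delta(\tau):=\tfrac13\tau^3+\tfrac14\tau^4+\cdots>0$ for $0<\tau<1$. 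Hence
\[
m^{\Rpar}\theta_2^{m}=m^{\Rpar}\e^{-\frac12 m\tau^2}\e^{-m\delta(\tau)},
\]
and because $\delta(\tau)>0$ the factor $m^{\Rpar}\e^{-m\delta(\tau)}$ tends to $0$ as $m\to+\infty$ and is therefore bounded by some $C'=C'(\tau,\Rpar)$. Substituting back gives
\[
|K_{m,n,\Rpar}(z,w)-K_{m,\Rpar}(z,w)|\,\e^{-m|zw|}\le 3CC'\tau^{-2}\,\e^{-\frac12 m\tau^2},
\]
which is precisely the asserted estimate $K_{m,n,\Rpar}(z,w)=K_{m,\Rpar}(z,w)+\Ordo(\e^{-\frac12 m\tau^2}\e^{m|zw|})$, with an $\Ordo$-constant depending only on $\tau$, $\Rpar$, and $M$. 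I expect the strict gap $\delta(\tau)>0$ between $\log\theta_2$ and $-\tfrac12\tau^2$ to be exactly what licenses the swallowing of the polynomial factor, so that the seemingly weaker exponent $\tfrac12\tau^2$ in the statement is in fact the correct and attainable one.
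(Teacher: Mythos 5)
Your proposal is correct and follows essentially the same route as the paper: both deduce the corollary from Proposition \ref{kernelestimate} by choosing $\theta_2$ via the Taylor expansion $\tau+\log(1-\tau)=-\tfrac12\tau^2-\tfrac13\tau^3-\cdots$, so that the strictly positive tail $\delta(\tau)$ provides the exponential margin that swallows the polynomial prefactor $m^{\Rpar}$ (the paper phrases this as $t\e^{1-t}<\e^{-\frac12(1-t)^2}$ together with ``the exponential grows faster than any power''). Your write-up simply makes explicit the choices of $\theta_1$, $\theta_2$ and the absorption step that the paper leaves terse.
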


\begin{proof}
A Taylor series expansion of the logarithm gives that
\[
t\,\e^{1-t}<\e^{-\frac12(1-t)^2},\qquad 0\le t<1,
\] 
and, together with the fact that the exponential function grows faster than
any given power, the assertion follows from Proposition \ref{kernelestimate}. 
\end{proof}

As we shall see, Proposition \ref{kernelestimate} implies that the Berezin 
density 
\[
\berd^{\langle z\rangle}_{m,n,\Rpar}(w)= 
\frac{|K_{m,n,\Rpar}(z,w)|^2 }{K_{m,n,\Rpar}(z,z)}\e^{-m|w|^2}
\] 
behaves locally near $z$ like the Berezin density 
\begin{multline}
\label{eq-interm3}
\berd^{\langle z\rangle}_{m,\Rpar}(w)= 
\frac{| K_{m,\Rpar}(z,w)|^2 }{K_{m,\Rpar}(z,z)}\e^{-m|w|^2}=
\frac{m^2L^1_{\Rpar-1}(m|z-w|^2)^2\e^{2m\re z\bar w}}
{mL^1_{q-1}(0)\,\e^{m|z|^2}}\,\e^{-m|w|^2}
\\
=\frac{m}{\Rpar}L^1_{\Rpar-1}(m|z-w|^2)^2\,\e^{-m|z-w|^2}.
\end{multline}
To make this precise, we recall the definition of the blow-up Berezin density: 
\[
\hat{\berd}^{\langle z\rangle}_{m,n,\Rpar}(\xi) = 
m^{-1}\berd^{\langle z\rangle}_{m,n,\Rpar}(z+m^{-1/2}\xi).
\]

\begin{prop}
\label{prop-berconv}
Fix $z\in\D$. Then 
\[
\int_{\mathbb{C}} \big|\hat{\berd}^{\langle z\rangle}_{m,n,\Rpar}(\xi)- 
\Rpar^{-1}L^{1}_{\Rpar-1}(|\xi|^2)^2\e^{-|\xi|^2} \big| \diff A(\xi) 
\longrightarrow 0,
\]
as $m,n\to+\infty$ while $n=m+\Ordo(1)$.
\end{prop}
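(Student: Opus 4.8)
The plan is to reduce the statement to Scheffé's lemma: since the integrand is the absolute value of a difference of two nonnegative functions, it suffices to establish pointwise convergence together with the equality of the total masses, and then the $L^1$-convergence follows automatically without any explicit control of tails.

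First I would establish pointwise convergence. Fix $\xi\in\C$ and set $w=w_m:=z+m^{-1/2}\xi$. Because $z\in\D$, for all sufficiently large $m$ we have $w\in\D$ and $1-|zw|\ge\tau$, where we may take $\tau:=\tfrac12(1-|z|^2)>0$. Corollary \ref{cor-kapprox} then applies to both $K_{m,n,\Rpar}(z,w)$ and $K_{m,n,\Rpar}(z,z)$, giving
\[
K_{m,n,\Rpar}(z,w)=K_{m,\Rpar}(z,w)+\Ordo(\e^{-\frac12 m\tau^2}\e^{m|zw|}),\qquad
K_{m,n,\Rpar}(z,z)=K_{m,\Rpar}(z,z)+\Ordo(\e^{-\frac12 m\tau^2}\e^{m|z|^2}).
\]
Using the explicit formula $K_{m,\Rpar}(z,w)=mL^1_{\Rpar-1}(m|z-w|^2)\e^{mz\bar w}$ from Proposition \ref{prop-repkern}, the main term of $K_{m,\Rpar}(z,z)$ has size $m\Rpar\,\e^{m|z|^2}$ (since $L^1_{\Rpar-1}(0)=\Rpar$), while the error is smaller by a factor $\Ordo(m^{-1}\e^{-\frac12 m\tau^2})$; the relative error in $K_{m,\Rpar}(z,w)$ is of the same order once one checks that $2m|zw|-m|z|^2-m|w|^2=-m(|z|-|w|)^2=\Ordo(1)$ for our choice of $w$. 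Feeding these into the definition of the Berezin density and comparing with the computation \eqref{eq-interm3}, evaluated at the blow-up point $w=z+m^{-1/2}\xi$ (where $m|z-w|^2=|\xi|^2$), we obtain
\[
\hat\berd^{\langle z\rangle}_{m,n,\Rpar}(\xi)
=m^{-1}\berd^{\langle z\rangle}_{m,\Rpar}(z+m^{-1/2}\xi)\,(1+\ordo(1))
=\Rpar^{-1}L^1_{\Rpar-1}(|\xi|^2)^2\,\e^{-|\xi|^2}\,(1+\ordo(1)).
\]
At the zeros of $L^1_{\Rpar-1}(|\xi|^2)$ the leading term vanishes, but the same estimates show directly that $\hat\berd^{\langle z\rangle}_{m,n,\Rpar}(\xi)\to0$ there, so pointwise convergence holds for every $\xi\in\C$.

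Next I would verify that both functions are probability densities. For the blow-up density this follows from the reproducing property: $\int_\C|K_{m,n,\Rpar}(z,w)|^2\e^{-m|w|^2}\diff A(w)=K_{m,n,\Rpar}(z,z)$, so $\int_\C\berd^{\langle z\rangle}_{m,n,\Rpar}(w)\diff A(w)=1$, and the measure-preserving change of variables $w=z+m^{-1/2}\xi$ gives $\int_\C\hat\berd^{\langle z\rangle}_{m,n,\Rpar}(\xi)\diff A(\xi)=1$. For the limit, passing to polar coordinates and substituting $t=|\xi|^2$ reduces $\int_\C\Rpar^{-1}L^1_{\Rpar-1}(|\xi|^2)^2\e^{-|\xi|^2}\diff A(\xi)$ to $\Rpar^{-1}\int_0^\infty L^1_{\Rpar-1}(t)^2\e^{-t}\diff t$; writing $L^1_{\Rpar-1}=\sum_{r=0}^{\Rpar-1}L^0_r$ (as in the proof of Proposition \ref{prop-repkern}) and using the orthonormality of the $L^0_r$ in $L^2([0,+\infty),\e^{-t}\diff t)$, this equals $\Rpar^{-1}\cdot\Rpar=1$.

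With pointwise convergence and equal unit masses in hand, Scheffé's lemma yields the asserted $L^1$-convergence. The main obstacle is the pointwise step: one must check that the exponentially small error $\e^{-\frac12 m\tau^2}$ furnished by Corollary \ref{cor-kapprox} genuinely dominates the polynomial-in-$m$ and $\e^{\Ordo(m^{1/2})}$ growth of the explicit kernel, both in the numerator and the denominator, and that this remains true at the Fresnel zeros where the leading term degenerates. The appeal to Scheffé's lemma is precisely what lets us avoid any explicit estimate of the tails of $\hat\berd^{\langle z\rangle}_{m,n,\Rpar}$, which would otherwise be the delicate part of a direct $L^1$ argument.
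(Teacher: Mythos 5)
Your proposal is correct and follows essentially the same route as the paper: the same kernel approximation (Proposition \ref{prop-repkern} together with Corollary \ref{cor-kapprox}) supplies the local convergence, and the fact that both densities integrate to $1$ dispatches the tails. The only cosmetic difference is that the paper proves a uniform estimate on the expanding disks $\D(0,\log m)$ and then runs the mass-comparison argument by hand, whereas you obtain pointwise convergence and cite Scheff\'e's lemma — the paper's final step is precisely the relevant instance of Scheff\'e, so nothing of substance changes.
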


\begin{proof}
By \eqref{eq-interm3}, we have
\[
\hat{\berd}^{\langle z\rangle}_{m,\Rpar}(\xi) = 
m^{-1}\berd^{\langle z\rangle}_{m,\Rpar}(z+m^{-1/2}\xi)=
\Rpar^{-1}L^{1}_{\Rpar-1}(|\xi|^2)^2\e^{-|\xi|^2},
\]
so the comparison is with the blow-up Berezin density for $K_{m,\Rpar}$.
We write $w:=z+m^{-1/2}\xi$; since $z\in\D$ is fixed, we have 
$1-|z|^2\ge\tau>0$ for some small $\tau$, and we suppose that $w$ is close
to $z$ so that $1-|zw|\ge\tau>0$ as well.
Then, by Proposition \ref{prop-repkern} and Corollary \ref{cor-kapprox}, 
\begin{multline*}
\berd^{\langle z\rangle}_{m,n,\Rpar}(w)= 
\frac{|K_{m,n,\Rpar}(z,w)|^2}{K_{m,n,\Rpar}(z,z)}\e^{-m|w|^2} 
=\frac{|K_{m,\Rpar}(z,w)+\Ordo(\e^{-\frac12 m\tau^2}\e^{m|zw|}) 
|^2}{K_{m,\Rpar}(z,z)+ \Ordo(\e^{-\frac12 m\tau^2}\e^{m|z|^2})}
\e^{-m|w|^2}  
\\
=\frac{\big|mL^{1}_{\Rpar-1}(m|z-w|^2)
\e^{mz\bar{w}} + \Ordo(\e^{-\frac12 m\tau^2}\e^{m|zw|})  
\big|^2}{(m\Rpar+\Ordo(\e^{-\frac12 m\tau^2}))\,\e^{m|z|^2}}\e^{-m|w|^2}, 
\end{multline*}
so that if we we expand the square using that
\[
|L^1_{\Rpar-1}(m|z-w|^2)|=\Ordo(m^{\Rpar-1}),
\]
where the ``O'' depends only on $q$ (this follows from Lemma 
\ref{laguerreestimaatti}), and simplify the expression, we arrive at
\[
\berd^{\langle z\rangle}_{m,n,\Rpar}(w)=
\frac{m}{\Rpar}L^{1}_{\Rpar-1}(m|z-w|^2)^2
\e^{-m|z-w|^2} + 
\Ordo(m^{2\Rpar-2}\e^{-\frac12 m\tau^2}\e^{-m(|z|-|w|)^2}),    
\]
which immediate gives that
\[
\berd^{\langle z\rangle}_{m,n,\Rpar}(w)=
\frac{m}{\Rpar}L^{1}_{\Rpar-1}(m|z-w|^2)^2
\e^{-m|z-w|^2} + 
\Ordo(m^{2\Rpar-2}\e^{-\frac12 m\tau^2}).    
\]
The corresponding blow-up Berezin density then has
\begin{equation}
\label{eq-berasympt}
\hat\berd^{\langle z\rangle}_{m,n,\Rpar}(\xi)
=m^{-1}\berd^{\langle z\rangle}_{m,n,\Rpar}(z+m^{-1/2}\xi)
=\Rpar^{-1}L^{1}_{\Rpar-1}(|\xi|^2)^2
\e^{-|\xi|^2} + 
\Ordo(m^{2\Rpar-3}\e^{-\frac12 m\tau^2}).    
\end{equation}
As exponentials grow faster than polynomials, the error terms is negligible
for big $m$. For fixed $z\in\D$, the requirement on $\xi$ so that $1-|zw|\ge
\tau$ for some fixed $\tau>0$ is fulfilled for big $m$ if, say, 
$|\xi|\le\log m$ is required. So, \eqref{eq-berasympt} has the immediate
consequence that
\begin{equation}
\label{eq-berasympt2}
\int_{\D(0,\log m)}
\big|\hat\berd^{\langle z\rangle}_{m,n,\Rpar}(\xi)
-\Rpar^{-1}L^{1}_{\Rpar-1}(|\xi|^2)^2
\e^{-|\xi|^2}\big|\,\diff A(\xi)=
\Ordo\big(m^{2\Rpar-3}(\log m)^2\e^{-\frac12 m\tau^2}\big),    
\end{equation}
where more generally $\D(z_0,\rho)$ denotes the open disk of radius $\rho$ 
about $z_0$. Since the associated blow-up Berezin measures
\[
\diff\hat B^{\langle z\rangle}_{m,n,\Rpar}(\xi):=
\hat\berd^{\langle z\rangle}_{m,n,\Rpar}(w)
\diff A(\xi),\qquad \diff\hat B^{\langle z\rangle}_{m,\Rpar}(\xi):=
\Rpar^{-1}L^1_{\Rpar-1}(|\xi|^2)^2\e^{-|\xi|^2}\diff A(\xi),
\]
are both probability measures, the assertion of the proposition follows
from \eqref{eq-berasympt2} once it is noted that the right-hand side of
\eqref{eq-berasympt2} tends to $0$ as $m\to+\infty$. 
\end{proof}

We note that the claimed convergence $\diff B^{\langle z\rangle}_{m,n\Rpar}\to
\diff\delta_z$ for $z\in\D$ is an immediate consequence of Proposition 
\ref{prop-berconv}.

\section{Berezin density asymptotics for an 
exterior point}

\noindent{\bf Convergence to harmonic measure.}  
%Let $\mathbb{D}^e:=\{z\in\C:|z|>1\}$ denote the (open) exterior disk. 
We show that the Berezin measures have the convergence
$\diff B^{\langle z\rangle}_{m,n,\Rpar}\to\diff\omega_z$ for $z\in\D^e$ as
$m,n\to+\infty$ while $n=m+\Ordo(1)$. Here, $\omega_z$ is harmonic measure 
with respect to the point $z$ and the exterior disk $\mathbb{D}^e$.
\medskip

\noindent{\bf Concentration of the Berezin mass.}  
We first study the concentration of the Berezin measure to neighborhoods of 
the closed unit disk $\bar\D$. We recall the standard notation $\D(z_0,\rho)$
for the open disk of radius $\rho$ centered at $z_0$. 

\begin{lem} Suppose $z\in\D^e$ and that $\rho>1$. Then 
\begin{equation}
\int_{\C\setminus\D(0,\rho)}\berd^{\langle z\rangle}_{m,n,\Rpar}(w)\,
\diff A(w) \to 0,
\end{equation}
as $m,n\to+\infty$ while $n\leq m+\Ordo(1)$. 
\end{lem}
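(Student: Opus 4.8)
The plan is to dominate the Berezin density pointwise by a Poisson-type (incomplete Gamma) density and then kill its tail with a large-deviation estimate. Since $\berd^{\langle z\rangle}_{m,n,\Rpar}$ is a probability density, the whole issue is the decay of the numerator relative to the denominator once $|w|>\rho>1$. First I would bound the denominator from below. As $z\in\D^e$, every term of the diagonal kernel \eqref{eq-Kmnq} is nonnegative (each has the form $\frac{r!}{(r+i)!}(m|z|^2)^iL_r^i(m|z|^2)^2\ge0$), so keeping only the $r=0$ part and using $L_0^i\equiv1$ gives
\[
K_{m,n,\Rpar}(z,z)\ge m\,E_{n-1}(m|z|^2)\ge m\,\frac{(m|z|^2)^{n-1}}{(n-1)!}.
\]
The one-sided hypothesis $n\le m+\Ordo(1)$ together with $|z|>1$ ensures $n-1<m|z|^2$ for large $m$: the truncation index sits strictly below the peak of the Poisson weights, which is exactly the sense in which $z$ lies outside the droplet.

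Next I would bound $|K_{m,n,\Rpar}(z,w)|$ for $|w|>\rho$. Applying the triangle inequality to \eqref{eq-Kmnq} and the first estimate of Lemma \ref{laguerreestimaatti} to each Laguerre factor, the first double sum is controlled by $m\sum_i\frac{(m|zw|)^i}{i!}$ times polynomial factors in $m|z|^2$ and $m|w|^2$ of degree at most $\Rpar-1$; the second (finite) double sum carries only powers of $w$ of degree $<n$ and is of strictly lower order. Since $|zw|>\rho|z|>1$ while $n-1\le m+\Ordo(1)$, the ratio $(n-1)/(m|zw|)$ stays bounded away from $1$, so the partial exponential sum is dominated by its top term, $E_{n-1}(m|zw|)\le C(m|zw|)^{n-1}/(n-1)!$ with $C$ independent of $w$. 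Collecting these bounds yields
\[
|K_{m,n,\Rpar}(z,w)|\le C\,m\,\frac{(m|zw|)^{n-1}}{(n-1)!}\,P(m|z|^2,m|w|^2),
\]
where $P$ is a polynomial of degree at most $\Rpar-1$ in each argument, with coefficients depending only on $\Rpar$.

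Forming the ratio now produces the crucial cancellation: the factor $(m|z|^2)^{n-1}$ cancels between numerator and denominator, leaving, after squaring,
\[
\berd^{\langle z\rangle}_{m,n,\Rpar}(w)\le C\,m\,\frac{(m|w|^2)^{n-1}}{(n-1)!}\,\e^{-m|w|^2}\,\tilde P(m|z|^2,m|w|^2),
\]
with $\tilde P$ again polynomial. Integrating in polar coordinates and substituting $t=m|w|^2$ turns $\int_{|w|>\rho}m\frac{(m|w|^2)^{n-1}}{(n-1)!}\e^{-m|w|^2}\,\diff A(w)$ into the normalized upper incomplete Gamma integral $\frac{1}{(n-1)!}\int_{m\rho^2}^{\infty}t^{n-1}\e^{-t}\,\diff t=\Pr[\,\mathrm{Gamma}(n,1)\ge m\rho^2\,]$, while the extra factor $\tilde P$ only shifts this to incomplete Gamma functions of slightly higher order. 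Since $\rho^2>1$ and $n\le m+\Ordo(1)$, the threshold $m\rho^2$ exceeds the mean $n$ by a fixed multiplicative factor, so the large-deviation bound $\Pr[\mathrm{Gamma}(n,1)\ge na]\le\e^{-n(a-1-\log a)}$ with $a\approx\rho^2>1$ gives exponential decay that swallows all polynomial prefactors, and the integral tends to $0$.

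The hard part will be organizing this last step so that the exponentially large factor $(m|z|^2)^{n-1}/(n-1)!$ in the denominator cancels cleanly against the numerator \emph{uniformly} in $w$, while keeping the polynomial Laguerre corrections from degrading the large-deviation rate. The one-sided assumption $n\le m+\Ordo(1)$ is precisely what keeps the truncation index below the Poisson peak and thereby yields a strictly positive rate $a-1-\log a$; without it the point $z$ could behave as an interior point and the concentration would fail.
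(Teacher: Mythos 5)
Your argument is correct, and its skeleton coincides with the paper's: you bound the denominator from below by the analytic sub-kernel $K_{m,n,1}(z,z)=m\,E_{n-1}(m|z|^2)$ (the paper uses the inclusion $\mathrm{Pol}_{m,n,1}\subset\mathrm{Pol}_{m,n,\Rpar}$, you use termwise nonnegativity of the diagonal sum --- both are valid), and you bound the numerator from above via the triangle inequality and the first estimate of Lemma \ref{laguerreestimaatti}, reducing everything to the ratio of partial exponential sums $E_{n-1}(m|zw|)^2/E_{n-1}(m|z|^2)$ up to polynomial prefactors of degree $\Ordo(\Rpar)$. Where you genuinely diverge is in how that ratio is handled. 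The paper invokes Szeg\H{o}'s asymptotic formula \eqref{szego2} (in the form \eqref{szego4}) to get the precise asymptotics $E_{n-1}(x)\sim(2\pi(n-1))^{-1/2}(x/(n-1))^{n-1}\e^{n-1}\cdot x/(x-n+1)$ in the regime $x>(n-1)a$, $a>1$, and then integrates. You instead observe that since $(n-1)/(m|zw|)\le\theta<1$ uniformly for $|w|>\rho$, the terms of $E_{n-1}(m|zw|)$ grow geometrically, so the sum is dominated by a constant times its top term $(m|zw|)^{n-1}/(n-1)!$, while the single top term suffices as a lower bound for the denominator; the factor $(m|z|^2)^{n-1}$ then cancels exactly, and the remaining integral is the tail $\Pr[\mathrm{Gamma}(n,1)\ge m\rho^2]$, killed by the Chernoff bound $\e^{-n(a-1-\log a)}$ with $a\approx\rho^2>1$. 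Your route is more elementary and more self-contained: it avoids Szeg\H{o}'s expansion entirely (and hence the uniformity questions about $\epsilon_k^2$ on unbounded sets that the paper has to address), and it makes explicit the final large-deviation step that the paper compresses into ``the claim follows.'' What it gives up is the sharp constant and the precise rate, neither of which is needed for this lemma. The only point worth tightening in a written version is the absorption of the $i$-dependence in the Laguerre bounds $(m|z|^2+i+r)^r$, $(m|w|^2+i+r)^r$ into your polynomial prefactor, using $i\le n-1\le 2m$ as in \eqref{isonosanestimaatti}; this is routine but should be said.
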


\begin{proof}
Let $w\in\C\setminus\D(0,\rho)$. We note that 
\begin{equation}
%i+r\le 
n-1\le m+\Ordo(1)\le 2m
\label{eq-triv1}
\end{equation}
for sufficiently big $m$, and so, by Lemma \ref{laguerreestimaatti},
%\eqref{eq-interm1} based on Lemma \ref{laguerreestimaatti}, we have
\begin{multline} 
\label{isonosanestimaatti}
\sum_{i=0}^{n-r-1} \frac{r!}{(r+i)!} 
\big|(mz\bar{w})^iL_r^{i}(m|z|^2)L_r^{i}(m|w|^2)\big| 
\\
\le\sum_{i=0}^{n-r-1} \frac{1}{r!(r+i)!} 
(m|zw|)^i(m|z|^2+i+r)^r(m|w|^2+i+r)^r 
\\
%=\frac{1}{r!}\sum_{j=1}^{n-r} \frac{1}{(n-j)!} 
%(m|zw|)^{n-r-j}(m|z|^2+n-j)^r(m|w|^2+n-j)^r 
%\\
\leq \frac{9^{r}}{r!}\sum_{i=0}^{n-r-1} \frac{1}{(r+i)!}(m|zw|)^{i+2r}
\le\frac{9^{r}}{r!}(m|zw|)^{r}\,E_{n-1}(m|zw|). 
\end{multline}
%For $r$ confined to $0\le r\le\Rpar-1$, we have that
%\[
%\frac{(r+i)^r}{(r+i)!}\le \frac{C_1(q)}{i!},\qquad i=0,1,2,\ldots,
%\]
%and 
%\[
%\frac{(r+i)^{2r}}{(r+i)!}\le \frac{C_2(q)}{(i-r)!},\qquad i=r,r+1,r+2,\ldots,
%\]
%for some suitable positive constants $C_1(q),C_2(q)$ 
%($C_1(q)=(q-1)^{q-1}/(q-1)!$ and $C_2(q)=(2q-2)^{2q-2}/(2q-2)!$ should work). 
%As we combine these estimates with \eqref{isonosanestimaatti}, we find that
%\begin{multline} 
%\label{isonosanestimaatti2}
%\bigg|\sum_{i=0}^{n-r-1} \frac{r!}{(r+i)!} 
%(mz\bar{w})^iL_r^{i}(m|z|^2)L_r^{i}(m|w|^2) \bigg| 
%\\
%\leq \frac{4^{r-1}}{r!}\Big\{(m|zw|)^r E_{n-1}(m|zw|)+
%C_1(q)[(m|z|^2)^r+(m|w|^2)^r] E_{n-r-1}(m|zw|)
%\\
%+C_2(\Rpar)(m|zw|)^r E_{n-2r-1}(m|zw|)+C_3(q)[1+(m|zw|)^{r-1}]\Big\}, 
%\end{multline}
%for some suitable constant $C_3(q)$. This entails the simplified estimate
%($0\le r\le\Rpar-1$ as before)
%\begin{multline} 
%\label{isonosanestimaatti3}
%\bigg|\sum_{i=0}^{n-r-1} \frac{r!}{(r+i)!} 
%(mz\bar{w})^iL_r^{i}(m|z|^2)L_r^{i}(m|w|^2) \bigg| 
%\\
%\leq C_4(q)[(m|z|^2)^r+(m|w|^2)^r] E_{n-1}(m|zw|)
%\end{multline}
%for some suitable constant $C_4(q)$. In the above considerations, we tacitly
%assumed that $\Rpar\le n$ (after all, we consider the limit as 
%$n\to+\infty$). 
If we plug in $n=\Rpar$ in \eqref{isonosanestimaatti}, we obtain  
%\allowdisplaybreaks{
\begin{equation} 
\label{pienenosanestimaatti}
\sum_{k=1}^{\Rpar-j-1} \frac{j!}{(j+k)!} 
\big|(m\bar{z}w)^k L^{k}_{j}(m|z|^2)L^{k}_{j}(m|w|^2)\big|  
\leq \frac{9^j}{j!}(m|zw|)^j E_{\Rpar-1}(m|zw|).
\end{equation} 
%}
We may restrict to $\Rpar\le n$; after, we are considering the limit as 
$n\to+\infty$.
As $E_{q-1}\le E_{n-1}$ on the positive half-axis for $q\le n$, 
an application of \eqref{isonosanestimaatti} and \eqref{pienenosanestimaatti}
to \eqref{eq-Kmnq} gives
\begin{equation}
\label{isonosanestimaatti4}
|K_{m,n,\Rpar}(z,w)| 
\leq 2m\e^9(m|zw|)^{\Rpar-1}E_{n-1}(m|zw|).
\end{equation}
Next, we observe that $\mathrm{Pol}_{m,n,1}\subset \mathrm{Pol}_{m,n,\Rpar}$
(since $q\ge1$), which implies that 
\[
K_{m,n,\Rpar}(z,z) \geq K_{m,n,1}(z,z)=mE_{n-1}(m|z|^2).
\]
We conclude that the Berezin density may be estimated as follows:
\begin{align} 
\label{suttuvertausanalyyttiseen}
\berd^{\langle z\rangle}_{m,n,\Rpar}(w) \leq  
4m\,\e^{18}(m|zw|)^{2\Rpar-2}\frac{E_{n-1}(m|zw|)^2}{E_{n-1}(m|z|^2)}
\e^{-m|w|^2}.
\end{align}
Finally, we see from Szeg\"o's asymptotical expansion \eqref{szego2} that
\begin{align} 
\label{szego4}
\frac{E_k(l\zeta)}{\e^{l\zeta}}= (2 \pi k)^{-1/2}(l/k)^{k}
\frac{\zeta}{z-k/l}(\zeta\e^{1-l\zeta/k})^k(1+\epsilon^{2}_k(l\zeta/k)).
\end{align}
$k\le l+\Ordo(1)$, where the convergence 
$\epsilon^{2}_k(l\zeta/k)\to0$ is uniform if $\zeta$ is real with 
$\zeta\ge a$ for some fixed $a>1$. This leads to
\[
\frac{E_{n-1}(m|zw|)^2}{E_{n-1}(m|z|^2)}=[2\pi(n-1)]^{-1/2}[(n-1)/m]^{n-1}
\e^{n-1}|w|^{2n-2}(1+\ordo(1)),
\]
where the ``$\ordo$'' term is uniform in the convergence. As we implement this 
estimate in \eqref{suttuvertausanalyyttiseen}, and integrate over $\C\setminus
\D(0,\rho)$, the claim follows. 
\end{proof}
\medskip

\noindent{\bf A principal value calculation.}
We follow the approach of \cite{ahm1}, and calculate a certain principal value
integral.
 
\begin{lem}
Fix $z\in\D^e$. For any $l=0,1,2,\ldots$, we have 
\[ 
\pv 
\int_{\mathbb{C}}w^{-l} \berd^{\langle z\rangle}_{m,n,\Rpar}(w)\diff A(w)
\to z^{-l}, 
\]
as $m,n\to+\infty$ with $n=m+\Ordo(1)$.  
\end{lem}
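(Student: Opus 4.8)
The plan is to compute the principal value integral by a change of variables together with the concentration lemma just proved. First I would write the Berezin density explicitly using the polynomial kernel expansion \eqref{eq-Kmnq}. Since we are integrating $w^{-l}$ against $\berd^{\langle z\rangle}_{m,n,\Rpar}(w)$, and the numerator of the density involves $|K_{m,n,\Rpar}(z,w)|^2$, the key observation is that $K_{m,n,\Rpar}(z,w)$ is a finite linear combination of the orthonormal basis functions $e^1_{i,r}(w)$ and $e^2_{j,k}(w)$ (with $z$-dependent coefficients), so the integral $\int_\C w^{-l}|K_{m,n,\Rpar}(z,w)|^2\e^{-m|w|^2}\diff A(w)$ expands into a sum of integrals of products $e(w)\overline{e'(w)}w^{-l}\e^{-m|w|^2}$. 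Using polar coordinates, the angular integration kills most terms and selects only those index pairs whose combined power of $w$ (counting $\bar w$ negatively) matches $l$; the radial integrals then reduce to orthogonality-type relations among Laguerre polynomials.

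The next step is to show that, after dividing by $K_{m,n,\Rpar}(z,z)$, the surviving terms produce $z^{-l}(1+\ordo(1))$ in the limit. The mechanism is that for $z\in\D^e$ fixed and $l$ fixed, the dominant contribution to both $\int w^{-l}|K_{m,n,\Rpar}(z,w)|^2\e^{-m|w|^2}\diff A(w)$ and to $K_{m,n,\Rpar}(z,z)\e^{-m|z|^2}$ comes from the highest-degree analytic terms (large $i$), precisely because $|z|>1$ makes $(m|z|^2)^i$ increase with $i$. I would invoke Szeg\"o's asymptotics \eqref{szego2}/\eqref{szego4} to pin down this dominant balance, exactly as in the concentration lemma. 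The principal value is needed because $w^{-l}$ is singular at the origin; here the concentration lemma from the previous result is essential, since it shows the Berezin mass escapes any neighborhood of $0$ in the limit, so the singularity contributes nothing and the pv symbol can be handled by excising a small disk $\D(0,\eps)$ whose contribution vanishes as $m,n\to\infty$.

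Concretely, I would split $\C=\D(0,\rho)\cup(\C\setminus\D(0,\rho))$ for some $\rho>1$ with $|z|>\rho$. On the outer region the concentration lemma gives negligible mass, and on $\D(0,\rho)\setminus\D(0,\eps)$ the integrand $w^{-l}\berd^{\langle z\rangle}_{m,n,\Rpar}(w)$ is controlled by the explicit Laguerre--Szeg\"o asymptotics. The cleanest route is probably to recognize the leading-order Berezin density as concentrating its mass like harmonic measure $\diff\omega_z$ on $\partial\D$ (anticipating the main theorem of this section), for which $\pv\int_\C w^{-l}\diff\omega_z(w)=z^{-l}$ is the classical moment identity for harmonic measure of the exterior disk; indeed $\diff\omega_z$ has the Poisson-kernel form on $\T$, and $\int_\T \zeta^{-l}\,\diff\omega_z(\zeta)=z^{-l}$ for $l\ge0$ by expanding the Poisson kernel.

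The main obstacle will be controlling the radial Laguerre integrals uniformly once the $w^{-l}$ factor is inserted, since this shifts the Laguerre parameter and the effective degree, breaking the clean orthogonality that held for $l=0$. In particular, the cross terms between the $e^1$ and $e^2$ families, and between different values of $r$ or $k$, must be shown to be lower order; I expect this to require a careful estimate of integrals $\int_0^\infty t^{i-l/?}L^i_r(t)L^{i'}_{r'}(t)\e^{-t}\dt$ and an appeal to the fact that the polyanalytic corrections (the $e^2$ terms and the $r\ge1$ terms) contribute only finitely many extra summands, hence are negligible against the $nq$-term dominant sum governed by Szeg\"o. Reducing everything to the analytic ($r=0$) case, which matches the Ginibre computation in \cite{ahm1}, and then absorbing the finitely many polyanalytic corrections as $\ordo(1)$, is the technical heart of the argument.
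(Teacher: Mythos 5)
Your opening moves --- expanding $|K_{m,n,\Rpar}(z,w)|^2$ via the orthonormal basis, letting the angular integration in polar coordinates enforce the selection rule $i_2=i_1+l$, and reducing the radial integrals to Laguerre orthogonality through the recurrence $L^{\alpha+p}_r=\sum_s\binom{r-s+p-1}{p-1}L^{\alpha}_s$ --- are exactly what the paper does for the term $|K^{I}_{m,n,\Rpar}|^2$ in its decomposition $K_{m,n,\Rpar}=K^{I}_{m,n,\Rpar}+K^{II}_{m,n,\Rpar}$. But your identification of the dominant balance is wrong, and this is not cosmetic. You propose to ``reduce everything to the analytic ($r=0$) case'' and absorb the polyanalytic contributions as $\ordo(1)$, on the grounds that they ``contribute only finitely many extra summands.'' That is false for the $e^1_{i,r}$ family with $1\le r\le \Rpar-1$: each such $r$ contributes a sum over $0\le i\le n-r-1$, i.e.\ order $n$ terms, and for $z\in\D^e$ each term carries an extra factor of roughly $(m|z|^2)^{r}$ coming from $L^{i}_r(m|z|^2)$ at large argument. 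Consequently the denominator satisfies $K_{m,n,\Rpar}(z,z)\gtrsim m\,(m|z|^2)^{\Rpar-1}E_{n-1}(m|z|^2)/(\Rpar-1)!$, which for $\Rpar>1$ dwarfs the Ginibre kernel $m\,E_{n-1}(m|z|^2)$ by the factor $(m|z|^2)^{\Rpar-1}$. If you keep only the $r=0$ part of the numerator while dividing by the true $K_{m,n,\Rpar}(z,z)$, the ratio tends to $0$, not to $z^{-l}$. The paper's proof does the opposite of what you propose: it shows that all contributions with $r_1\le \Rpar-2$ are $\Ordo(m^{-1})$ after normalization, and the entire limit $z^{-l}$ is produced by the top pure polyanalytic level $r_1=r_2=\Rpar-1$, matched against the correspondingly dominant part of the diagonal kernel. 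Only the $K^{II}$ block and the cross terms are genuinely finite in number and negligible.

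A second, smaller issue: your ``cleanest route'' of recognizing the limiting density as harmonic measure and invoking $\int_{\T}\zeta^{-l}\,\diff\omega_z(\zeta)=z^{-l}$ is circular, since convergence to harmonic measure is precisely the theorem this lemma is designed to feed; it cannot be assumed here. Note also that the paper evaluates the principal value over all of $\C$ directly --- the angular integration handles the singularity at the origin --- so no excision of a small disk around $0$ is needed in this lemma; the separate statement that the mass inside $\D(0,\rho)$ with $\rho<1$ is asymptotically negligible is the content of the \emph{following} lemma, not an input to this one.
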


\begin{proof}
The case $\Rpar=1$ was treated in \cite{ahm1}, so we may from now on assume 
that $\Rpar\ge2$. We write
\[K_{m,n,\Rpar}=K_{m,n,\Rpar}^I+K_{m,n,\Rpar}^{II},
\]
where
\[
K_{m,n,\Rpar}^I(z,w):=m\sum_{r=0}^{\Rpar-1}\sum_{i=0}^{n-r-1} 
\frac{r!}{(r+i)!}(mz \bar{w})^i L_r^{i}(m|z|^2)L_r^{i}(m|w|^2)
\]
and
\[
K_{m,n,\Rpar}^{II}(z,w):=
m\sum_{j=0}^{\Rpar-2} \sum_{k=0}^{\Rpar-j-1} \frac{j!}{(j+k)!} 
(m\bar{z}w)^k L^{k}_{j}(m|z|^2)L^{k}_{j}(m|w|^2). 
\]
It follows that the expression $|K_{m,n,\Rpar}|^2$ decomposes accordingly:
\begin{equation}
|K_{m,n,\Rpar}(z,w)|^2
=|K_{m,n,\Rpar}^I(z,w)|^2+|K_{m,n,\Rpar}^{II}(z,w)|^2
+2\re[ K_{m,n,\Rpar}^I(z,w)\overline{K_{m,n,\Rpar}^{II}(z,w)}].
\label{eq-decomp}
\end{equation}
We first consider the following integral involving $|K_{m,n,\Rpar}^I|^2$:
\begin{multline}
\pv \int_{\mathbb{C}} w^{-l} 
|K^I_{m,n,\Rpar}(z,w)|^2\e^{-m|w|^2} \diff A(w) 
\label{extpoint-rivi5} 
\\
=m^2 \sum_{r_1,r_2=0}^{\Rpar-1}\sum_{i_1=0}^{n-r_1-1} \sum_{i_2=0}^{n-r_2-1}
\pv \int_{\C} w^{-l} 
\frac{r_1!r_2!}{(r_1+i_1)!(r_2+i_2)!}(mz\bar{w})^{i_1}(m\bar{z}w)^{i_2}
\\
\times L_{r_1}^{i_1}(m|z|^2)L^{i_1}_{r_1}(m|w|^2)L^{i_2}_{r_2}(m|z|^2)
L^{i_2}_{r_2}(m|w|^2) \e^{-m|w|^2}\diff A(w)
\\
=m^2 z^{-l}
\sum_{r_1,r_2=0}^{\Rpar-1}\sum_{i_1=0}^{n-r_1-1}\sum_{i_2=0}^{n-r_2-1}
\frac{m^{i_1+i_2}r_1!r_2!}{(r_1+i_1)!(r_2+i_2)!}\delta_{i_2,i_1+l}\,|z|^{2i_2}
L_{r_1}^{i_1}(m|z|^2)L^{i_2}_{r_2}(m|z|^2)
\\
\times\int_{\C} |w|^{2i_1} L^{i_1}_{r_1}(m|w|^2)
L^{i_2}_{r_2}(m|w|^2) \e^{-m|w|^2}\diff A(w),
\end{multline}
where the delta is understood in Kronecker's sense.
The identity (for $p=1,2,3,\ldots$)
\begin{equation} 
\label{recurrencerelation}
L^{\alpha+p}_r(x)= \sum_{s=0}^r \binom{r-s+p-1}{p-1}\,
L_{s}^{\alpha}(x)
\end{equation}
plus the standard orthogonality properties of the Laguerre polynomials 
gives that
\[
\int_0^{+\infty}t^{i_1}L_{r_1}^{i_1}(t)L_{r_2}^{i_1+l}(t)\e^{-t}\diff t=
\frac{(r_1+i_1)!(r_2-r_1+l-1)!}{r_1!(r_2-r_1)!(l-1)!},
\]
where the right-hand side should be interpreted as $0$ for $r_2<r_1$. By
polar coordinates, then, we have
\[
\int_\C |w|^{2i_1}L_{r_1}^{i_1}(m|w|^2)L_{r_2}^{i_1+l}(m|w|^2)\e^{-m|w|^2}
\diff A(w)=
m^{-i_1-1}\frac{(r_1+i_1)!(r_2-r_1+l-1)!}{r_1!(r_2-r_1)!(l-1)!},
\]
and \eqref{extpoint-rivi5} simplifies to
\begin{multline}
\label{extpoint-rivi5.5}
\pv \int_{\mathbb{C}} w^{-l} 
|K^I_{m,n,\Rpar}(z,w)|^2\e^{-m|w|^2} \diff A(w) 
%\label{extpoint-rivi6} 
\\
=m\,z^{-l}
\sum_{r_1=0}^{\Rpar-1}\sum_{r_2=r_1}^{\Rpar-1}\sum_{i_1=0}^{n-l-r_2-1}
\frac{r_2!(r_2-r_1+l-1)!}{(r_2-r_1)!(l-1)!(r_2+i_1+l)!}
\,(m|z|^{2})^{i_1+l}L_{r_1}^{i_1}(m|z|^2)L^{i_1+l}_{r_2}(m|z|^2),
\end{multline}
provided $n$ is so big that $\Rpar+l\le n$. Next, we apply Lemma 
\ref{laguerreestimaatti} and \eqref{eq-triv1} (using that $z\in\D^e$) 
to arrive at
\begin{multline}
\label{extpoint-rivi5.6}
\sum_{i_1=0}^{n-l-r_2-1}\frac{r_1!r_2!}{(r_2+i_1+l)!} 
\,(m|z|^{2})^{i_1+l}\big|L_{r_1}^{i_1}(m|z|^2)L^{i_1+l}_{r_2}(m|z|^2)\big|
\\
\le\sum_{i_1=0}^{n-l-r_2-1}\frac{1}{(r_2+i_1+l)!} 
\,(m|z|^{2})^{i_1+l}(m|z|^2+i_1+r_1)^{r_1}(m|z|^2+i_1+l+r_2)^{r_2}
\\
\le 3^{r_1+r_2}\sum_{i_1=0}^{n-l-r_2-1}\frac{1}{(r_2+i_1+l)!} 
\,(m|z|^{2})^{r_1+r_2+i_1+l}\le 3^{r_1+r_2}(m|z|^2)^{r_1}E_{n-1}(m|z|^2).
\end{multline}
%so that
%\begin{multline}
%\bigg|\pv \int_{\mathbb{C}} w^{-l} 
%|K^I_{m,n,\Rpar}(z,w)|^2\e^{-m|w|^2} \diff A(w)\bigg|
%\label{extpoint-rivi6} 
%\\
%\le m\,\frac{|z|^{-l}}{(l-1)!}
%\sum_{r_1=0}^{\Rpar-1}\sum_{r_2=r_1}^{\Rpar-1}\sum_{i_1=0}^{n-l-r_2-1}
%\frac{(r_2-r_1+l-1)!}{r_1!(r_2-r_1)!(r_2+i_1+l)!}
%\,(m|z|^{2})^{r_1+r_2+i_1+l},
%\end{multline}
On the other hand, by the estimate from below in Lemma 
\ref{laguerreestimaatti},
\begin{multline} 
\label{berezin-outside-onepointestimate}
K_{m,n,\Rpar}(z,z) \geq K^{I}_{m,n,\Rpar}(z,z)=
m\sum_{r=0}^{\Rpar-1} \sum_{i=0}^{n-r-1} \frac{r!}{(r+i)!} 
(m|z|^2)^i L^{i}_{r}(m|z|^2)^2
\\
\ge m\sum_{r=0}^{\Rpar-1} \sum_{i=0}^{n-r-1} \frac{1}{r!(r+i)!} 
(m|z|^2)^i [m|z|^2-\beta(n)]^{2r}
%\ge Cm(m|z|^2)^\Rpar E_N(m|z|^2)=n+\Ordo(\sqrt{n}). 
\end{multline}
provided that $m|z|^2\ge\beta(n)$, where 
\[
\beta(n):=n+q-4+2\sqrt{\tfrac14+(q-2)(n-1)}=n+\Ordo(\sqrt{n}).
\]
As we assume $z\in\D^e$ and $n=m+\Ordo(1)$, we must have
\[
m|z|^2-\beta(n)=m(|z|^2-1)+m-\beta(n)\ge \tfrac12 m(|z|^2-1)
\]
for big enough $m,n$, and so, by \eqref{berezin-outside-onepointestimate},
\begin{multline} 
\label{berezin-outside-onepointestimate2}
K_{m,n,\Rpar}(z,z) \geq K^{I}_{m,n,\Rpar}(z,z)
\ge m\sum_{r=0}^{\Rpar-1} \sum_{i=0}^{n-r-1} \frac{4^{-r}}{r!(r+i)!} 
(m|z|^2)^{i+2r} 
\\
\ge\frac{4^{1-q}m}{(q-1)!}(m|z|)^{q-1}[E_{n-1}(m|z|^2)-
E_{q-2}(m|z|^2)]\ge\frac{4^{-q}m}{(q-1)!}(m|z|)^{q-1}E_{n-1}(m|z|^2).
\end{multline}
Here, we used that the $E_{n-1}(m|z|^2)$ is much bigger than $E_{q-2}(m|z|^2)$ 
as $m,n$ both grow. Let us look at the contribution from $0\le r_1\le q-2$
in the right-hand side of \eqref{extpoint-rivi5.5} using the estimate
\eqref{extpoint-rivi5.6}:
\begin{multline*}
m\,|z|^{-l}
\sum_{r_1=0}^{\Rpar-2}\sum_{r_2=r_1}^{\Rpar-1}\sum_{i_1=0}^{n-l-r_2-1}
\frac{r_2!(r_2-r_1+l-1)!}{(r_2-r_1)!(l-1)!(r_2+i_1+l)!}
\,(m|z|^{2})^{i_1+l}\big|L_{r_1}^{i_1}(m|z|^2)L^{i_1+l}_{r_2}(m|z|^2)\big|
\\
\le 
m\,\frac{|z|^{-l}}{(l-1)!}\sum_{r_1=0}^{\Rpar-2}\sum_{r_2=r_1}^{\Rpar-1}
\frac{(r_2-r_1+l-1)!}{r_1!(r_2-r_1)!}3^{r_1+r_2}(m|z|^2)^{r_1}E_{n-1}(m|z|^2)
\\
\le m C_1(q,l)(m|z|^2)^{q-2}E_{n-1}(m|z|^2),
\end{multline*}
for an appropriate positive constant $C_1(q,l)$. As we combine this estimate
with \eqref{berezin-outside-onepointestimate2}, we obtain
\begin{multline*}
m\,\frac{|z|^{-l}}{K_{m,n,q}(z,z)}
\sum_{r_1=0}^{\Rpar-2}\sum_{r_2=r_1}^{\Rpar-1}\sum_{i_1=0}^{n-l-r_2-1}
\frac{r_2!(r_2-r_1+l-1)!}{(r_2-r_1)!(l-1)!(r_2+i_1+l)!}
\\
\times(m|z|^{2})^{i_1+l}\big|L_{r_1}^{i_1}(m|z|^2)L^{i_1+l}_{r_2}(m|z|^2)\big|
\le C_2(q,l)m^{-1}=\Ordo(m^{-1})\longrightarrow0,
\end{multline*}
as $m,n\to+\infty$ in the prescribed fashion. So the contribution to
\eqref{extpoint-rivi5.5} which comes from $0\le r_1\le q-2$ is negligible
from the point of view of the Berezin density. It remains to consider the 
contribution from $r_1=q-1$. The corresponding part of the sum in the
right-hand side of \eqref{extpoint-rivi5.5} equals
\begin{equation*}
%\label{extpoint-rivi5.7}
m\,z^{-l}
\sum_{i_1=0}^{n-l-q}
\frac{(q-1)!}{(q+i_1+l-1)!}
\,(m|z|^{2})^{i_1+l}L_{q-1}^{i_1}(m|z|^2)L^{i_1+l}_{q-1}(m|z|^2),
\end{equation*}
and we now claim that
\begin{equation}
\label{extpoint-rivi5.7}
m\,\frac{z^{-l}}{K_{m,n,q}(z,z)}
\sum_{i_1=0}^{n-l-q}
\frac{(q-1)!}{(q+i_1+l-1)!}
\,(m|z|^{2})^{i_1+l}L_{q-1}^{i_1}(m|z|^2)L^{i_1+l}_{q-1}(m|z|^2)
\longrightarrow z^{-l},
\end{equation}
as $m,n\to+\infty$ in the given fashion.
We use the recurrence relation (\ref{recurrencerelation}) to write 
\begin{equation} 
\label{eq-triv2}
L^{i_1}_{\Rpar-1}(x) = L^{i_1+l}_{\Rpar-1}(x)- \sum_{s=0}^{\Rpar-2} 
\binom{q+l-s-2}{l-1}\, L^{i_1}_s(x).
\end{equation}
A straightforward argument allows us to show that as insert this into 
\eqref{extpoint-rivi5.7}, the sum that is subtracted on the right-hand
side of \eqref{eq-triv2} makes asymptotically no contribution to the sum
in \eqref{extpoint-rivi5.7}. Consequently, \eqref{extpoint-rivi5.7} is 
equivalent to having
\begin{align}
\label{eq-triv3}
m\,\frac{z^{-l}}{K_{m,n,\Rpar}(z,z)}\sum_{i_1=0}^{n-l-q} \frac{(\Rpar-1)!}
{(\Rpar+i_1+l-1)!} (m|z|^2)^{i_1+l} L^{i_1+l}_{\Rpar-1}(m|z|^2)^2
\longrightarrow z^{-l}  
\end{align}
as $m,n\to+\infty$ in the given fashion.
If we insert the expression \eqref{eq-Kmnq} defining $K_{m,n,q}(z,z)$, we 
see that (there is some cancellation of terms)
\begin{multline}
z^{-l}-m\,\frac{z^{-l}}{K_{m,n,\Rpar}(z,z)}\sum_{i_1=0}^{n-l-\Rpar} 
\frac{(\Rpar-1)!}
{(\Rpar+i_1+l-1)!} (m|z|^2)^{i_1+l} L^{i_1+l}_{\Rpar-1}(m|z|^2)
L^{i_1+l}_{\Rpar-1}(m|z|^2)  
\\
=\frac{z^{-l}}{K_{m,n,\Rpar}(z,z)}\Bigg\{
m\sum_{r=0}^{\Rpar-2}\sum_{i=0}^{n-r-1} \frac{r!}{(r+i)!} 
(m|z|^2)^i L_r^{i}(m|z|^2)^2 
\\
+m \sum_{j=1}^{\Rpar-2}\sum_{k=1}^{\Rpar-j-1} \frac{j!}{(j+k)!} 
(m|z|^2)^k L^{k}_{j}(m|z|^2)^2 
+m\sum_{i_1=0}^{l-1} \frac{(\Rpar-1)!}
{(\Rpar+i-1)!} (m|z|^2)^{i} L^{i}_{\Rpar-1}(m|z|^2)^2
\Bigg\}.
\end{multline}
By careful application of Lemma \ref{laguerreestimaatti} to all the
Laguerre polynomials in this expression, while inserting 
\eqref{berezin-outside-onepointestimate} to control the denominator, 
we indeed get \eqref{eq-triv3}. So, after a lot of effort, we have obtained
that
\begin{equation}
\label{eq-triv5}
\pv 
\int_{\mathbb{C}} w^{-l} 
\frac{|K^I_{m,n,\Rpar}(z,w)|^2}{K_{m,n,q}(z,z)}\e^{-m|w|^2} \diff A(w) 
\longrightarrow z^{-l}
\end{equation}
as $m,n\to+\infty$ with $n=m+\Ordo(1)$. 
Analogous but slightly easier arguments (left to the interested reader) 
show that 
\begin{equation*}
\pv 
\int_{\mathbb{C}}w^{-j}\frac{|K^{II}_{m,n,\Rpar}(z,w)|^2}{K_{m,n,q}(z,z)} 
\e^{-|w|^2}\diff A(w) \longrightarrow 0 
\end{equation*}
and
\[
\int_{\mathbb{C}}w^{-j}\re\Bigg\{\frac{K^{I}_{m,n,\Rpar}(z,w)
\overline{K^{II}_{m,n,\Rpar}(z,w)}}{K_{m,n,q}(z,z)}\Bigg\}
\,\e^{-|w|^2}\diff A(w) \longrightarrow 0,
\]
again as $m,n\to+\infty$ with $n=m+\Ordo(1)$.
Finally, we put everything together based on the decomposition 
\eqref{eq-decomp}:
\[
\pv\int_{\mathbb{C}}w^{-j}\berd^{\langle z\rangle}_{m,n,q}(w)=\pv\int_{\C}
\frac{|K_{m,n,\Rpar}(z,w)|^2}{K_{m,n,q}(z,z)}\,\e^{-|w|^2}\diff A(w) 
\longrightarrow z^{-l},
\]
as $m,n\to+\infty$ with $n=m+\Ordo(1)$. This ends the proof. 
\end{proof}
\medskip

\noindent{\bf Convergence to harmonic measure.}
We now show that even in the principal value sense, the Berezin density 
tends to avoid the interior of the unit disk.

\begin{lem}
Fix a real parameter $\rho$ with $0<\rho<1$ and a point $z\in\D^e$.
Then, for $l=0,1,2,\ldots$, we have the convergence 
\[ 
\pv \int_{\D(0,\rho)}w^{-l}\berd^{\langle z\rangle}_{m,n,\Rpar}(w)\diff A(w) 
\longrightarrow0, 
\]
as $m,n\to+\infty$ with $n=m+\Ordo(1)$. 
\end{lem}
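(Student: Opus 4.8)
The plan is to mimic the computation of the preceding lemma, but to keep track of the fact that $w$ is now confined to the disk $\D(0,\rho)$ with $\rho<1$. Writing $\berd^{\langle z\rangle}_{m,n,\Rpar}(w)=|K_{m,n,\Rpar}(z,w)|^2\e^{-m|w|^2}/K_{m,n,\Rpar}(z,z)$ and splitting $K_{m,n,\Rpar}=K^I_{m,n,\Rpar}+K^{II}_{m,n,\Rpar}$ as before, I pass to polar coordinates. Because $\e^{-m|w|^2}$ is radial, the angular integration against $w^{-l}$ again produces the Kronecker selection $i_2=i_1+l$; the surviving radial powers are then all of the form $|w|^{2i_1}$, so the principal value is an ordinary convergent integral and no singularity at the origin survives. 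This is exactly the passage that led to \eqref{extpoint-rivi5.5}, the only difference being that every radial integral $\int_0^{+\infty}t^{i_1}L^{i_1}_{r_1}(t)L^{i_1+l}_{r_2}(t)\e^{-t}\diff t$ is now replaced by its truncation $\int_0^{m\rho^2}(\cdots)\diff t$. After bounding the Laguerre factors by Lemma \ref{laguerreestimaatti}, the truncated integrals coming from $|K^{II}_{m,n,\Rpar}|^2$ and from the cross term are dominated by their full-range analogues, which the preceding lemma already showed to be negligible against $K_{m,n,\Rpar}(z,z)$; so the whole problem reduces to the $|K^I_{m,n,\Rpar}|^2$ contribution.

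For that contribution I must show that the truncated sum obtained from \eqref{extpoint-rivi5.5} tends to $0$ after division by $K_{m,n,\Rpar}(z,z)$, whereas its full-range counterpart tends to $z^{-l}$. The mechanism is concentration. In each radial integral the weight $t^{i_1}\e^{-t}$ peaks at $t=i_1$, while the coefficients multiplying these integrals behave like $(m|z|^2)^{i_1}/i_1!$ and are increasing across the admissible range $0\le i_1\le n-l-\Rpar$, since $m|z|^2>m\ge n-\Ordo(1)$. Hence the full sum is governed by its top terms $i_1\approx m$, for which the peak $t=i_1\approx m$ lies well to the right of the truncation point $m\rho^2$ (recall $\rho^2<1$). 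Truncating to $[0,m\rho^2]$ therefore discards precisely the dominant mass; this is the exterior-point counterpart of the inequality $t\e^{1-t}<1$ for $0\le t<1$ exploited in Corollary \ref{cor-kapprox}.

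To turn this into an estimate I split the $i_1$-summation at $i_1=m\rho^2$. For $i_1\ge m\rho^2$ the integrand $t^{i_1}\e^{-t}$ is increasing on $[0,m\rho^2]$, so the truncated integral is at most $(m\rho^2)^{i_1+1}\e^{-m\rho^2}$; by Stirling its ratio to the full-range value (of order $i_1!$) is $\Ordo\big(m^{\Ordo(1)}\e^{m f(i_1/m)}\big)$, where $f(c)=c\log(\e\rho^2/c)-\rho^2$ satisfies $f\le0$ on $[\rho^2,1]$ with $f(1)=1+\log\rho^2-\rho^2<0$. For $i_1<m\rho^2$ truncation gives no gain, but here the coefficients are themselves an exponentially small fraction of the total: the ratio of the $i_1$-th coefficient to the dominant one at $i_1\approx m$ is, again by Stirling, of order $\e^{m g(i_1/m)}$ with $g(x)=(x-1)(1+2\log|z|)-x\log x$ strictly negative on $[0,\rho^2]$ (the borderline case $x=\rho^2$ uses $x\log(1/x)<1-x$). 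In either regime one wins by a fixed exponential rate, uniformly in $r_1,r_2$ and $l$. Summing the $\Ordo(m)$ terms and dividing by the lower bound \eqref{berezin-outside-onepointestimate2} for $K_{m,n,\Rpar}(z,z)$, which is of exponential order $(\e|z|^2)^m$, shows that the $|K^I_{m,n,\Rpar}|^2$ contribution is $\Ordo\big(m^{\Ordo(1)}\e^{-cm}\big)\to0$ for some $c>0$.

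The heart of the matter, and the step I expect to require the most care, is the two-regime estimate of the previous paragraph: one must verify that the exponential gain survives division by the exponentially large normalizer $K_{m,n,\Rpar}(z,z)$, which forces one to match the rates $f$ and $g$ against the growth $(\e|z|^2)^m$ of the denominator uniformly in the summation indices, with the Szeg\"o expansions \eqref{szego2}, \eqref{szego4} controlling the partial exponential sums through the transition $|z|\,|w|=1$. For orientation it is worth noting that the case $l=0$ admits a softer argument: the pointwise bound \eqref{suttuvertausanalyyttiseen} is increasing in $|w|$ on $\D(0,\rho)$ and, upon inserting the Szeg\"o asymptotics for both numerator and denominator, is majorized there by $Cm^{\Ordo(1)}(\rho^2\e^{1-\rho^2})^m$; integrating yields $\int_{\D(0,\rho)}\berd^{\langle z\rangle}_{m,n,\Rpar}(w)\diff A(w)\to0$ directly, and this also disposes of the part of the $l\ge1$ integral over any annulus $\{\,\eps\le|w|\le\rho\,\}$ on which $|w^{-l}|$ is bounded, leaving only the innermost disk to the coefficient analysis above.
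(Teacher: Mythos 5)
Your proposal is correct and follows essentially the same route as the paper: the same decomposition $K_{m,n,\Rpar}=K^I_{m,n,\Rpar}+K^{II}_{m,n,\Rpar}$ with the latter two contributions dismissed as easier, the same polar-coordinate reduction with the Kronecker selection $i_2=i_1+l$ killing the singularity of $w^{-l}$, and the same concentration mechanism in which the radial integrals truncated at $t=m\rho^2$ cannot keep up with the top terms $i_1\approx m$ that dominate $K_{m,n,\Rpar}(z,z)$. The only difference is one of explicitness: where the paper invokes the decay of the normalized incomplete gamma function $\chi(i_1,m\rho^2)$ past $i_1\approx m\rho^2$ and asserts qualitatively that the effectively truncated sum cannot compare with \eqref{berezin-outside-onepointestimate2}, you carry out the two-regime Stirling analysis with the rate functions $f$ and $g$, which is a legitimate (and somewhat more quantitative) way to finish the same argument.
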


\begin{proof}
In terms of the decomposition \eqref{eq-decomp}, we will focus on the 
the term $|K^I_{m,n,q}|^2$ and leave the other two
to the reader (the necessary arguments are similar but slightly easier). 
The analogue of \eqref{extpoint-rivi5} reads
\begin{multline}
\label{eq-triv7}
\pv \int_{\D(0,\rho)} w^{-l} 
|K^I_{m,n,q}(z,w)|^2\e^{-m|w|^2} \diff A(w)
\\
= m^{2} z^{-l} \sum_{r_1,r_2=0}^{\Rpar-1}
\sum_{i_1=0}^{\min\{n-r_1-1,n-l-r_2-1\}}
\frac{r_1!r_2!}{(r_1+i_1)!(r_2+i_1+l)!}
m(|z|^2)^{i_1+l}L^{i_1}_{r_1}(m|z|^2)L^{i_1+l}_{r_2}(m|z|^2) 
\\
\times\int_{\D(0,\rho)} (m|w|^2)^{i_1}L^{i_1}_{r_1}(m|w|^2)
L^{i_1+l}_{r_2}(m|w|^2)\e^{-m|w|^2} \diff A(w). 
\end{multline}
By Lemma \ref{laguerreestimaatti}, we have
\begin{multline*}
\int_{\D(0,\rho)} (m|w|^2)^{i_1}\big|L^{i_1}_{r_1}(m|w|^2)
L^{i_1+l}_{r_2}(m|w|^2)\big|\e^{-m|w|^2} \diff A(w)
\\
\le\frac{1}{r_1!r_2!}\int_{\D(0,\rho)} (m|w|^2)^{i_1}(m|w|^2+i_1+r_1)^{r_1}
(m|w|^2+i_1+l+r_2)^{r_2}\e^{-m|w|^2} \diff A(w)
\\
\le
\frac{2^{r_1+r_2-2}}{r_1!r_2!}\int_{\D(0,\rho)} (m|w|^2)^{i_1}
[(m|w|^2)^{r_1}+(i_1+r_1)^{r_1}][
(m|w|^2)^{r_2}+(i_1+l+r_2)^{r_2}]\e^{-m|w|^2} \diff A(w)
\\
=\frac{2^{r_1+r_2-2}}{r_1!r_2!m}\Bigg\{
\int_0^{m\rho^2}t^{i_1+r_1+r_2}\e^{-t}\diff t+
(i_1+r_1)^{r_1}\int_0^{m\rho^2}t^{i_1+r_2}\e^{-t}\diff t
\\
+(i_1+l+r_2)^{r_2}\int_0^{m\rho^2}t^{i_1+r_1}\e^{-t}\diff t
+(i_1+r_1)^{r_1}(i_1+l+r_2)^{r_2}\int_0^{m\rho^2}t^{i_1}\e^{-t}\diff t\Bigg\}.
\end{multline*}
In terms of the function
\[
\chi(a,b):=\frac{1}{\Gamma(a+1)}\int_0^b t^a \e^{-t}\diff t,
\qquad a,b\in[0,+\infty[,
\]
which takes values in the interval $[0,1[$, the estimate becomes
\begin{multline*}
%\label{eq-triv8}
\int_{\D(0,\rho)} (m|w|^2)^{i_1}\big|L^{i_1}_{r_1}(m|w|^2)
L^{i_1+l}_{r_2}(m|w|^2)\big|\e^{-m|w|^2} \diff A(w)
\\
\le\frac{2^{r_1+r_2-2}}{r_1!r_2!m}\Bigg\{
(i_1+r_1+r_2)!\chi(i_1+r_1+r_2,m\rho^2)+
(i_1+r_1)^{r_1}(i_1+r_2)!\chi(i_1+r_2,m\rho^2)
\\
+(i_1+l+r_2)^{r_2}(i_1+r_1)!\chi(i_1+r_1,m\rho^2)
+(i_1+r_1)^{r_1}(i_1+l+r_2)^{r_2}i_1!\chi(i_1,m\rho^2)\Bigg\}.
\end{multline*}
and if we use that $a\mapsto\chi(a,b)$ is decreasing for fixed $b$
(a direct calculation involving derivatives suffices to verify this),
we get
\begin{multline}
\label{eq-triv8}
\int_{\D(0,\rho)} (m|w|^2)^{i_1}\big|L^{i_1}_{r_1}(m|w|^2)
L^{i_1+l}_{r_2}(m|w|^2)\big|\e^{-m|w|^2} \diff A(w)
\\
\le\frac{2^{r_1+r_2-2}}{r_1!r_2!m}\Bigg\{
(i_1+r_1+r_2)!+
(i_1+r_1)^{r_1}(i_1+r_2)!
\\
+(i_1+l+r_2)^{r_2}(i_1+r_1)!
+(i_1+r_1)^{r_1}(i_1+l+r_2)^{r_2}i_1!\Bigg\}\chi(i_1,m\rho^2).
\end{multline}
Next, since $z\in\D^e$, $n=m+\Ordo(1)$, and $i_1+l\le n-1$, we may use 
another aspect of Lemma \ref{laguerreestimaatti} to see that
\begin{equation}
\label{eq-triv9}
\big|L^{i_1}_{r_1}(m|z|^2)L^{i_1+l}_{r_2}(m|z|^2)\big|\le\frac{1}{r_1!r_2!}
(m|z|^2)^{r_1+r_2},
\end{equation}
provided $m,n$ are big enough. As we combine the equality \eqref{eq-triv7} 
with the estimates \eqref{eq-triv8} and \eqref{eq-triv9}, and use
some well-understood comparisons of factorials and powers, we arrive at
\begin{multline}
\label{eq-triv10}
\bigg|\pv \int_{\D(0,\rho)} w^{-l} 
|K^I_{m,n,q}(z,w)|^2\e^{-m|w|^2} \diff A(w)\bigg|
\\
\le m\,C_3(q,l)|z|^{-l+2q-2}\sum_{i_1=0}^{n-l-1}
\frac{(m|z|^2)^{i_1+l}}{(i_1+l)!}\chi(i_1,m\rho^2),
\end{multline}
for some appropriate positive constant $C_3(q,l)$. The function 
$\chi(i_1,m\rho^2)$ drops off exponentially quickly to $0$ as $i_1$ 
exceeds $m\rho^2$ by a margin greater than $\Ordo(m^{1/2})$, as can be seen, 
e.g., by an application of the Central Limit Theorem (compare with the
next section). This means that 
effectively we are summing up to $m|\rho|^2+\Ordo(m^{1/2})$ in the right-hand 
side expression of \eqref{eq-triv10}, which does not permit the sum to compare 
with the size of $K_{m,n,q}(z,z)$; cf. 
\eqref{berezin-outside-onepointestimate2}.
This results in the convergence
\begin{equation*}
%\label{eq-triv10}
\pv \int_{\D(0,\rho)} w^{-l} 
\frac{|K^I_{m,n,q}(z,w)|^2}{K_{m,n,q}(z,z)}\,\e^{-m|w|^2} \diff A(w)
\longrightarrow0
\end{equation*}
as $m,n\to+\infty$ with $n=m+\Ordo(1)$. Together with the estimates which were
left as an exercise to the reader, we get
\begin{equation*}
%\label{eq-triv10}
\pv \int_{\D(0,\rho)} w^{-l} 
\frac{|K_{m,n,q}(z,w)|^2}{K_{m,n,q}(z,z)}\,\e^{-m|w|^2} \diff A(w)
\longrightarrow0
\end{equation*} 
as $m,n\to+\infty$ with $n=m+\Ordo(1)$, which amounts to the assertion of the
lemma.
\end{proof}

As in the proof of the Theorem 2.11 in \cite{ahm1}, we may now conclude
the following. 
  
\begin{thm}
Fix $z\in\D^e$ and a bounded continuous function $g$ on $\mathbb{C}$. Then 
\[ 
\int_{\C}g(w)\berd^{\langle z\rangle}_{m,n,\Rpar}(w)\diff A(w)\to\int_{\C}g(w)
\diff\omega(w,z, \mathbb{D}^*) , 
\] 
as $m,n \to+\infty$ with $n=m+\Ordo(1)$. Here, 
$ \diff\omega(w,z, \mathbb{D}^*) $ is harmonic measure with respect to the 
point $z$ and the domain $\D^e$.
\end{thm}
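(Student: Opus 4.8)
The plan is to identify the weak-star limit of the probability measures $\diff B^{\langle z\rangle}_{m,n,\Rpar}=\berd^{\langle z\rangle}_{m,n,\Rpar}\,\diff A$ through their moments against the functions $w\mapsto w^{-l}$, exactly as in the proof of Theorem 2.11 of \cite{ahm1}. First I would record that each $\berd^{\langle z\rangle}_{m,n,\Rpar}\,\diff A$ is a probability measure (by the reproducing property, $\int_\C\berd^{\langle z\rangle}_{m,n,\Rpar}\,\diff A=1$), and that by the mass-concentration lemma above (with $\rho>1$) together with the interior-avoidance lemma at $l=0$ (with $\rho<1$), the mass escapes neither to infinity nor into $\D$. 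Hence the family is tight, and every weak-star subsequential limit $\mu$ is a probability measure supported on the unit circle $\T$.

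Next I would pin down the moments of any such limit $\mu$. Fix $l\ge1$ and choose a bounded continuous function $\phi$ on $\C$ that agrees with $w^{-l}$ on $\C\setminus\D(0,\eps)$ for some small $\eps\in{}]0,1[$. Weak-star convergence gives $\int_\C\phi\,\diff B^{\langle z\rangle}_{m,n,\Rpar}\to\int_\C\phi\,\diff\mu=\int_\T w^{-l}\,\diff\mu$, the last equality because $\mu$ lives on $\T$, where $\phi=w^{-l}$. On the other hand, the principal-value lemma gives $\pv\int_\C w^{-l}\,\diff B^{\langle z\rangle}_{m,n,\Rpar}\to z^{-l}$, while the interior-avoidance lemma (applied with $\rho=\eps$) shows that replacing $\phi$ by $w^{-l}$ changes the integral only by the contribution over $\D(0,\eps)$, which tends to $0$. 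Therefore $\int_\T w^{-l}\,\diff\mu=z^{-l}$ for every $l\ge0$ (the case $l=0$ being the statement that the total mass is $1$). Taking complex conjugates and using $\bar w=w^{-1}$ on $\T$, I also obtain $\int_\T w^{l}\,\diff\mu=\bar z^{-l}$. Thus all integer Fourier coefficients of $\mu$ are determined, and since the trigonometric polynomials are dense in $C(\T)$, the limit $\mu$ is unique.

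Finally I would check that harmonic measure $\omega_z$ realizes precisely these moments. For $l\ge0$ the function $w\mapsto w^{-l}$ is holomorphic and bounded on $\D^e$ and continuous up to $\T$, hence it is its own harmonic extension, so $\int_\T w^{-l}\,\diff\omega_z=z^{-l}$. Consequently $\mu=\omega_z$, so every subsequential weak-star limit equals $\omega_z$; combined with tightness this forces $\diff B^{\langle z\rangle}_{m,n,\Rpar}\to\diff\omega_z$ in the weak-star sense. Since the measures are probability measures and the family is tight, weak-star convergence against $C_0(\C)$ upgrades to convergence against every bounded continuous $g$ (by the Portmanteau characterization of narrow convergence), which is exactly the stated conclusion.

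The genuinely hard analytic input, the Szeg\"o and Laguerre asymptotics, has already been spent in the three preceding lemmas, so the only delicate point remaining is the interplay between the principal value at the origin and weak-star testing: one must ensure that the pole of $w^{-l}$ at $0$ contributes nothing in the limit, which is guaranteed precisely by the interior-avoidance lemma. Everything else is the standard moment-determinacy argument for measures on the circle, so I expect the write-up to be short.
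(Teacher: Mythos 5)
Your argument is correct and is exactly the route the paper intends: it invokes the three preceding lemmas (mass concentration outside $\D(0,\rho)$ for $\rho>1$, the principal-value moment computation, and the interior-avoidance estimate on $\D(0,\rho)$ for $\rho<1$) and then concludes ``as in the proof of Theorem 2.11 in \cite{ahm1}'', which is precisely the moment/tightness argument you have written out. Your write-up simply supplies the details the paper leaves to the reference, including the correct handling of the pole of $w^{-l}$ at the origin and the upgrade from weak-star to narrow convergence.
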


\section{Poly-Bargmann transforms}

\noindent{\bf Purpose of the section.}
In this section, we discuss the poly-Bargmann transforms, a generalization of
the classical Bargmann transform, which are needed later when we analyze 
the Berezin density at a boundary point. The poly-Bargmann transforms appeared
in Vasilevski's paper \cite{vas}, where the basic properties were presented. 
%We aim at self-contained exposition. 
\medskip

\noindent{\bf The Hermite polynomials and the Bargmann transform.} 
We denote by $H_j$ the $j$-th Hermite polynomial with respect to the Gaussian 
weight $\e^{-\frac12t^2}$ (``probabilistic Hermite polynomials''). The 
generating function identity
\[ 
\e^{tz-\frac12z^2} = \sum_{j=0}^{+\infty} H_j(t)\frac{z^j}{j!}
\]
allows us to write
\[
\frac{1}{(2\pi)^{1/4}}\e^{zt-\frac12z^2-\frac14t^2}=  
\sum_{j=0}^{+\infty} \frac{H_j(t)\e^{-t^2/4}}{(2\pi)^{1/4}\sqrt{j!}}
\times\frac{z^j}{\sqrt{j!}}.
\]
We recall the standard definition of the Bargmann transform:
\[
\Bop[f](z)= \frac{1}{(2 \pi)^{1/4}} \int_{\R}
 \e^{zt-\frac12z^2-\frac14t^2}f(t)\,\diff t, \qquad
f \in L^2(\R). 
\]
As the function systems 
\[
\Big\{\frac{1}{(2\pi)^{1/4}\sqrt{j!}}H_j(t)\e^{-\frac14t^2} 
\Big\}_{j=0}^{+\infty}\quad\text{and}\quad \Big\{\frac{z^j}{\sqrt{j!}}
\Big\}_{j=0}^{+\infty}
\] 
form orthonormal bases for $L^2(\mathbb{R})$ and the Bargmann-Fock space 
$A^2_{1,1}(\C)$ (this is $A^2_{m,q}(\C)$ with $m=q=1$), respectively,  
we obtain the following well-known fact.
 
\begin{prop}
The Bargmann transform $\Bop:L^2(\R)\to A_{1,1}^2(\C)$ acts isometrically and
bijectively, and for each $j=0,1,2,\ldots$, the basis function
$(2\pi)^{-1/4}(j!)^{-1/2}H_j(t)\e^{-\frac14t^2}$ is mapped to the basis 
function $(j!)^{-1/2}z^j$. 
\end{prop}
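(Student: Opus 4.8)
The plan is to exploit the expansion of the Bargmann kernel displayed immediately before the statement. Writing $\phi_j(t) := (2\pi)^{-1/4}(j!)^{-1/2} H_j(t)\e^{-t^2/4}$ and $\psi_j(z) := (j!)^{-1/2} z^j$ for the two orthonormal systems, that identity says precisely that the kernel
$$B(z,t) := (2\pi)^{-1/4}\,\e^{zt - \frac12 z^2 - \frac14 t^2}$$
of $\Bop$ admits the bilinear expansion $B(z,t) = \sum_{j=0}^{\infty} \psi_j(z)\,\phi_j(t)$. The whole proof reduces to computing the image of each $\phi_k$ and then promoting the resulting orthonormal-basis-to-orthonormal-basis correspondence to a statement about $\Bop$ on all of $L^2(\R)$.

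First I would fix $z \in \C$ and observe that $t \mapsto B(z,t)$ lies in $L^2(\R)$, since $|B(z,t)| = (2\pi)^{-1/4}\e^{(\re z)t - \frac14 t^2 - \frac12\re(z^2)}$ has Gaussian decay. Because $\sum_j |\psi_j(z)|^2 = \sum_j |z|^{2j}/j! = \e^{|z|^2} < \infty$ and $\{\phi_j\}$ is orthonormal in $L^2(\R)$, the series $\sum_j \psi_j(z)\phi_j$ converges in $L^2(\R)$, and the generating-function identity identifies its sum with $B(z,\cdot)$. Integrating termwise against the real function $\phi_k$ and invoking orthonormality then yields
$$\Bop[\phi_k](z) = \int_\R B(z,t)\phi_k(t)\,\diff t = \sum_{j=0}^{\infty} \psi_j(z)\int_\R \phi_j(t)\phi_k(t)\,\diff t = \psi_k(z),$$
so that $\Bop[\phi_k] = \psi_k$ for every $k$, which is the asserted correspondence of basis functions.

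It remains to upgrade this to isometry and bijectivity. On the dense subspace of finite linear combinations of the $\phi_k$, linearity of $\Bop$ together with $\Bop[\phi_k]=\psi_k$ shows that $\Bop$ carries $\sum_k c_k\phi_k$ to $\sum_k c_k\psi_k$; since $\{\phi_k\}$ and $\{\psi_k\}$ are orthonormal bases of $L^2(\R)$ and $A^2_{1,1}(\C)$ respectively, this map is norm-preserving there, hence extends uniquely to an isometry $U$ onto $A^2_{1,1}(\C)$. To identify $U$ with $\Bop$, I would take $f\in L^2(\R)$ with partial sums $f_N \to f$ in $L^2(\R)$: on one hand $\Bop[f_N] = U[f_N] \to U[f]$ in $A^2_{1,1}(\C)$, while on the other hand, for each fixed $z$, $\Bop[f_N](z) = \int_\R B(z,t)f_N(t)\,\diff t \to \int_\R B(z,t)f(t)\,\diff t = \Bop[f](z)$ by Cauchy--Schwarz, using $B(z,\cdot)\in L^2(\R)$. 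Since $A^2_{1,1}(\C)$ is a reproducing-kernel space, convergence in its norm forces pointwise convergence, so the two limits agree and $\Bop[f] = U[f]$. Thus $\Bop = U$ is an isometric bijection, completing the proof.

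The main obstacle I anticipate is not any single computation but the careful bookkeeping of two distinct modes of convergence: the kernel expansion must be used as an $L^2(\R)$-identity in order to justify the termwise integration against $\phi_k$, whereas the passage from the dense subspace to all of $L^2(\R)$ requires reconciling norm convergence in $A^2_{1,1}(\C)$ with pointwise convergence via the reproducing-kernel property. Verifying that $\sum_j \psi_j(z)\phi_j$ genuinely converges to $B(z,\cdot)$ in $L^2(\R)$, and not merely pointwise in $t$, is the technical heart of the argument.
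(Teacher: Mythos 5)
Your proposal is correct and follows the same route the paper takes: the paper simply observes that the generating-function identity expands the Bargmann kernel bilinearly in the two orthonormal systems and then declares the proposition a ``well-known fact,'' which is exactly the basis-to-basis correspondence you establish. You have merely supplied the convergence bookkeeping (the $L^2(\R)$-convergence of $\sum_j\psi_j(z)\phi_j$ and the reconciliation of norm and pointwise limits via the reproducing kernel) that the paper leaves implicit, and those details are handled correctly.
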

\medskip

\noindent{\bf A class of auxiliary operators.}
Let $\partial_z,\bar\partial_z$ denote the standard Wirtinger differential 
operators
\[
\partial_z:=\tfrac12(\partial_x-\imag\partial_y),\,\,\,
\bar\partial_z:=\tfrac12(\partial_x+\imag\partial_y),
\quad\text{where}\,\,\, z=x+\imag y.
\]
For $r=0,1,2,\ldots$, we introduce the operator 
\begin{equation*}
\Tope_{r}[f](z):=\frac{1}{\sqrt{r!}} \e^{|z|^2}\,
\partial_z^r\big\{f(z)\e^{-|z|^2}\big\}, 
\end{equation*}
with the semi-group property $\Tope_1\circ\Tope_{r-1}=r^{1/2}\Tope_r$.
We also consider the dilated variant
\begin{equation*}
\Tope_{m,r}[f](z) = \Tope_r[f_{m^{-1/2}}](m^{1/2}z), 
\end{equation*}
where $f_{m^{-1/2}}(z)=f(m^{-1/2}z)$. It has the semi-group property 
$\Tope_{m,1}\circ\Tope_{m,r-1}=r^{1/2}\Tope_{m,r}$, and may be expressed
in more concrete terms:
\begin{equation*}
\Tope_{m,r}[f](z) = \frac{m^{-r/2}}{\sqrt{r!}} \e^{m|z|^2}\,
\partial_z^r\big\{f(z)\e^{-m|z|^2}\big\}.
\end{equation*}  
We now study the effect of $\Tope_r$ on the basis elements $(j!)^{-1/2}z^j$.

\begin{prop} 
\label{T_r-prop}
For $j\ge r$, we have
\[
\Tope_r\bigg[\frac{z^j}{\sqrt{j!}}\bigg]=
\frac{1}{\sqrt{r!}}\e^{|z|^2}
\partial^r_z\bigg\{\e^{-|z|^2}\frac{z^j}{\sqrt{j!}}\bigg\}= 
\sqrt{\frac{r!}{j!}}z^{j-r}L^{j-r}_r(|z|^2)
\]
while for $j \leq r$, 
\[
\Tope_r\bigg[\frac{z^j}{\sqrt{j!}}\bigg]=
\frac{1}{\sqrt{r!}}\e^{|z|^2}\partial^r_z\bigg\{\e^{-|z|^2}
\frac{z^j}{\sqrt{j!}}\bigg\}= 
(-1)^{r-j}\sqrt{\frac{n!}{r!}}\,\bar{z}^{r-j}L^{r-j}_j(|z|^2)
\]
\end{prop}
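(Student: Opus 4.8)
The plan is to exploit the fact that under the Wirtinger operator $\partial_z$ the antiholomorphic variable $\bar z$ is held fixed, so that the whole $r$-fold derivative collapses to an ordinary one-variable derivative. Concretely, I would set $w:=|z|^2=z\bar z$, treat $\bar z$ as a constant, and note that since $\partial_z w=\bar z$ one has $\partial_z^r\{g(w)\}=\bar z^{\,r}g^{(r)}(w)$ for any smooth $g$. Writing $z^j\e^{-|z|^2}=\bar z^{-j}\,w^j\e^{-w}$, this reduces the problem to a single ordinary derivative:
\[
\partial_z^r\big\{z^j\e^{-|z|^2}\big\}=\bar z^{\,r-j}\,\frac{\diff^r}{\diff w^r}\big(w^j\e^{-w}\big).
\]
From here the two regimes correspond exactly to whether the standard Rodrigues formula for the Laguerre polynomials is applicable.

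For $j\ge r$ I would invoke the classical Rodrigues formula $\frac{\diff^r}{\diff w^r}(\e^{-w}w^{r+\alpha})=r!\,w^{\alpha}\e^{-w}L^{\alpha}_r(w)$ with $\alpha=j-r\ge0$, giving $\frac{\diff^r}{\diff w^r}(w^j\e^{-w})=r!\,w^{j-r}\e^{-w}L^{j-r}_r(w)$. Substituting back and using the cancellation $\bar z^{\,r-j}w^{j-r}=z^{j-r}$ yields $\partial_z^r\{z^j\e^{-|z|^2}\}=r!\,z^{j-r}\e^{-|z|^2}L^{j-r}_r(|z|^2)$; multiplying by $\tfrac{1}{\sqrt{r!\,j!}}\,\e^{|z|^2}$ then produces the claimed value $\sqrt{r!/j!}\,z^{j-r}L^{j-r}_r(|z|^2)$.

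For $j\le r$ the Rodrigues formula in that form would require the forbidden negative parameter $\alpha=j-r$, so instead I would establish and use the companion identity $\frac{\diff^r}{\diff w^r}(\e^{-w}w^j)=(-1)^{r-j}j!\,\e^{-w}L^{r-j}_j(w)$, valid for $r\ge j$. This can be proved by an easy induction on $r-j$: the base case $r=j$ is again Rodrigues, and the inductive step differentiates once and simplifies the bracket using $\frac{\diff}{\diff w}L^{\alpha}_j=-L^{\alpha+1}_{j-1}$ together with $L^{\alpha}_j=L^{\alpha+1}_j-L^{\alpha+1}_{j-1}$, which collapses the two terms to $-L^{r-j+1}_j$. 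Inserting this and simplifying the prefactor ($\sqrt{r!\,j!}\cdot\sqrt{j!/r!}=j!$) gives $(-1)^{r-j}\sqrt{j!/r!}\,\bar z^{\,r-j}L^{r-j}_j(|z|^2)$, the asserted formula (so the exponent $n!$ printed in the statement is a typo for $j!$).

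The computation is essentially routine; the only genuinely delicate point will be the $j\le r$ regime, where the naive Rodrigues formula breaks down and one must supply the dual identity, keeping track of the sign $(-1)^{r-j}$ and the antiholomorphic monomial $\bar z^{\,r-j}$. Should one prefer to avoid any appeal to Rodrigues and keep everything self-contained, the Leibniz rule applied directly gives
\[
\partial_z^r\big\{z^j\e^{-|z|^2}\big\}=\e^{-|z|^2}\sum_{s=0}^{\min(r,j)}\binom{r}{s}\frac{j!}{(j-s)!}\,z^{j-s}(-\bar z)^{r-s},
\]
and after reindexing by the power $i$ of $|z|^2$ (taking $i=r-s$ when $j\ge r$ and $i=j-s$ when $j\le r$, and using $z^i\bar z^i=(|z|^2)^i$) the finite sum matches the explicit series $L^{\alpha}_k(x)=\sum_i(-1)^i\binom{k+\alpha}{k-i}x^i/i!$ term by term. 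In the regime $j\ge r$ this reduces to the elementary identity $\binom{r}{i}\frac{j!}{(j-r+i)!}=\binom{j}{r-i}\frac{r!}{i!}$ (both sides equal $\frac{r!\,j!}{i!\,(r-i)!\,(j-r+i)!}$), while in the regime $j\le r$ the two coefficients coincide by inspection.
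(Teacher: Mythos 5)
Your proposal is correct, but it follows a genuinely different route from the paper. The paper proves the formula by induction on $r$: it differentiates the $(r-1)$-st formula once more with $\partial_z$ and collapses the resulting bracket using the three-term recurrence $rL^{\alpha}_r(x)=(\alpha+1-x)L^{\alpha+1}_{r-1}(x)-xL^{\alpha+2}_{r-2}(x)$ in the regime $j\ge r$, and the recurrence $L^{\alpha}_{j-1}+L^{\alpha-1}_j=L^{\alpha}_j$ in the regime $j\le r-1$. You instead first reduce the $r$-fold Wirtinger derivative to a single one-variable derivative in $w=|z|^2$ (noting $\partial_z=\bar z\,\tfrac{\diff}{\diff w}$ on functions of $w$ with $\bar z$ frozen) and then quote the Rodrigues formula, supplemented by the companion identity $\tfrac{\diff^r}{\diff w^r}(\e^{-w}w^j)=(-1)^{r-j}j!\,\e^{-w}L^{r-j}_j(w)$ for $r\ge j$, which you prove by an induction of exactly the same flavor as the paper's, just transplanted to one variable. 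Your reduction makes it transparent \emph{why} Laguerre polynomials appear, and your Leibniz-rule variant is fully self-contained, verifying the explicit series for $L^{\alpha}_k$ coefficient by coefficient via the elementary identity $\binom{r}{i}\tfrac{j!}{(j-r+i)!}=\binom{j}{r-i}\tfrac{r!}{i!}$; the paper's induction is shorter but leans on two standard recurrences quoted without proof. Two small remarks: the substitution $z^j\e^{-|z|^2}=\bar z^{-j}w^j\e^{-w}$ divides by $\bar z^{\,j}$, so strictly you should either argue away from $z=0$ and extend by continuity (both sides being polynomials in $z,\bar z$ times $\e^{-|z|^2}$) or use the Leibniz version, which has no such issue; and you are right that the factor $\sqrt{n!/r!}$ in the statement is a misprint for $\sqrt{j!/r!}$, consistent with the normalization of the basis functions $e^2_{j,k}$ in Proposition \ref{on-kanta}.
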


\begin{proof}
The proof is based on an induction argument. 
The statement is obviously true for $r=0$ and all $j$. 
So, by induction, we assume that the statement holds for some 
$r-1\geq 0$ and all $j$. In case $j\ge r$, we then have
\begin{multline*}
\partial^r_z\{\e^{-|z|^2}z^j\}= 
\partial_z\big\{(r-1)!\,z^{j-r+1}L^{j-r+1}_{r-1}(|z|^2)\e^{-|z|^2} 
\big\}
\\
=(r-1)! z^{j-r}
\big\{(j-r+1)L^{j-r+1}_{r-1}(|z|^2)-L^{j-r+2}_{r-2}(|z|^2)|z|^2  
-|z|^2L^{j-r+1}_{r-1}(|z|^2)\big\}\e^{-|z|^2} 
\\
= r! L^{j-r}_r(|z|^2)z^{j-r}\e^{-|z|^2},
\end{multline*}
if we use the standard identity $rL^{\alpha}_r(x) = (\alpha + 1 -x) 
L^{\alpha+1}_{r-1}(x) - xL^{\alpha+2}_{r-2}(x)$.
Next, in case $j\leq r-1$, we have instead
\begin{multline*}
\partial^r_z\{\e^{-|z|^2}z^j\}= (-1)^{r-1-j} \partial 
\big\{
j!\,\bar{z}^{r-1-j}L^{r-1-j}_j(|z|^2)\e^{-|z|^2} 
\big\} 
\\
=(-1)^{r-j} j! \bar{z}^{r-j} \big\{L^{r-j}_{j-1}(|z|^2)+ 
L^{r-j-1}_{j}(|z|^2) \big\}\e^{-|z|^2}
=(-1)^{r-j} j!\,\bar{z}^{r-j}L^{r-j}_j (|z|^2)\e^{-|z|^2},
\end{multline*}
which completes the proof.
\end{proof}
\medskip

\noindent{\bf Pure poly-analytic Fock spaces.} 
Write $e_j(z):=(j!)^{-1/2}z^j$ for $j=0,1,2,\ldots$, which 
functions form the standard orthonormal basis for the space $A^2_{1,1}(\C)$.
The function $\Tope_r[e_j]\in A^2_{1,r+1}(\C)$ is then a polynomial in 
$z,\bar z$, where the degree in $z$ remains equal to $j$, and the degree in 
$\bar z$ equals $r$. For general positive $m$, the functions $e_{j,m}(z):=
(j!)^{-1/2}m^{\frac12(j+1)}z^j$, with $j=0,1,2,\ldots$, form an orthonormal
basis for the space $A^2_{m,1}(\C)$. The functions $\Tope_{m,r}[e_{j,m}]$
are computed using Proposition \ref{T_r-prop} above, and we then recognize
that we can identify them with the basis elements which appear in
Proposition \ref{on-kanta}. We clearly have that
\[
\mathrm{span}\big\{\Tope_{m,r}[e_{j,m}]:\,\, 0\le j\le n-1,\,\,0\le r\le q-1
\big\}=\mathrm{Pol}_{m,n,q},
\]
and in view of the orthogonality properties in Proposition \ref{on-kanta}, 
we also must have
\begin{equation}
\mathrm{span}\{\Tope_{m,r}[e_{j,m}]:\,\, 0\le j\le n-1\}
=\mathrm{Pol}_{m,n,r+1}\ominus \mathrm{Pol}_{m,n,r},
\label{eq-resspace1}
\end{equation}
where the ``$\ominus$'' is with respect to the inner product in 
$L^2(\C,\e^{-m|z|^2})$.
Following Vasilevski \cite{vas}, then, we define the pure polyanalytic Fock 
space of level $r+1$:
\begin{equation}
\Fspace_{m,n,r+1}:=\mathrm{Pol}_{m,n,r+1}\ominus \mathrm{Pol}_{m,n,r},\quad
\delta A^2_{m,n,r+1}(\C):=A^2_{m,r+1}(\C)\ominus A^2_{m,r}(\C),
\label{eq-resspace2}
\end{equation} 
with the understanding that $\mathrm{Pol}_{m,n,0}:=\{0\}$ and 
$A^2_{m,0}(\C):=\{0\}$. 
The operator $\Tope_{m,r}$ now becomes an isometric isomorphism 
\[
\Tope_{m,r}:A^2_{m,1}(\C)\to\delta A^2_{m,n,r+1}(\C),\quad
\Tope_{m,r}:\mathrm{Pol}_{m,n,1}\to\Fspace_{m,n,r+1}.
\]
We obtain the orthogonal decompositions
\begin{align*} 
\mathrm{Pol}_{m,n,\Rpar} &= \bigoplus_{r=0}^{\Rpar-1} \Fspace_{m,n,r+1},
& A^2_{m,\Rpar}(\C)=\bigoplus_{r=0}^{\Rpar-1} \delta A^2_{m,r+1}(\C).
\end{align*}
As a consequence, if $K_{\delta:m,n,r+1}$ and $K_{\delta:m,r+1}$ 
denote the reproducing kernels for the spaces $\Fspace_{m,n,r+1}$ and
$\delta A^2_{m,r+1}(\C)$, respectively, we must have that
\begin{align} 
\label{kernelsubspacedecomposition}
K_{m,n,\Rpar} &= \sum_{r=0}^{\Rpar-1} K_{\delta:{m,n,r+1}},
&  K_{m,\Rpar} &=\sum_{r=0}^{\Rpar-1} K_{\delta:{m,r}}.
\end{align}
\medskip

\noindent{\bf The poly-Bargmann transforms.} For $m=1$, the poly-Bargmann 
transform of level $r$ is defined by
\begin{align*}
\Bop_r[f]:= \Tope_r\circ\Bop[f].
\end{align*}

\begin{prop}
\label{prop-Br}
We have
\[ 
\Bop_r[f](z)= 
\frac{1}{(2\pi)^{1/4}\sqrt{r!}} \int_{\R}f(t)H_r(t-z-\bar{z})
\,\e^{tz-\frac12z^2-\frac14t^2} \diff t,\qquad r=0,1,2,\ldots.
\]
\end{prop}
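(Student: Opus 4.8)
The plan is to compute $\Bop_r[f]=\Tope_r\circ\Bop[f]$ directly from the definitions by differentiating the Bargmann kernel. Writing out $\Tope_r$, we have
\[
\Bop_r[f](z)=\frac{1}{\sqrt{r!}}\,\e^{|z|^2}\,\partial_z^r\Big\{\Bop[f](z)\,\e^{-|z|^2}\Big\},
\]
and since $\Bop[f](z)\,\e^{-|z|^2}=(2\pi)^{-1/4}\int_\R f(t)\,\e^{zt-\frac12z^2-\frac14t^2-z\bar z}\,\diff t$, the entire $z$-dependence sits in the exponential inside the integral. Treating $z$ and $\bar z$ as independent in the Wirtinger calculus (so that $\partial_z\bar z=0$), I would collect the exponent as $z(t-\bar z)-\frac12z^2$ together with the $z$-free term $-\frac14t^2$, so that after moving $\partial_z^r$ under the integral the task reduces to evaluating $\partial_z^r\,\e^{sz-\frac12z^2}$ for the parameter $s:=t-\bar z$.

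The key computation is the identity
\[
\partial_z^r\,\e^{sz-\frac12z^2}=H_r(s-z)\,\e^{sz-\frac12z^2}.
\]
To establish it, complete the square as $\e^{sz-\frac12z^2}=\e^{\frac12s^2}\e^{-\frac12(z-s)^2}$ and apply the Rodrigues formula $\frac{\diff^r}{\diff u^r}\e^{-\frac12u^2}=(-1)^rH_r(u)\e^{-\frac12u^2}$ with $u=z-s$. This yields $\partial_z^r\,\e^{sz-\frac12z^2}=(-1)^rH_r(z-s)\,\e^{sz-\frac12z^2}$, and the parity relation $H_r(-x)=(-1)^rH_r(x)$ for the probabilist's Hermite polynomials converts $(-1)^rH_r(z-s)$ into $H_r(s-z)$. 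Substituting $s=t-\bar z$ gives $s-z=t-z-\bar z$, which is precisely the argument appearing in the claimed formula.

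Assembling the pieces, differentiation under the integral sign produces
\[
\partial_z^r\Big\{\Bop[f](z)\,\e^{-|z|^2}\Big\}=(2\pi)^{-1/4}\int_\R f(t)\,H_r(t-z-\bar z)\,\e^{zt-\frac12z^2-\frac14t^2-z\bar z}\,\diff t,
\]
and multiplying by $(r!)^{-1/2}\e^{|z|^2}=(r!)^{-1/2}\e^{z\bar z}$ cancels the factor $\e^{-z\bar z}$ and reproduces the asserted expression for $\Bop_r[f]$. The one point requiring care, and the main technical obstacle, is the justification for passing all $r$ derivatives under the integral when $f$ is merely in $L^2(\R)$: for fixed $z$ the $t$-kernel $H_r(t-z-\bar z)\,\e^{zt-\frac14t^2}$ is, up to the polynomial factor, a Gaussian in $t$ and hence lies in $L^2(\R)$, so Cauchy--Schwarz bounds the integrand by an $L^1(\R)$ function locally uniformly in $z$, and the same bound applies to each intermediate $z$-derivative. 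This analytic step is routine; the genuine content of the proposition is the Hermite identity of the second paragraph. (Alternatively, one could verify the formula on the orthonormal basis $e_j$ via Proposition \ref{T_r-prop} and resum, but the direct differentiation is cleaner.)
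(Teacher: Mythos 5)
Your proof is correct, but it takes a different route from the paper's. The paper argues by induction on $r$: it uses the semi-group relation $\Bop_r=r^{-1/2}\Tope_1\circ\Bop_{r-1}$, applies a single $\partial_z$ to the integrand of the formula at level $r-1$, and invokes the recurrence $H_r(x)=xH_{r-1}(x)-H_{r-1}'(x)$ to climb one step. You instead apply all $r$ derivatives at once to the $r=0$ integrand and package the result in the closed-form identity $\partial_z^r\,\e^{sz-\frac12z^2}=H_r(s-z)\,\e^{sz-\frac12z^2}$, obtained from the Rodrigues formula together with the parity of the probabilist's Hermite polynomials; this is consistent with the paper's generating-function definition of $H_j$ (indeed the identity follows directly from that generating function by expanding $\e^{s(z+h)-\frac12(z+h)^2}$ in powers of $h$). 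The two arguments are of course equivalent -- iterating the paper's inductive step unrolls to exactly your closed-form derivative -- but yours makes the appearance of $H_r(t-z-\bar z)$ visible in one stroke rather than emerging term by term, at the cost of having to justify moving $r$ derivatives under the integral for $f\in L^2(\R)$, which you correctly dispose of by a Cauchy--Schwarz domination with a Gaussian-times-polynomial kernel, locally uniformly in $z$. The paper's induction quietly faces the same interchange issue at each step, so nothing is lost; both proofs are complete and the choice is a matter of taste.
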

\begin{proof}
We proceed by induction. For $r=0$, the formula reduces to the usual Bargmann
transform (if we recall that $H_0=1$). By the induction hypothesis, we suppose 
therefore that the formula is valid for all integers up to $r-1$. 
From the semi-group property $\Tope_1\circ\Tope_{r-1}=r^{1/2}\Tope_r$, 
we see that   
\[ 
\Bop_r[f]=\Tope_r\circ\Bop[f]=
r^{-1/2}\Tope_1\circ\Tope_{r-1}\circ\Bop[f](z)= 
r^{-1/2}\Tope_1\circ\Bop_{r-1}[f].
\]
Since the formula holds for $r-1$, the following calculation shows that it
holds for $r$ as well:
\begin{multline*}
r^{-1/2}\e^{|z|^2} 
\partial_z 
\bigg([(r-1)!]^{-1/2}H_{r-1}(t-z- \bar{z})
\,\e^{tz-\frac12z^2-\frac14t^2}\e^{-|z|^2}\bigg)  
\\
= (r!)^{-1/2}\left(-H'_{r-1}(t-z-\bar{z})+ 
(t-z- \overline{z})H_{r-1}(t-z-\bar{z}) \right)
\,\e^{tz-\frac12z^2-\frac14t^2}
\\
= (r!)^{-1/2} H_r(t-z-\bar{z})\e^{tz-\frac12z^2-\frac14t^2}; 
\end{multline*}
here, we used the standard identity $H_{r}(x)=xH_{r-1}(x)-H'_{r-1}(x)$. 
The proof is complete. 
\end{proof}

\section{Reproducing kernel and Berezin density asymptotics for a boundary 
point}

\noindent{\bf Purpose of the section.}
In this section, we will calculate the limit of the blow-up berezin transform 
$\hat{\berd}^{\langle z\rangle}_{m,n,\Rpar}$ at a boundary point $z$, 
that is, $|z|=1$. 
There is no loss of generality to take $z=1$. 
Our strategy is to investigate the blow-up of the reproducing kernel of 
the space of analytic polynomials $\mathrm{Pol}_{m,n,1}$ (with $q=1$) first, 
and then use this information together with poly-Bargmann transform to lift the
the asymptotics to the context of the general polyanalytic spaces 
$\mathrm{Pol}_{m,n,\Rpar}$. 
\medskip

\noindent{\bf The central limit theorem revisited.}
The following improvement of the central 
limit theorem will be needed. Let 
$\mathrm{cdf}_X$ denote the cumulative distribution function 
of a real-valued random variable $X$ and let 
\[ 
\mathrm{erf}(z) = \frac{1}{\sqrt{2 \pi}} 
\int_{- \infty}^z \e^{-\frac12t^2} \diff t 
\]
be the error function. We shall write i.i.d. as shorthand for {\em 
independent identically distributed} in the context of random variables.
The following result is from \cite{berry}, \cite{esseen}.

\begin{thm}[Berry-Ess\'een]
Let $X_1, X_2, ...$ be i.i.d. real-valued random variables 
with $E(X_j)=0$, $E(X_j^2)=1$ and $E(|X_j|^3)=\rho<+\infty$
for all $j$. Also, let $Y_n = n^{-1/2}\sum_{j=1}^{n} X_j$.
Then there exists an absolute constant $C$ such that 
\[ 
\left| \mathrm{cdf}_{Y_n}(x) - \mathrm{erf}(x) \right| 
\leq \frac{C \rho}{\sqrt{n}}, \qquad x\in\R.
\]
\end{thm}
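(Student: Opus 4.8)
The plan is to follow the classical Fourier-analytic route via characteristic functions, combined with Esseen's smoothing inequality. Write $\varphi(t):=E(\e^{\imag t X_1})$ for the common characteristic function of the $X_j$; by independence and identical distribution, the characteristic function of $Y_n$ equals $\varphi_n(t):=\varphi(t/\sqrt n)^n$, whereas the standard normal distribution behind $\mathrm{erf}$ has characteristic function $\e^{-t^2/2}$. The whole point is to control the uniform distance between the two distribution functions by an integral of the difference of their characteristic functions, and then to estimate that difference using the moment hypotheses.

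First I would invoke Esseen's smoothing lemma: for any $T>0$, since $\mathrm{erf}$ has bounded derivative (here $|\mathrm{erf}'|\le(2\pi)^{-1/2}$),
\[
\sup_{x\in\R}\big|\mathrm{cdf}_{Y_n}(x)-\mathrm{erf}(x)\big|
\le\frac1\pi\int_{-T}^{T}\bigg|\frac{\varphi_n(t)-\e^{-t^2/2}}{t}\bigg|\,\diff t
+\frac{C_0}{T},
\]
for an absolute constant $C_0$. This reduces the theorem to a pointwise estimate of $\varphi_n(t)-\e^{-t^2/2}$ on a window $|t|\le T$, with the free parameter $T$ to be optimized at the very end.

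Next I would Taylor-expand $\varphi$. The moment conditions $E(X_1)=0$, $E(X_1^2)=1$, $E(|X_1|^3)=\rho$, applied to $\e^{\imag s X_1}$ via the integral form of Taylor's theorem, give
\[
\big|\varphi(s)-\big(1-\tfrac12 s^2\big)\big|\le\tfrac16\rho|s|^3,\qquad s\in\R.
\]
Observe also that $\rho\ge E(X_1^2)^{3/2}=1$ by Jensen's inequality, so it suffices to treat the regime $\sqrt n>C\rho$ (otherwise the asserted right-hand side already exceeds $1$ and the claim is trivial, both terms lying in $[0,1]$). I would then fix the window $T:=\sqrt n/(C_1\rho)$ for a suitable absolute constant $C_1$, chosen small enough that on $|t|\le T$ the argument $s=t/\sqrt n$ forces $|\varphi(t/\sqrt n)|\le\e^{-t^2/(3n)}$, and hence $|\varphi_n(t)|\le\e^{-t^2/3}$.

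The main obstacle, and the step demanding the most care, is passing from this one-step bound to the $n$-fold power. Using the elementary inequality $|a^n-b^n|\le n\max(|a|,|b|)^{n-1}|a-b|$ with $a=\varphi(t/\sqrt n)$ and $b=\e^{-t^2/(2n)}$, together with the Taylor estimate and the exponential control on both $|a|$ and $|b|$, one derives a bound of the shape
\[
\big|\varphi_n(t)-\e^{-t^2/2}\big|\le C_2\,\frac{\rho\,|t|^3}{\sqrt n}\,\e^{-t^2/3},
\qquad |t|\le T.
\]
Dividing by $|t|$ and integrating (the Gaussian factor renders the tail harmless, so the restriction $|t|\le T$ may be dropped in the integral) produces a contribution of order $\rho/\sqrt n$, while the smoothing remainder $C_0/T=C_0C_1\rho/\sqrt n$ is of the same order. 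Collecting the two contributions gives $C\rho/\sqrt n$ with $C$ absolute. The delicate bookkeeping lies entirely in making the power estimate uniform across the whole window $|t|\le T$ while keeping every constant absolute and independent of the underlying distribution.
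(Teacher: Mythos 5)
The paper offers no proof of this statement: it is the classical Berry--Ess\'een theorem, quoted with citations to the original papers of Berry and Ess\'een and then used as a black box to derive the uniform asymptotics of $\e^{-x}E_{n-1}(x)$ in Lemma \ref{erflemma}. So there is no internal argument to compare yours against; what you have reconstructed is, in substance, Ess\'een's own Fourier-analytic proof (smoothing inequality, third-order Taylor expansion of the characteristic function, a frequency window $|t|\le T\asymp\sqrt n/\rho$, the telescoping bound $|a^n-b^n|\le n\max(|a|,|b|)^{n-1}|a-b|$, and optimization over $T$), and the structure is correct and complete at the level of a sketch. Two small points deserve care if you write it out in full. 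First, in $\max(|a|,|b|)^{n-1}$ you need the Gaussian decay for $b=\e^{-t^2/(2n)}$ as well as for $a=\varphi(t/\sqrt n)$; the exponent you actually get is $\e^{-(n-1)t^2/(3n)}$, i.e.\ $\e^{-t^2/6}$ for $n\ge2$ rather than $\e^{-t^2/3}$ --- harmless, since only integrability against $|t|^2$ is needed, but the constant in the exponent should be stated correctly. Second, in bounding $|a-b|$ you compare both $a$ and $b$ to $1-\tfrac{t^2}{2n}$, which produces an extra term $|\e^{-t^2/(2n)}-(1-\tfrac{t^2}{2n})|\le t^4/(8n^2)$; this must be absorbed into the main term $\rho|t|^3/n^{3/2}$ using $|t|\le T$ together with $\rho\ge1$, exactly as you absorb the smoothing remainder $C_0/T$. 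Neither point is a gap, only bookkeeping of the kind you already flagged.
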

 
The Berry-Ess\'een theorem gives the following asymptotics for the partial
Taylor sums of the exponential function. 
 
\begin{lem} 
\label{erflemma}
We have
\[ 
\frac{E_{n-1}(x)}{\e^x} = \mathrm{erf}\bigg(\frac{n-x}{\sqrt{n}}\bigg) 
+ \Ordo(n^{-1/2}), 
\]
as $n\to+\infty$, uniformly in $x\in[0,+\infty[$.
\end{lem}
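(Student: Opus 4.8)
The plan is to interpret the normalized partial exponential sum as a Poisson distribution function and then invoke the Berry--Ess\'een theorem. First I would note that if $S$ is a Poisson random variable with parameter $x$, then $P(S=j)=\e^{-x}x^j/j!$, so that
\[
\frac{E_{n-1}(x)}{\e^{x}}=\sum_{j=0}^{n-1}\e^{-x}\frac{x^{j}}{j!}=P(S\le n-1)=\mathrm{cdf}_{S}(n-1).
\]
Thus the lemma reduces to a uniform normal approximation of the Poisson law, where the approximating Gaussian has the matching mean $x$ but the ``wrong'' variance $n$ (and the cut at $n$ rather than $n-1$ and the scaling $\sqrt n$ rather than $\sqrt x$). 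Reconciling these discrepancies is where the work lies.

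The central step is Berry--Ess\'een. Since the theorem as quoted is for i.i.d. sums, and hence for an integer number of summands, I would accommodate arbitrary real $x>0$ by infinite divisibility: for each integer $N$ the variable $S$ has the law of a sum of $N$ i.i.d.\ Poisson$(x/N)$ variables. Centering and rescaling each summand to unit variance turns the normalized sum into $(S-x)/\sqrt x$ (independent of $N$), so Berry--Ess\'een evaluated at $y=(n-1-x)/\sqrt x$ gives $|\mathrm{cdf}_{S}(n-1)-\mathrm{erf}((n-1-x)/\sqrt x)|\le C\rho_{N}/\sqrt N$. A short computation, using that the third absolute central moment of Poisson$(\lambda)$ equals $\lambda+\ordo(\lambda)$ as $\lambda\to0$, shows $\rho_{N}/\sqrt N\to x^{-1/2}$ as $N\to\infty$; letting $N\to\infty$ yields the clean bound $\Ordo(x^{-1/2})$ for every real $x>0$. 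This is useful precisely when $x\asymp n$, so I would split the range of $x$ at, say, $x=\tfrac12 n$.

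For $x\le\tfrac12 n$ I would avoid Berry--Ess\'een entirely: here $(n-x)/\sqrt n\ge\tfrac12\sqrt n$, so $\mathrm{erf}((n-x)/\sqrt n)=1-\Ordo(\e^{-n/8})$, while $1-E_{n-1}(x)/\e^{x}=R_{n-1}(x)/\e^{x}=P(S\ge n)$ is exponentially small by a Poisson (Chernoff) upper-tail bound; both quantities are therefore within $\Ordo(n^{-1/2})$ of $1$. For $x>\tfrac12 n$ the Berry--Ess\'een error is already $\Ordo(n^{-1/2})$, and it remains only to pass from $\mathrm{erf}((n-1-x)/\sqrt x)$ to the target $\mathrm{erf}((n-x)/\sqrt n)$. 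Writing $u:=n-x$ and Taylor expanding, the two error-function arguments differ by $-n^{-1/2}+\tfrac12 u^{2}n^{-3/2}+\cdots$.

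The hard part will be controlling this reconciliation uniformly without losing a logarithmic factor: the crude bound $|\mathrm{erf}(a)-\mathrm{erf}(b)|\le(2\pi)^{-1/2}|a-b|$ applied to the term $\tfrac12 u^{2}n^{-3/2}$ gives $\Ordo(n^{-1/2})$ only for $|u|\lesssim\sqrt n$ and degrades to $\Ordo(n^{-1/2}\log n)$ across the moderate-deviation range. I would instead use the sharpened estimate $|\mathrm{erf}(a)-\mathrm{erf}(b)|\le(2\pi)^{-1/2}\e^{-\min(a,b)^{2}/2}|a-b|$, valid when $a,b$ share a sign; since both arguments have size comparable to $|u|/\sqrt n$ in this range, the Gaussian factor supplies $\e^{-cu^{2}/n}$, and then $u^{2}n^{-3/2}\e^{-cu^{2}/n}=n^{-1/2}\cdot(u^{2}/n)\e^{-cu^{2}/n}=\Ordo(n^{-1/2})$ uniformly, because $s\mapsto s\,\e^{-cs}$ is bounded. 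Near $u=0$ the arguments are both $\Ordo(n^{-1/2})$ and the crude bound already suffices, while for $x$ far above $n$ both error functions are within $\Ordo(n^{-1/2})$ of $0$. Collecting the three regimes delivers the claimed uniform $\Ordo(n^{-1/2})$ bound.
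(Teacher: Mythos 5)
Your argument is correct, but it works on the opposite side of a classical duality from the paper, and pays for it. You read $\e^{-x}E_{n-1}(x)$ as $P(S\le n-1)$ for $S\sim\mathrm{Poisson}(x)$; the paper instead uses the dual identity $1-\e^{-x}E_{n-1}(x)=P(X_1+\cdots+X_n\le x)$, where the $X_j$ are i.i.d.\ standard exponentials (the Erlang/gamma CDF). The latter representation is tailor-made for the i.i.d.\ Berry--Ess\'een theorem as stated: there are exactly $n$ summands, each of mean $1$ and variance $1$, so the normalized sum is $(\sum X_j-n)/\sqrt n$ and the theorem delivers $1-\e^{-x}E_{n-1}(x)=\mathrm{erf}((x-n)/\sqrt n)+\Ordo(n^{-1/2})$ uniformly in $x$ in a single step --- the centering at $n$ and the scaling by $\sqrt n$ in the target expression come out exactly, with no reconciliation. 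Your Poisson route forces three extra layers of work: (i) infinite divisibility to fit a non-integer ``number of summands'' into the i.i.d.\ framework, with the third-moment computation $\rho_N/\sqrt N\to x^{-1/2}$; (ii) a split of the $x$-range, since the resulting error is $\Ordo(x^{-1/2})$ rather than $\Ordo(n^{-1/2})$ and must be supplemented by a Chernoff tail bound for $x\le\tfrac12 n$; and (iii) the passage from $\mathrm{erf}((n-1-x)/\sqrt x)$ to $\mathrm{erf}((n-x)/\sqrt n)$, where your Gaussian-weighted Lipschitz estimate is genuinely needed to avoid a spurious $\log n$ in the moderate-deviation range. Each of these steps is sound as you sketch it (the sign caveat near $u=0$ and the separate treatment of $x\gg n$ are correctly flagged), so the proof goes through; but if you notice that the complementary probability $P(S\ge n)$ is exactly the gamma CDF of a sum of $n$ unit exponentials, the entire lemma is three lines.
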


\begin{proof}
Let $X_1,...,X_n$ be independent exponentially distributed random variables 
on $[0,+\infty[$ with density $\e^{-x}$. It is well known that the 
sum $\sum_{j=1}^nX_j$ obeys a gamma distribution with the 
cumulative distribution function $1- \e^{-x}E_{n-1}(x)$. 
The random variables $X_1-1, ..., X_n-1$ all have zero mean and 
variance $1$, and the third moment is finite, so by the Berry-Ess\'een 
theorem, we have  
\begin{equation}
1-\e^{-x}E_{n-1}(x)=\text{cdf}_{\sum_{j=1}^n X_j}(x) 
=\mathrm{cdf}_{\frac{\sum_{j=1}^n X_j -n }{\sqrt{n}}}
\bigg(\frac{x-n}{\sqrt{n}}\bigg) 
=\mathrm{erf}\bigg(\frac{x-n}{\sqrt{n}}\bigg)+\Ordo(n^{-1/2}), 
\end{equation}
where the ``$\Ordo$'' term is uniform in $x$ as $n\to+\infty$. 
\end{proof}

This allows us to blow up the function $E_{n-1}(mz\bar w)$ when $z$ and $w$
are close to the point $1$ and $m,n\to+\infty$ with $n=m+\Ordo(1)$.
 
\begin{lem} \label{analyticbdrblowup}
Fix a positive real $\varepsilon$. For complex $\xi,\eta\in\C$, we then have
\[ 
\frac{E_{n-1}\left(m(1+m^{-1/2}\xi)(1+ m^{-1/2}\bar{\eta})\right)}
{\e^{m+ \sqrt{m}(\xi+\bar\eta)}} = \e^{\xi\bar{\eta}}
\,\mathrm{erf}(-\xi- \bar\eta)+\mathrm{O}(m^{-\frac12+\varepsilon}), 
\]
as $m,n\to+\infty$ while $n=m+\Ordo(1)$. 
Here, the ``$\Ordo$'' expression on the right-hand side is uniform on compact 
subsets of $\C$. 
\end{lem}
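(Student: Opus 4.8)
The plan is to reduce the statement to the real-variable estimate of Lemma~\ref{erflemma} by an algebraic substitution, and then to upgrade that real estimate to complex arguments by a complex-analytic (two-constants) argument. First I would set $x := m(1+m^{-1/2}\xi)(1+m^{-1/2}\bar\eta)$. Expanding gives $x = m + \sqrt m\,(\xi+\bar\eta) + \xi\bar\eta$, so that $\e^{m+\sqrt m(\xi+\bar\eta)} = \e^{-\xi\bar\eta}\e^{x}$ and the left-hand side of the lemma becomes exactly $\e^{\xi\bar\eta}\,\e^{-x}E_{n-1}(x)$. Since $\e^{\xi\bar\eta}$ is bounded on compact sets, it suffices to prove
\[
\e^{-x}E_{n-1}(x) = \mathrm{erf}(-\xi-\bar\eta) + \Ordo(m^{-1/2+\varepsilon}).
\]
Using $n = m + \Ordo(1)$, a direct computation gives $(n-x)/\sqrt n = -(\xi+\bar\eta) + \Ordo(m^{-1/2})$ uniformly on compacts; as $\mathrm{erf}$ is entire, hence Lipschitz on compacts, I may replace $\mathrm{erf}(-\xi-\bar\eta)$ by $\mathrm{erf}\big((n-x)/\sqrt n\big)$ at the cost of an $\Ordo(m^{-1/2})$ error. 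Everything therefore reduces to the complex extension of Lemma~\ref{erflemma}, namely that the entire function
\[
F_n(x) := \e^{-x}E_{n-1}(x) - \mathrm{erf}\Big(\tfrac{n-x}{\sqrt n}\Big)
\]
is $\Ordo(m^{-1/2+\varepsilon})$ for the relevant complex $x$, which satisfy $|x-n| = \Ordo(\sqrt m)$ and $\re x \ge n - \Ordo(\sqrt m)$.

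The main obstacle is precisely this last step, because Lemma~\ref{erflemma}, being a Berry--Ess\'een estimate, is intrinsically real. I would treat $F_n$ as an entire function of $x$ and run a two-constants argument on the half-disk $\Omega := \{x : |x-n|<R,\ \im x>0\}$ with $R := C\sqrt n$. On the diameter $I := [n-R,n+R]\subset[0,+\infty[$, Lemma~\ref{erflemma} gives $|F_n|\le A n^{-1/2}$, while on the upper semicircle one has the crude uniform bound $|F_n|\le B$: indeed $|\e^{-x}E_{n-1}(x)| \le \e^{|x|-\re x}$ with $|x|-\re x \le R^2/(2(n-R)) = \Ordo(1)$ for $|x-n|\le R$, and $\mathrm{erf}$ is bounded on the bounded set $\{\,|(n-x)/\sqrt n|\le C\,\}$. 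By the two-constants theorem,
\[
|F_n(x)| \le (A n^{-1/2})^{\omega}\,B^{\,1-\omega},
\]
where $\omega=\omega(x,I,\Omega)$ is the harmonic measure of the diameter. For the target points $\im x = \Ordo(\sqrt m)$ and $\re x - n = \Ordo(\sqrt m)$, so after rescaling by $R$ both coordinates are $\Ordo(1/C)$; hence by taking the constant $C$ large (depending on $\varepsilon$ and on the compact set of $\xi,\eta$) the harmonic measure $\omega$ can be pushed above $1-2\varepsilon$, uniformly over the target region. This yields $(n^{-1/2})^{\omega}\le n^{-1/2+\varepsilon}$, while $A^{\omega}B^{1-\omega}=\Ordo_\varepsilon(1)$, so that $|F_n(x)| = \Ordo(m^{-1/2+\varepsilon})$ (recall $n\asymp m$). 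The $\varepsilon$-loss is exactly the deficit of $\omega$ from $1$.

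A technically cleaner but computationally heavier alternative to the last step would be to start from the incomplete-gamma representation $\e^{-x}E_{n-1}(x) = \frac{1}{(n-1)!}\int_x^{+\infty} t^{n-1}\e^{-t}\,\diff t$ and carry out a steepest-descent analysis through the saddle at $t=n-1$; this produces the error function directly for complex $x$, but one then has to control the subleading terms carefully to recover the $m^{-1/2+\varepsilon}$ rate. I would prefer the two-constants route, since it reuses Lemma~\ref{erflemma} verbatim on the real axis and isolates the only genuinely new input — the passage from real to complex arguments — into a single harmonic-measure estimate.
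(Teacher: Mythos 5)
Your argument is correct and follows essentially the same route as the paper: the same algebraic substitution, reduction to the real-axis Berry--Ess\'een estimate of Lemma \ref{erflemma}, an a priori bound coming from $|E_{n-1}(x)|\le E_{n-1}(|x|)\le\e^{|x|}$, and a two-constants/harmonic-measure interpolation to pass from real to complex arguments. The only organizational difference is that you collapse the problem into the single complex variable $x$ (replacing $\mathrm{erf}(-\xi-\bar\eta)$ by $\mathrm{erf}\big((n-x)/\sqrt{n}\big)$ at an $\Ordo(m^{-1/2})$ cost), so the harmonic-measure step is performed once on a half-disk of radius $C\sqrt{n}$ rather than once in each of the variables $\xi$ and $\eta$ as in the paper; note only that the relevant $x$ also occupy the lower half-disk, which is covered by the symmetry $F_n(\bar x)=\overline{F_n(x)}$.
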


\begin{proof}
If we put 
\[
\zeta:=m(1+m^{-1/2}\xi)(1+m^{-1/2}\bar\eta)=m+m^{1/2}
(\xi+\bar\eta)+\xi\bar\eta,
\]
then 
\[
\frac{n-\zeta}{\sqrt{n}}=\frac{n-m-m^{1/2}(\xi+\bar\eta)-\xi\bar\eta}{\sqrt{n}}
=-\xi-\bar\eta+\Ordo(m^{-1/2}),
\]
where the ``O'' term is uniform on compact subsets. If we use that $E_{n-1}$
has only nonnegative Taylor coefficients, we get from Lemma \ref{erflemma}
that
\begin{equation}
\label{eq-t1}
\frac{|E_{n-1}(\zeta)|}{\e^{|\zeta|}}\le \frac{E_{n-1}(|\zeta|)}{\e^{|\zeta|}}
=\mathrm{erf}\bigg(\frac{n-|\zeta|}{\sqrt{n}}\bigg) 
+\Ordo(n^{-1/2}),
\end{equation}
with a uniform ``O'' term. So, for {\em real} $\xi,\eta$, we may deduce from
\eqref{eq-t1} the assertion of the lemma with $\varepsilon=0$. For general 
complex $\xi,\eta$, we note that
\begin{multline*}
|\zeta|=m\,|(1+m^{-1/2}\xi)(1+m^{-1/2}\bar\eta)|
\\
=m+m^{1/2}\re[\xi+\eta]+
\re\xi\re\eta+\tfrac12(\im\xi)^2+\tfrac12(\im\eta)^2+\Ordo(m^{-1/8})
\end{multline*}
uniformly in the domain where $\max\{|\xi|,|\eta|\}\le m^{1/8}$. As the
right-hand side of \eqref{eq-t1} is $\le\frac32$ for big $n$, we see that
\begin{multline*} 
\Bigg|\frac{E_{n-1}\left(m(1+m^{-1/2}\xi)(1+ m^{-1/2}{\eta})\right)}
{\e^{m+ \sqrt{m}(\xi+\eta)+\xi\bar\eta}}\Bigg|
\le\frac32\e^{-(\im\xi)(\im\eta)+\tfrac12(\im\xi)^2+\tfrac12(\im\eta)^2+
\Ordo(m^{-1/8})}
\\
\le2\e^{\frac12[\im\xi-\im\eta]^2} 
\end{multline*}
holds in the domain where $\max\{|\xi|,|\eta|\}\le m^{1/8}$, provided $m$
is big enough.
We need to show that he difference
\[ 
F_{m,n}(\xi,\eta):=\frac{E_{n-1}\left(m(1+m^{-1/2}{\xi})
(1+m^{-1/2}\bar\eta)\right)}{\e^{m+ \sqrt{m}(\xi+\bar\eta)+\xi\bar\eta}} - 
\mathrm{erf}(-\xi-\bar\eta) 
\]
is of order $\Ordo(m^{-1/2+\varepsilon})$ uniformly as $\xi,\eta$ remain 
confined to some compact subset of $\C$. We know that
$F_{m,n}(\xi,\eta)=\Ordo(m^{-1/2})$ uniformly when $\xi,\eta\in\R$ with
confined to $\max\{|\xi|,|\eta|\}\le m^{1/8}$. In view of the calculation we 
just made, we also have a good uniform estimate of $F_{m,n}(\xi,\eta)$ when 
$\xi,\eta\in\C$ with $\max\{|\xi|,|\eta|\}\le m^{1/8}$. By employing a 
standard technique involving the subharmonicity of 
$\xi\mapsto\log|F_{m,n}(\xi,\eta)|$, and certain classical estimates 
of harmonic measure, we can show that 
$F_{m,n}(\xi,\eta)=\Ordo(m^{\epsilon-1/2})$ holds uniformly when 
$\xi,\eta\in\C$ belong to a compact subset of $\C$, and in addition, 
$\eta\in\R$. Here, $\epsilon$ is a positive number which we can get as small 
as we like. A similar argument with $\eta$ in place $\xi$ worsens the 
control to $F_{m,n}(\xi,\eta)=\Ordo(m^{2\epsilon-1/2})$, but now the control 
is uniform when both $\xi,\eta$ are both complex and confined to some compact
subset. The proof is complete with $\varepsilon=2\epsilon$. 
\end{proof}
\medskip

\noindent{\bf The reproducing kernel for a subspace of the Fock space.}
We shall identify both the right-hand and the left-hand side expressions 
appearing in Lemma \ref{analyticbdrblowup} with reproducing kernels of 
certain Hilbert spaces of entire functions. This will be the case $r=0$
of the proposition below.

Let us agree to identify 
\[ 
L^2(\R_-) =\{ f\in L^2(\mathbb{R}):\,\, f(x)=0 \,\,\text{ for }\,\, x>0\}.
\]

\begin{prop} 
\label{polybargmannkernel}
For $r=0,1,2,\ldots$, the function 
\[ 
(\xi,\eta)\mapsto\frac{\e^{\xi\bar\eta}}{r! \sqrt{2 \pi}}
\int_{-\infty}^{-\xi-\bar\eta} H_r(t+\xi-\eta)
H_r(t+\bar\eta-\bar\xi)\e^{-\frac12t^2} \diff t  
\]
 is the reproducing kernel for the Hilbert space $\Bop_r[L^2(\R_-)]\subset 
A^2_{1,r+1}(\C)$.
\end{prop}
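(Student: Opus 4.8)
The plan is to exploit that $\Bop_r=\Tope_r\circ\Bop$ is a composition of two isometries, hence an isometric isomorphism from $L^2(\R)$ onto the pure polyanalytic Fock space $\delta A^2_{1,r+1}(\C)$. Restricting to the closed subspace $L^2(\R_-)$ turns it into an isometry onto $\Bop_r[L^2(\R_-)]$, which is a closed subspace of $A^2_{1,r+1}(\C)$ and therefore itself a reproducing kernel Hilbert space (the space carries the reproducing kernel $K_{\delta:1,r+1}$, so point evaluations are bounded). The kernel is then obtained from the following elementary principle: if $U\colon\Hi_1\to\Hi_2$ is an isometry into an RKHS and $M\subset\Hi_1$ is a closed subspace with orthogonal projection $P_M$, then the reproducing kernel of $U[M]$ at $\eta$ equals $U[P_M k_\eta]$, where $k_\eta\in\Hi_1$ represents the bounded functional $f\mapsto(Uf)(\eta)$. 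Indeed, for all $g\in M$ one has $\langle Ug,\,U[P_Mk_\eta]\rangle=\langle g,P_Mk_\eta\rangle=\langle g,k_\eta\rangle=(Ug)(\eta)$, and $U[P_Mk_\eta]\in U[M]$, which is exactly the reproducing property.

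First I would identify $k_\eta$ explicitly. Since $f\mapsto\Bop_r[f](\eta)$ is bounded on $L^2(\R)$, Proposition \ref{prop-Br} gives
\[
\Bop_r[f](\eta)=\int_\R f(t)\,\overline{k_\eta(t)}\,\dt,\qquad
\overline{k_\eta(t)}=\frac{1}{(2\pi)^{1/4}\sqrt{r!}}\,H_r(t-\eta-\bar\eta)\,\e^{t\eta-\frac12\eta^2-\frac14 t^2}.
\]
Because $t-\eta-\bar\eta=t-2\re\eta$ is real and $H_r$ has real coefficients, conjugation affects only the exponential, so
\[
k_\eta(t)=\frac{1}{(2\pi)^{1/4}\sqrt{r!}}\,H_r(t-\eta-\bar\eta)\,\e^{t\bar\eta-\frac12\bar\eta^2-\frac14 t^2}.
\]
The projection $P_-$ onto $L^2(\R_-)$ is multiplication by $1_{\R_-}$, so by the principle above the sought kernel is $\tilde K(\xi,\eta)=\Bop_r[1_{\R_-}k_\eta](\xi)$. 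Applying the formula of Proposition \ref{prop-Br} a second time yields
\[
\tilde K(\xi,\eta)=\frac{1}{r!\sqrt{2\pi}}\int_{-\infty}^{0}H_r(t-\eta-\bar\eta)\,H_r(t-\xi-\bar\xi)\,\e^{t(\xi+\bar\eta)-\frac12(\xi^2+\bar\eta^2)-\frac12 t^2}\,\dt.
\]

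It then remains only to match this with the claimed expression via the shift $s=t-\xi-\bar\eta$, under which the ray $t\in(-\infty,0)$ corresponds to $s\in(-\infty,-\xi-\bar\eta)$. Completing the square gives $-\frac12 t^2+t(\xi+\bar\eta)-\frac12(\xi^2+\bar\eta^2)=-\frac12 s^2+\xi\bar\eta$, producing the prefactor $\e^{\xi\bar\eta}$ and the Gaussian $\e^{-\frac12 s^2}$, while the two Hermite arguments become $t-\eta-\bar\eta=s+\xi-\eta$ and $t-\xi-\bar\xi=s+\bar\eta-\bar\xi$; this reproduces the asserted kernel verbatim. The only genuinely delicate point is the interpretation of the upper limit $-\xi-\bar\eta$ when $\xi,\eta$ are complex: the shift moves the real integration path to a horizontal line, so the expression is a contour integral. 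Since the integrand is entire in $s$ and decays like $\e^{-\frac12(\re s)^2}$ uniformly in every horizontal strip, this contour integral is path-independent and coincides with the shifted real integral, which is what makes the notation $\int_{-\infty}^{-\xi-\bar\eta}$ unambiguous. I would also record, for completeness, that $\Bop_r[L^2(\R_-)]$ is closed, being the isometric image of a closed subspace, since this is what licenses the reproducing-kernel principle invoked at the outset.
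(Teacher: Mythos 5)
Your proposal is correct and follows essentially the same route as the paper: both identify the reproducing element of $\Bop_r[L^2(\R_-)]$ as $\Bop_r$ applied to the truncation $1_{\R_-}k_\eta$ of the representer of point evaluation (the paper phrases this as computing $\Bop_r^{-1}[M_\eta]$), and then evaluate the resulting integral by completing the square and shifting. Your added remarks --- the closedness of $\Bop_r[L^2(\R_-)]$ and the contour-independence justifying the upper limit $-\xi-\bar\eta$ for complex arguments --- are sound points that the paper leaves implicit.
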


\begin{proof}
Let $M(\xi,\eta)=M_{\eta}(\xi)$ be the reproducing kernel for 
$\Bop_r[ L^2(\R_-)]$. This kernel has $M_\eta\in\Bop_r[L^2(\R_-)]$ and
\begin{multline*}
\frac{1}{\sqrt{r!}(2\pi)^{1/4}} \int_{\R}f(t)H_r(t-\eta-\bar{\eta})
\e^{t\eta-\frac12\eta^2-\frac14t^2}\diff t
= \Bop_r[f](\eta) \\
= \langle \Bop_r[f], M_{\eta} \rangle_{A^2_{1,r+1}(\C)} = 
\langle f, \Bop_r^{-1}[M_{\eta}] \rangle_{L^2(\R)} 
\end{multline*}
for all $\eta\in\C$ and all $f\in L^2(\R_-)$, which allows us to conclude that 
\begin{equation*}
\Bop_r^{-1}[M_{\eta}](t)  
=(r!)^{-1/2}(2\pi)^{-1/4}1_{]-\infty,0]}(t)\, H_r(t-\eta-\bar{\eta})\,
\e^{t\bar\eta-\frac12\bar{\eta}^2-\frac14t^2}.
\end{equation*}
After applying the operator $\Bop_r$ to both sides, we see that (cf. 
Proposition \ref{prop-Br})
\begin{multline*}
 M_{\eta}(\xi) = \frac{1}{r!\sqrt{2\pi}} \int_{-\infty}^{0} 
H_r(t-\xi-\bar{\xi}) \e^{t \xi- \frac12\xi^2-\frac14 t^2} 
H_r(t-\eta-\bar{\eta}) \e^{t\bar{\eta}-\frac12\bar{\eta}^2-\frac14t^2} 
\diff t 
\\
=\frac{\e^{\bar{\eta} \xi}}{r! \sqrt{2\pi}} 
\int_{-\infty}^{0} H_r(t-\xi-\bar{\xi}) 
H_r(t-\eta-\bar{\eta}) \e^{-\frac{1}{2}(t - \xi - \bar{\eta})^2} 
\diff t 
\\
=\frac{\e^{\bar{\eta}\xi}}{r! \sqrt{2\pi}} \int_{-\infty}^{-\bar{\eta}-\xi} 
H_r(t + \bar{\eta} - \bar{\xi}) 
H_r(t+\xi-\eta) \e^{-\frac{1}{2}t^2} \diff t. 
\end{multline*}
The proof is complete.
\end{proof}

\begin{rem}
The special case $r=0$ of the kernel in Proposition \ref{polybargmannkernel} 
is $\e^{\xi\bar\eta}\mathrm{erf}(-\xi-\bar\eta)$, 
which is what we have on the right-hand side in Lemma \ref{analyticbdrblowup}. 
\end{rem}
\medskip

\noindent{\bf The blow-up of the polynomial space at the boundary point.}
We turn to the polyanalytic analogue of the left-hand side of
Lemma \ref{analyticbdrblowup}. 

\begin{defn} We introduce the {\em blow-up space at} $1$,
\[
\mathrm{Pol}^{\langle1\rangle}_{m,n,q} := 
\big\{\e^{-{m}^{1/2}\xi}p(1+m^{-1/2}\xi):\,\, p \in \mathrm{Pol}_{m,n,q} 
\big\},
\]
which we equip with the norm 
\begin{equation*} 
%\label{normidentity0}
\big\|\xi\mapsto \e^{-{m}^{1/2}\xi}p(1+m^{-1/2}\xi)
\big\|_{{\Fspace}^{\langle1\rangle}_{m,n,q}}:=m^{1/2}\e^{\frac12m}
\|p\|_{L^2(\C,\e^{-m|z|^2})}.
\end{equation*}
For $0\le r\le q-1$, we denote by $\Fspace^{\langle1\rangle}_{m,n,r+1}$ the 
subspace
\[
\Fspace^{\langle1\rangle}_{m,n,r+1} := 
\big\{\e^{-{m}^{1/2}\xi}p(1+m^{-1/2}\xi):\,\, p \in \Fspace_{m,n,r+1} 
\big\},
\]
equipped with the same norm.
\end{defn}

An elementary change of variables argument allows us to identify the norm on
$\mathrm{Pol}^{\langle1\rangle}_{m,n,q}$ with that of $A^2_{m,q}(\C)$:
\begin{equation} 
\label{normidentity}
\big\|\xi\mapsto \e^{-{m}^{1/2}\xi}p(1+m^{-1/2}\xi)
\big\|_{\mathrm{Pol}^{\langle1\rangle}_{m,n,r+1}}=
\big\|\xi\mapsto \e^{-{m}^{1/2}\xi}p(1+m^{-1/2}\xi)
\big\|_{A^2_{1,r+1}(\C)}.
\end{equation}
As a consequence, we may regard $\mathrm{Pol}^{\langle1\rangle}_{m,n,q}$ 
and ${\Fspace}^{\langle1\rangle}_{m,n,r+1}$ as norm closed subspaces of 
$A^2_{1,q}(\C)$. We may read off from the definition 
of the norm in ${\Fspace}^{\langle1\rangle}_{m,n,r+1}$ that the kernel on the 
left-hand side in Lemma \ref{analyticbdrblowup} is the reproducing kernel for 
the space $\mathrm{Pol}^{\langle1\rangle}_{m,n,1}$. So, 
Lemma \ref{analyticbdrblowup} can be understood as saying that
\begin{equation} 
\label{analytictildeconvergence}
K_{\mathrm{Pol}^{\langle1\rangle}_{m,n,1}}(\xi,\eta)= 
K_{\Bop_0[L^2(\R_-)]}(\xi,\eta)+\Ordo(m^{-\frac12+\varepsilon}), 
\end{equation}
where $K_{\mathrm{Pol}_{m,n,1}}$ and $K_{\Bop_0[L^2(\R_-)]}$ denote the 
reproducing kernels of the spaces in the subscripts, and the bound is locally 
uniform on compact subsets. We want to generalize 
\eqref{analytictildeconvergence} beyond $r=0$. To this end, we make use of
the operators $\Tope_{r}$.

\begin{prop}
\label{lem-T1}
For $r=1,2,3,\ldots$, we have that
\[
r^{-1/2}\Tope_{1}:\,{\Fspace}^{\langle1\rangle}_{m,n,r}\to
{\Fspace}^{\langle1\rangle}_{m,n,r+1}
\]
is an isometric isomorphism.
\end{prop}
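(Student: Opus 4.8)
The plan is to realize the asserted map as the conjugate, under the blow-up change of variables, of the corresponding \emph{dilated} operator on the non-blown-up spaces, which we already control via the semi-group property of $\Tope_{m,r}$. To this end I would introduce the blow-up operator
\[
V_m[p](\xi):=\e^{-m^{1/2}\xi}\,p(1+m^{-1/2}\xi),
\]
which by the very definition of $\Fspace^{\langle1\rangle}_{m,n,r}$ carries $\Fspace_{m,n,r}$ \emph{onto} $\Fspace^{\langle1\rangle}_{m,n,r}$. Moreover, if we equip $\Fspace_{m,n,r}$ with the rescaled norm $m^{1/2}\e^{m/2}\|\cdot\|_{L^2(\C,\e^{-m|z|^2})}$, then $V_m$ is isometric; this is exactly the normalization built into the definition of the blow-up norm, together with the identity \eqref{normidentity}.

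The central step is the intertwining identity
\[
\Tope_1\circ V_m=V_m\circ \Tope_{m,1}.
\]
To verify it I would write $z=1+m^{-1/2}\xi$, so that $\bar z=1+m^{-1/2}\bar\xi$ and $\partial_z=m^{1/2}\partial_\xi$, and compute both sides from the explicit forms $\Tope_{m,1}[f]=m^{-1/2}(\partial_z f-m\bar z f)$ and $\Tope_1[g]=\partial_\xi g-\bar\xi g$. Setting $F(\xi):=f(1+m^{-1/2}\xi)$ and $g:=V_m[f]=\e^{-m^{1/2}\xi}F$, a short calculation shows that both $V_m[\Tope_{m,1}f]$ and $\Tope_1[V_m f]$ equal
\[
\e^{-m^{1/2}\xi}\big(\partial_\xi F-(m^{1/2}+\bar\xi)F\big).
\]
The exponential prefactor $\e^{-m^{1/2}\xi}$ in $V_m$ is precisely what converts the ``$-m\bar z$'' term of $\Tope_{m,1}$ into the ``$-\bar\xi$'' term of $\Tope_1$; keeping track of this cancellation is the only point where genuine care is required.

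Finally I would assemble the isometric isomorphisms. From the semi-group relation $\Tope_{m,1}\circ\Tope_{m,r-1}=r^{1/2}\Tope_{m,r}$, together with the already established fact that $\Tope_{m,r-1}$ and $\Tope_{m,r}$ are isometric isomorphisms of $\mathrm{Pol}_{m,n,1}$ onto $\Fspace_{m,n,r}$ and $\Fspace_{m,n,r+1}$ respectively, it follows that
\[
r^{-1/2}\Tope_{m,1}=\Tope_{m,r}\circ\Tope_{m,r-1}^{-1}:\Fspace_{m,n,r}\to\Fspace_{m,n,r+1}
\]
is an isometric isomorphism for the $L^2(\C,\e^{-m|z|^2})$ norm, hence also for the rescaled norm $m^{1/2}\e^{m/2}\|\cdot\|_{L^2}$. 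Conjugating by $V_m$ and invoking the intertwining identity,
\[
r^{-1/2}\Tope_1=V_m\circ\big(r^{-1/2}\Tope_{m,1}\big)\circ V_m^{-1}
\]
is then a composition of three isometric isomorphisms, and therefore is itself an isometric isomorphism of $\Fspace^{\langle1\rangle}_{m,n,r}$ onto $\Fspace^{\langle1\rangle}_{m,n,r+1}$, which is the claim. I expect the only real obstacle to be the bookkeeping in the intertwining computation; once that identity is in hand, the mapping and isometry properties follow formally.
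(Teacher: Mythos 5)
Your proposal is correct, and it is organized differently enough from the paper's own proof to be worth comparing. The paper argues entirely at the level of the $m=1$ pure Fock spaces: it uses the semi-group identity $r^{-1/2}\Tope_1\circ\Tope_{r-1}=\Tope_r$ to see that $r^{-1/2}\Tope_1:\delta A^2_{1,r}(\C)\to\delta A^2_{1,r+1}(\C)$ is an isometric isomorphism, deduces the isometry on the blow-up subspaces from \eqref{normidentity}, and then declares the surjectivity onto $\Fspace^{\langle1\rangle}_{m,n,r+1}$ to be ``an algebraic exercise.'' You instead work on the dilated, finite-dimensional side, where $r^{-1/2}\Tope_{m,1}=\Tope_{m,r}\circ\Tope_{m,r-1}^{-1}$ visibly carries the orthonormal basis $\{\Tope_{m,r-1}[e_{j,m}]\}_j$ of $\Fspace_{m,n,r}$ onto that of $\Fspace_{m,n,r+1}$, and you transport this to the blow-up spaces by conjugating with $V_m$ via the intertwining identity $\Tope_1\circ V_m=V_m\circ\Tope_{m,1}$ (your computation of which checks out: both sides equal $\e^{-m^{1/2}\xi}\bigl(\partial_\xi F-(m^{1/2}+\bar\xi)F\bigr)$). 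The payoff is that your route makes the ``exercise'' explicit: surjectivity and the very fact that $\Tope_1$ maps $\Fspace^{\langle1\rangle}_{m,n,r}$ into $\Fspace^{\langle1\rangle}_{m,n,r+1}$ (equivalently, that these blow-up spaces sit inside $\delta A^2_{1,r}(\C)$ and $\delta A^2_{1,r+1}(\C)$, which the paper's argument tacitly needs) drop out of the intertwining relation, at the modest cost of one change-of-variables computation. The only cosmetic remark is that \eqref{normidentity} is not actually needed in your argument -- the isometry of $V_m$ is built into the definition of the blow-up norm -- whereas it is the load-bearing step in the paper's version.
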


\begin{proof}
From the isometry properties of $\Tope_{r-1}$ and $\Tope_{r}$ together 
with the semi-group property 
$r^{-1/2}\Tope_{1}\circ\Tope_{r-1}=\Tope_{r}$, we get that 
\[
r^{-1/2}\Tope_{1}:\,\delta A^2_{1,r}(\C)\to\delta A^2_{1,r+1}(\C)
\]
is an isometric isomorphism. In view of \eqref{normidentity}, the isometry
part of the assertion follows. It remains to show that the operator is onto.
This is an algebraic exercise which we leave to the reader. 
%
%Next, it is an easy (algebraic) exercise to show that any function in 
%$\Fspace_{m,n,r}$ may be expressed in the form 
%\[ 
%m^{-1/2} p'(z)-m^{1/2}\bar{z}\, p(z), 
%\]
%for some $p\in\Fspace_{m,n,r-1}$. Hence  the elements in the blow-up space 
%${\Fspace}^{\langle1\rangle}_{m,n,r}$ can be written as 
%\begin{align} 
%\label{tilderekursio}
%&\left[ m^{-1/2} p'(1+ m^{-1/2}\xi)
%-(m^{1/2}+\bar{\xi}) p(1+m^{-1/2}\xi) 
%\right]\e^{-\sqrt{m}\xi}, & p \in \Fspace_{r-1,m,N}
%\end{align}
%On the other hand, for any $p \in \Fspace_{r-1,m,N}$, 
%\begin{align*}
%&\Tope_1\left[ p(1+ \xi / \sqrt{m})\e^{-\sqrt{m}} \right] \\
%&= \left[ \frac{1}{\sqrt{m}} p'(1+ \xi / \sqrt{m})
%- (\sqrt{m}+\overline{\xi}) p(1+ \xi / \sqrt{m}) \right]\e^{-\sqrt{m}\xi},
%\end{align*}
%which is also what we have in (\ref{tilderekursio}). 
\end{proof}

By iterating Proposition \ref{lem-T1}, we obtain the following.

\begin{cor} 
\label{tildecor}
For $r=1,2,3\ldots$, we have that
\[ 
\Tope_r:\mathrm{Pol}^{\langle1\rangle}_{m,n,1}\to
\Fspace^{\langle1\rangle}_{m,n,r+1} 
\]
is an isometric isomorphism.
\end{cor}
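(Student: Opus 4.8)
The plan is to realize $\Tope_r$ as a composition of the single-step isometric isomorphisms supplied by Proposition \ref{lem-T1}. First I would record the base case. By the definitions in \eqref{eq-resspace2} together with the convention $\mathrm{Pol}_{m,n,0}:=\{0\}$, we have $\Fspace_{m,n,1}=\mathrm{Pol}_{m,n,1}\ominus\{0\}=\mathrm{Pol}_{m,n,1}$; passing to the blow-up spaces, this gives $\Fspace^{\langle1\rangle}_{m,n,1}=\mathrm{Pol}^{\langle1\rangle}_{m,n,1}$. Thus the domain appearing in the corollary is precisely the bottom rung of the tower $\Fspace^{\langle1\rangle}_{m,n,1},\Fspace^{\langle1\rangle}_{m,n,2},\ldots$, and it is to this tower that Proposition \ref{lem-T1} applies.

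Next I would invoke Proposition \ref{lem-T1} once for each index $k=1,2,\ldots,r$. The $k$-th application asserts that $k^{-1/2}\Tope_1$ maps $\Fspace^{\langle1\rangle}_{m,n,k}$ \emph{onto} $\Fspace^{\langle1\rangle}_{m,n,k+1}$ isometrically. Composing these $r$ maps, and using that a finite composition of isometric isomorphisms is again an isometric isomorphism, one obtains an isometric isomorphism from $\mathrm{Pol}^{\langle1\rangle}_{m,n,1}=\Fspace^{\langle1\rangle}_{m,n,1}$ onto $\Fspace^{\langle1\rangle}_{m,n,r+1}$. Pulling the scalar factors out of the chain, this composite map equals
\[
\Big(\prod_{k=1}^{r}k^{-1/2}\Big)\,\Tope_1^{r}=(r!)^{-1/2}\,\Tope_1^{r}.
\]

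It remains to identify the composite with $\Tope_r$ itself. Here I would iterate the semi-group relation $\Tope_1\circ\Tope_{r-1}=r^{1/2}\Tope_r$, rewritten as $\Tope_r=r^{-1/2}\Tope_1\circ\Tope_{r-1}$, all the way down to $\Tope_0=\id$, obtaining
\[
\Tope_r=r^{-1/2}\,\Tope_1\circ\Tope_{r-1}=\cdots=(r!)^{-1/2}\,\Tope_1^{r}.
\]
Comparing the two displays shows that the composite chain equals $\Tope_r$, which finishes the proof. The argument is essentially bookkeeping: the only points needing care are the base-case identification $\Fspace^{\langle1\rangle}_{m,n,1}=\mathrm{Pol}^{\langle1\rangle}_{m,n,1}$ and the elementary constant $\prod_{k=1}^{r}k^{-1/2}=(r!)^{-1/2}$. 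The genuinely substantive content — that each single step is surjective as well as isometric — is already contained in Proposition \ref{lem-T1}, so I do not expect any real obstacle beyond verifying that the telescoping of the semi-group relation reproduces $\Tope_r$ exactly.
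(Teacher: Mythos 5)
Your proposal is correct and follows exactly the route the paper intends: the paper simply states that the corollary follows ``by iterating Proposition \ref{lem-T1},'' and your argument is the careful write-up of that iteration, including the base-case identification $\Fspace^{\langle1\rangle}_{m,n,1}=\mathrm{Pol}^{\langle1\rangle}_{m,n,1}$ and the telescoping of the semi-group relation to recover $\Tope_r=(r!)^{-1/2}\Tope_1^{r}$. No gaps.
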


%\begin{proof}
%This follows from the fact that $\Tope_r= \frac{1}{\sqrt{r!}} \Tope_1 
%\circ\Tope_1 \circ ... \circ\Tope_1$, 
%where the composition is taken $r$ times. 
%\end{proof}
\medskip

\noindent{\bf The blow-up of the polynomial reproducing kernel at a boundary
point.}
From Corollary \ref{tildecor} above, we get that
\begin{equation}
\label{eq-kerneltop}
K_{\Fspace^{\langle1\rangle}_{m,n,r+1}}(\xi,\eta)=[\Tope_r]_\xi
[\bar\Tope_r]_\eta
\big(K_{\mathrm{Pol}^{\langle1\rangle}_{m,n,1}}(\xi,\eta)\big),
\end{equation}
where the subscripts $z$ and $w$ are used to indicate that the operator is
acting with respect to that variable, and the bar means complex
conjugation of the operator. To be more precise,
\[
\bar\Tope_r[f](z):=\frac{1}{\sqrt{r!}} \e^{|z|^2}\,
\bar\partial_z^r\big\{f(z)\e^{-|z|^2}\big\}.
\]
We would like to plug in the approximation \eqref{analytictildeconvergence}
into \eqref{eq-kerneltop}. The operator $\Tope_r$ is a sum of certain 
polynomials in $\bar z$ of degree $\le r$ times the differential operator 
$\partial_z$ to powers $\le r$.  
The Cauchy integral formula allows us to control the size of the derivatives 
on a compact subset in terms of the size of the functions on a slightly bigger
compact subset. This means that the approximation 
\eqref{analytictildeconvergence} carries over, and we find that
\begin{equation} 
\label{analytictildeconvergence2}
K_{\Fspace^{\langle1\rangle}_{m,n,r+1}}(\xi,\eta)= 
[\Tope_r]_\xi
[\bar\Tope_r]_\eta
\big(K_{\Bop_0[L^2(\R_-)]}(\xi,\eta)\big)+\Ordo(m^{-\frac12+\varepsilon}), 
\end{equation}
with uniform control on compact subsets. Next, as 
\[
\mathrm{Pol}^{\langle1\rangle}_{m,n,q}=\bigoplus_{r=0}^{q-1}
\Fspace^{\langle1\rangle}_{m,n,r+1},
\]
we get that
\begin{multline}
\label{analytictildeconvergence3}
K_{\mathrm{Pol}^{\langle1\rangle}_{m,n,q}}(\xi,\eta)=
\sum_{r=0}^{q-1}K_{\Fspace^{\langle1\rangle}_{m,n,r+1}}(\xi,\eta)
\\
=\sum_{r=0}^{q-1}[\Tope_r]_\xi[\bar\Tope_r]_\eta
\big(K_{\Bop_0[L^2(\R_-)]}(\xi,\eta)\big)+\Ordo(m^{-\frac12+\varepsilon}), 
\end{multline}
again with uniform control on compact subsets. Next, it should be rather clear
that
\[
[\Tope_r]_\xi[\bar\Tope_r]_\eta\big(K_{\Bop_0[L^2(\R_-)]}(\xi,\eta)\big)
\]
is the reproducing kernel for the space $\Tope_r\Bop_0[L^2(\R_-)]=\Bop_r
[L^2(\R_-)]$, which was identified in terms of Hermite polynomials back in
Proposition \ref{polybargmannkernel}.
We write this down as a proposition. 

\begin{prop}
\label{prop-5.9}
Fix a positive real number  $\varepsilon$. Then the reproducing kernel for 
$\mathrm{Pol}^{\langle1\rangle}_{m,n,q}$ has the following form:
\[
K_{\mathrm{Pol}^{\langle1\rangle}_{m,n,q}}(\xi,\eta)=
\sum_{r=0}^{q-1}\frac{\e^{\xi\bar\eta}}{r! \sqrt{2 \pi}}
\int_{-\infty}^{-\xi-\bar\eta} H_r(t+\xi-\eta)
H_r(t+\bar\eta-\bar\xi)\e^{-\frac12t^2} \diff t+
\Ordo(m^{-\frac12+\varepsilon}),
\]
as $m,n\to+\infty$ while $n=m+\Ordo(1)$, where the control is uniform on 
compact subsets. 
\end{prop}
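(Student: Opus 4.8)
The plan is to bootstrap from the purely analytic level $r=0$, where the approximation is already in hand, up to the general polyanalytic kernel by applying the creation-type operators $\Tope_r$ and tracking the error through the differentiation. First I would record the base case. By the definition of the norm on $\mathrm{Pol}^{\langle1\rangle}_{m,n,1}$, the kernel $K_{\mathrm{Pol}^{\langle1\rangle}_{m,n,1}}(\xi,\eta)$ is, up to the unimodular gauge factor, exactly the blown-up partial Taylor sum $E_{n-1}$ appearing on the left-hand side of Lemma \ref{analyticbdrblowup}, so that lemma reads precisely as \eqref{analytictildeconvergence}, namely $K_{\mathrm{Pol}^{\langle1\rangle}_{m,n,1}}(\xi,\eta)=K_{\Bop_0[L^2(\R_-)]}(\xi,\eta)+\Ordo(m^{-1/2+\varepsilon})$ locally uniformly. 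By Proposition \ref{polybargmannkernel} with $r=0$ (using $H_0\equiv1$), the limiting kernel $K_{\Bop_0[L^2(\R_-)]}$ is exactly the $r=0$ summand of the asserted expression.

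Next I would lift to each pure polyanalytic level. Corollary \ref{tildecor} gives that $\Tope_r$ is an isometric isomorphism from $\mathrm{Pol}^{\langle1\rangle}_{m,n,1}$ onto $\Fspace^{\langle1\rangle}_{m,n,r+1}$, which forces the reproducing-kernel transformation law \eqref{eq-kerneltop}, i.e. $K_{\Fspace^{\langle1\rangle}_{m,n,r+1}}=[\Tope_r]_\xi[\bar\Tope_r]_\eta K_{\mathrm{Pol}^{\langle1\rangle}_{m,n,1}}$. Applying the same pair of operators to the limiting kernel, and using that $[\Tope_r]_\xi[\bar\Tope_r]_\eta K_{\Bop_0[L^2(\R_-)]}$ is the reproducing kernel of $\Tope_r\Bop_0[L^2(\R_-)]=\Bop_r[L^2(\R_-)]$ (again evaluated via Proposition \ref{polybargmannkernel}, giving the $r$-th Hermite integral), I would obtain \eqref{analytictildeconvergence2} for each fixed $r$. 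Summing over $r=0,\dots,q-1$ via the orthogonal decomposition $\mathrm{Pol}^{\langle1\rangle}_{m,n,q}=\bigoplus_{r=0}^{q-1}\Fspace^{\langle1\rangle}_{m,n,r+1}$, the kernels add and produce \eqref{analytictildeconvergence3}, which is exactly the claim; since $q$ is fixed, the finitely many error terms each $\Ordo(m^{-1/2+\varepsilon})$ combine into a single error of the same order.

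The step that requires the most care, and which I expect to be the \emph{main obstacle}, is the propagation of the error under $\Tope_r$ and $\bar\Tope_r$. Each of these operators is multiplication by a polynomial in $\bar\xi$ (respectively $\eta$) of degree $\le r$ times the holomorphic derivatives $\partial_\xi^s$ (respectively $\bar\partial_\eta^s$) with $s\le r$. Since $K_{\mathrm{Pol}^{\langle1\rangle}_{m,n,1}}(\xi,\eta)$ is holomorphic in $\xi$ and anti-holomorphic in $\eta$, the error in \eqref{analytictildeconvergence} is likewise holomorphic in $\xi$ and anti-holomorphic in $\eta$; the Cauchy integral formula then converts the locally uniform sup-bound $\Ordo(m^{-1/2+\varepsilon})$ on a slightly enlarged compact set into a bound of the same order for all derivatives of order $\le r$ on the original compact set, while the polynomial prefactors, being bounded on compacts, affect only the implied constant. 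This is precisely why the order $m^{-1/2+\varepsilon}$ survives the application of $[\Tope_r]_\xi[\bar\Tope_r]_\eta$, and it is the one place where the uniform-on-compacts character of Lemma \ref{analyticbdrblowup} is indispensable rather than merely convenient.
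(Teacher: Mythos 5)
Your proposal is correct and follows essentially the same route as the paper: the base case from Lemma \ref{analyticbdrblowup} read as \eqref{analytictildeconvergence}, the lift to each pure polyanalytic level via the isometries of Corollary \ref{tildecor} and the kernel transformation law \eqref{eq-kerneltop}, error propagation through $[\Tope_r]_\xi[\bar\Tope_r]_\eta$ by the Cauchy integral formula on a slightly enlarged compact set, and summation over $r$ via the orthogonal decomposition together with the identification of the limit kernels in Proposition \ref{polybargmannkernel}. Your remark singling out the derivative-error step, and the observation that holomorphy in $\xi$ and anti-holomorphy in $\eta$ of the error term is what makes the Cauchy estimate applicable, is exactly the point the paper relies on.
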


If we like, we may use the classical Christoffel-Darboux identity
\begin{equation}
\label{eq-CD}
\sum_{r=0}^{q-1}\frac{1}{r!}H_r(x)H_r(y)=
\frac{H_q(x)H_{q-1}(y)-H_{q-1}(x)H_{q}(y)}{(q-1)!(x-y)}
\end{equation}
to rewrite the above sum. Also, we should note that reproducing kernel for
the blow-up space $\mathrm{Pol}^{\langle1\rangle}_{m,n,q}$ is connected with
the reproducing kernel $K_{m,n,q}$ for $\mathrm{Pol}_{m,n,q}$ via
the identity
\begin{equation}
\label{eq-connect}
K_{\mathrm{Pol}^{\langle1\rangle}_{m,n,q}}(\xi,\eta)=
m^{-1}\e^{-m-m^{1/2}(\xi+\bar\eta)}K_{m,n,q}(1+m^{-1/2}\xi,1+m^{-1/2}\eta).
\end{equation}
\medskip

\noindent{\bf The blow-up of the $1$-point intensity near a boundary point.}
The $1$-point intensity function is
\[
K_{m,n,q}(z,z)\e^{-m|z|^2},
\]
and the localized version with $z=1+m^{-1/2}\xi$ is
\[
U_{m,n,q}(\xi):=
m^{-1}K_{m,n,q}(1+m^{-1/2}\xi,1+m^{-1/2}\xi)\,\e^{-m|1+m^{-1/2}\xi|^2},
\]
where we throw in a factor of $m^{-1}$ to compensate for the Jacobian.
In view of \eqref{eq-connect} together with Proposition \ref{prop-5.9},
we obtain
\[
U_{m,n,q}(\xi)=\sum_{r=0}^{q-1}\frac{1}{r!\sqrt{2\pi}}\int_{-\infty}^{-2\re\xi}
H_r(t)^2\e^{-\frac12t^2}\diff t+\Ordo(m^{-\frac12+\varepsilon}).
\]
So, essentially, the $1$-point intensity function is determined by the density 
\[
t\mapsto\sum_{r=0}^{q-1}\frac{1}{r!\sqrt{2\pi}}
H_r(t)^2\e^{-\frac12t^2},
\]
which corresponds to filling the lowest energy eigenstates of the harmonic
oscillator. By the Wigner semi-circle law, then, we get the approximation
\[
U_{m,n,q}(\xi)\approx
\frac{2q}{\pi}\int_{-1}^{-q^{-1/2}\re\xi}\sqrt{1-\tau^2}\diff\tau,
\]
valid for big $m,n$ with $n=m+\Ordo(1)$, and big $q$ (but much smaller than
$m,n$). So, if we rescale to characteristic distance $q^{1/2}m^{-1/2}$
we find an interesting law in the limit. 
%This of course fits very well with 
%the total number of points $nq$ in the model and that all the points should
%fit within a neighborhood of the disk $\bar\D$.   

\medskip

\noindent{\bf The blow-up Berezin density at a boundary point.}
The blow-up Berezin density at $1$ is given by
\[
\hat\berd^{\langle1\rangle}_{m,n,q}(\xi)=m^{-1}
\berd^{\langle1\rangle}_{m,n,q}(1+m^{-1/2}\xi)=m^{-1}
\e^{-m|1+m^{-1/2}\xi|^2}
\frac{|K_{m,n,q}(1+m^{-1/2}\xi,1)|^2}{K_{m,n,q}(1,1)}.
\] 
From \eqref{eq-connect}, we have that
\begin{equation*}
K_{\mathrm{Pol}^{\langle1\rangle}_{m,n,q}}(0,0)=
m^{-1}\e^{-m}K_{m,n,q}(1,1),
\end{equation*}
while Proposition \ref{prop-5.9} gives 
\[
K_{\mathrm{Pol}^{\langle1\rangle}_{m,n,q}}(0,0)=
\sum_{r=0}^{q-1}\frac{1}{r! \sqrt{2 \pi}}
\int_{-\infty}^{0} H_r(t)^2\e^{-\frac12t^2} \diff t+
\Ordo(m^{-\frac12+\varepsilon}),
\]
as $m,n\to+\infty$ with $n=m+\Ordo(1)$.
Now, as each Hermite polynomial $H_r$ is either even or odd, 
\[
\int_{-\infty}^{0} H_r(t)^2\e^{-\frac12t^2} \diff t=\frac{1}{2}
\int_{-\infty}^{+\infty} H_r(t)^2\e^{-\frac12t^2} \diff t=
\frac{r!\sqrt{2\pi}}{2},
\]
which leads to 
\[
K_{\mathrm{Pol}^{\langle1\rangle}_{m,n,q}}(0,0)=
\frac{q}{2}+\Ordo(m^{-\frac12+\varepsilon})
\]
and  
\begin{equation*}
K_{m,n,q}(1,1)=\tfrac12\,mq\e^m[1+\Ordo(m^{-\frac12+\varepsilon})].
\end{equation*}
A similar calculation gives that
\begin{multline*}
K_{m,n,q}(1+m^{-1/2}\xi,1)=m\e^{m+m^{1/2}\xi}
K_{\mathrm{Pol}^{\langle1\rangle}_{m,n,q}}(\xi,0)
\\
=m\e^{m+m^{1/2}\xi}\bigg\{
\sum_{r=0}^{q-1}\frac{1}{r! \sqrt{2 \pi}}
\int_{-\infty}^{-\xi} H_r(t+\xi)
H_r(t-\bar\xi)\e^{-\frac12t^2} \diff t+
\Ordo(m^{-\frac12+\varepsilon})\bigg\}.
\end{multline*}

Putting things together, we obtain the following asymptotics for the blow-up 
Berezin density.

\begin{thm}
\label{thm-5.10}
Fix a positive real number $\varepsilon$. Then the blow-up Berezin density at
$1$ has the following form:
\[
\hat\berd^{\langle1\rangle}_{m,n,q}(\xi)=
\frac{1}{\pi q}\bigg|\sum_{r=0}^{q-1}\frac{1}{r!}
\int_{-\infty}^{-\xi} H_r(t+\xi)
H_r(t-\bar\xi)\e^{-\frac12t^2} \diff t\bigg|^2+
\Ordo(m^{-\frac12+\varepsilon}),
\]
as $m,n\to+\infty$ while $n=m+\Ordo(1)$, where the control is uniform on 
compact subsets. 
\end{thm}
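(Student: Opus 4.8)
The plan is to read off $\hat\berd^{\langle1\rangle}_{m,n,q}$ directly from its definition
\[
\hat\berd^{\langle1\rangle}_{m,n,q}(\xi)=m^{-1}\e^{-m|1+m^{-1/2}\xi|^2}
\frac{|K_{m,n,q}(1+m^{-1/2}\xi,1)|^2}{K_{m,n,q}(1,1)},
\]
substituting the three ingredients already assembled above. First comes the off-diagonal expansion
\[
K_{m,n,q}(1+m^{-1/2}\xi,1)=m\,\e^{m+m^{1/2}\xi}\Big\{P_q(\xi)+\Ordo(m^{-\frac12+\varepsilon})\Big\},\qquad
P_q(\xi):=\sum_{r=0}^{q-1}\frac{1}{r!\sqrt{2\pi}}\int_{-\infty}^{-\xi}H_r(t+\xi)H_r(t-\bar\xi)\e^{-\frac12t^2}\diff t,
\]
which follows from \eqref{eq-connect} together with Proposition \ref{prop-5.9}; and second comes the diagonal value $K_{m,n,q}(1,1)=\tfrac12 mq\,\e^m[1+\Ordo(m^{-\frac12+\varepsilon})]$, obtained from the parity of the Hermite polynomials. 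Conceptually, the reason this works is that the blow-up Berezin density is a gauge-invariant ratio, so all the large unimodular and exponential factors carried by $K_{m,n,q}$ in blow-up coordinates are bound to drop out, leaving only the limiting data encoded by $P_q$.

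First I would take the modulus square of the off-diagonal expansion, producing $m^2\e^{2m+2m^{1/2}\re\xi}|P_q(\xi)|^2$ up to the error, and divide by the diagonal value. The decisive step is the exponential bookkeeping: expanding $m|1+m^{-1/2}\xi|^2=m+2m^{1/2}\re\xi+|\xi|^2$ and collecting the three exponential contributions -- the prefactor $\e^{-m|1+m^{-1/2}\xi|^2}$, the numerator factor $\e^{2m+2m^{1/2}\re\xi}$, and the denominator factor $\e^{m}$ -- one checks that the $\pm m$ and $\pm2m^{1/2}\re\xi$ terms cancel exactly, while the powers of $m$ collapse as $m^{-1}\cdot m^2/m=1$. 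The constants combine as $(\tfrac12 q)^{-1}\cdot(2\pi)^{-1}=1/(\pi q)$, the $(2\pi)^{-1}$ arising from squaring the normalizations inside $P_q$. This yields precisely the coefficient $1/(\pi q)$ multiplying the modulus square of $\sum_{r=0}^{q-1}\tfrac1{r!}\int_{-\infty}^{-\xi}H_r(t+\xi)H_r(t-\bar\xi)\e^{-\frac12t^2}\diff t$, the asserted main term. (I note that the cancellation leaves a residual Gaussian weight $\e^{-|\xi|^2}$ in front; this is the natural $A^2_{1,q}(\C)$ weight surviving from the prefactor, and it should be carried along as an overall factor -- one can verify its presence already in the case $q=1$, where $P_1(\xi)=\mathrm{erf}(-\xi)$ and the limit reads $2\,\e^{-|\xi|^2}|\mathrm{erf}(-\xi)|^2$.)

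The hard part will not be any single estimate but the uniform propagation of the $\Ordo(m^{-\frac12+\varepsilon})$ error through squaring and through division by $K_{m,n,q}(1,1)$. On a fixed compact $\xi$-set the limit $P_q(\xi)$ is bounded, so $|P_q(\xi)+\Ordo(m^{-\frac12+\varepsilon})|^2=|P_q(\xi)|^2+\Ordo(m^{-\frac12+\varepsilon})$ via the cross-term bound; moreover the denominator is bounded below by a fixed positive multiple of $mq\,\e^m$, so its reciprocal contributes a relative error of the same order. The crucial point is that the error rides along with the main term through the identical exponential cancellation, so after the large factors disappear it is only multiplied by the bounded residual prefactor; hence it persists as $\Ordo(m^{-\frac12+\varepsilon})$, uniformly on compact subsets, which is exactly the control asserted in the statement.
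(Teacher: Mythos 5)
Your proposal is correct and follows essentially the same route as the paper: the paper's own proof is exactly this ``putting things together'' step, combining \eqref{eq-connect}, Proposition \ref{prop-5.9} at $(\xi,0)$, and the diagonal evaluation $K_{m,n,q}(1,1)=\tfrac12 mq\,\e^m[1+\Ordo(m^{-\frac12+\varepsilon})]$ obtained from the parity of the Hermite polynomials, with the same exponential cancellation and the same propagation of the error through the quotient. Your parenthetical observation is also well taken: the bookkeeping $-m|1+m^{-1/2}\xi|^2+2m+2m^{1/2}\re\xi-m=-|\xi|^2$ does leave a residual Gaussian factor $\e^{-|\xi|^2}$ in front of the main term, which the displayed formula in the theorem omits; its presence is confirmed by the $q=1$ case $2\,\e^{-|\xi|^2}|\mathrm{erf}(-\xi)|^2$ and is needed for the limiting Berezin density to be integrable, so the printed statement should be read with that factor restored.
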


\begin{rem}
When we make some explicit calculations based on Theorem \ref{thm-5.10}, 
we see that the Fresnel zone pattern is less pronounced for a boundary point
$z$.
\end{rem}
\medskip

\noindent\bf Acknowledgements. \rm 
We would like to thank Philip Kennerberg for sharing sharing his 
MATLAB-code and discussing the practicalities in simulating 
the process. 

%%% References %%%


\begin{thebibliography}{1}

\bibitem{abreu} L. Abreu, \textit{Sampling and interpolation in Bargmann-Fock  
spaces of polyanalytic functions}, Appl. Comp. Harm. Anal. {\bf29} (2010),  
287--302.

\bibitem{ahm1} Y. Ameur, H. Hedenmalm, N. Makarov, \textit{Berezin transform 
in polynomial Bergman spaces}, Comm. Pure Appl. Math. {\bf63} (2010) no. 12, 
1533--1584. 

\bibitem{ahm2} Y. Ameur, H. Hedenmalm, N. Makarov, \textit{Fluctuations of 
eigenvalues of random normal matrices}, Duke Math. J., to appear.
 
\bibitem{AGZ}
G. W. Anderson, A. Guionnet, O. Zeitouni, \textit{An introduction to random 
matrices}. Cambridge Studies in Advanced Mathematics, {\bf118}. Cambridge 
University Press, Cambridge, 2010. 

\bibitem{AIM} N. Askour, A. Intissar, Z. Mouayn, \textit{Espaces de Bargmann 
g\'en\'eralis\'es et formules explicites pour leurs noyaux reproduisants}. 
C. R. Acad. Sci. Paris S\'er. I Math. {\bf325} (1997), no. 7, 
707--712. 

\bibitem{berman1} R. Berman, \textit{Determinantal point processes and 
fermions on complex manifolds: bulk universality}. arXiv:0811.3341v1.

\bibitem{berry} A. C. Berry, \textit{The Accuracy of the Gaussian 
Approximation to the Sum of Independent Variates}. Trans. Amer. Math. Soc. 
{\bf49} (1) (1941), 122–136.

\bibitem{bor} A. Borodin, \textit{Determinantal Point Processes}. 
arXiv:0911.1153v1. 

\bibitem{deift} P. A. Deift, \textit{Orthogonal polynomials and random 
matrices: a Riemann-Hilbert approach}. Courant Lecture Notes in Mathematics, 
{\bf3}. New York University, Courant Institute of Mathematical Sciences, 
New York; American Mathematical Society, Providence, RI, 1999. 

\bibitem{deiftgioev} P. Deift, Percy, D. Gioev, \textit{Random matrix theory: 
invariant ensembles and universality}. Courant Lecture Notes in Mathematics, 
{\bf18}. Courant Institute of Mathematical Sciences, New York; American 
Mathematical Society, Providence, RI, 2009. 

\bibitem{esseen} 
C.-G. Esséen, \textit{On the Liapunoff limit of error in the theory 
of probability}, Ark. Mat. Astr. Fys. {\bf A28} (1942), no. 9, 1--19.


\bibitem{HedMak1} H. Hedenmalm, N. Makarov, \textit{Quantum Hele-Shaw flow}.
arXiv: math/0411437.

\bibitem{HedMak2} H. Hedenmalm, N. Makarov, \textit{Coulomb gas ensembles and
Laplacian growth}. Submitted.

\bibitem{HedShi} H. Hedenmalm, S. Shimorin, \textit{Hele-Shaw flow on 
hyperbolic surfaces}. J. Math. Pures Appl. (9) {\bf81} (2002), no. 3, 
187--222.

\bibitem{hkpv} J.B. Hough, M.  Krishnapur, Y. Peres, B. Vir\'{a}g, 
\textit{Determinantal processes and independence}. Probab. Surv. {\bf3} 
(2006), 206--229. 

\bibitem{orthopo} 
T. H. Koornwinder, R. S. Wong, R. Koekoek, R. F. Swarttouw, 
\textit{Orthogonal polynomials}. NIST handbook of mathematical functions, 
435--484, U. S. Dept. of Commerce, Washington, DC, 2010.

\bibitem{macchi} O. Macchi, \textit{The coincidence approach to stochastic 
point processes}. Advances in Appl. Probability {\bf7} (1975), 83--122.
 
\bibitem{mehta} M. L. Mehta, \textit{Random matrices}.
Third edition. Pure and Applied Mathematics (Amsterdam), {\bf142}. 
Elsevier/Academic Press, Amsterdam, 2004. 

\bibitem{Mou} Z. Mouayn, \textit{New formulae representing magnetic Berezin
transforms as functions of the Laplacian on $\C^N$}. arXiv:1101.0379

\bibitem{rv} B. Rider, B. Vir\'{a}g, \textit{The noise in the circular law 
and the Gaussian free field}. Int. Mat. Res. Not. IMRN (2007), no. 2, Art. ID 
mm006, 33 pp.
  
\bibitem{st} E. Saff, V. Totik, \textit{Logarithmic potentials with external 
fields}. Grundlehren der mathematischen Wissenschaften {\bf316}, 
Springer-Verlag, Berlin Heidelberg, 1997. 

\bibitem{tshi} T. Shirai, \textit{Ginibre-type determinantal processes}.
Source file: 
http://argent.shinshu-u.ac.jp/lab/math/misc/files/salsis2009/shirai.pdf

\bibitem{szego} 
G. Szeg\"o, \textit{\"Uber eine Eigenschaft der Exponentialreihe}, Sitzungber. 
Berlin Math. Gesellschaftwiss. {\bf23} (1923), 50-64.

\bibitem{zabro} A. Zabrodin, \textit{Random matrices and Laplacian 
growth}, arXiv:0907.4929v1 [math-ph].

\bibitem{vas} N.L. Vasilevski, \textit{Poly-Fock Spaces}, Operator Theory. 
Advances and Applications, v. 117 (2000), p. 371-386.

\end{thebibliography}
\end{document}